\documentclass{amsart}
\usepackage{amsmath}
\usepackage{amssymb}
\usepackage{amsfonts}
\usepackage{mathtools}
\usepackage{graphicx}
\usepackage{subfigure}
\usepackage{float}
\usepackage{bbm}
\usepackage{amsthm}
\usepackage{hyperref}
\usepackage{cleveref}
\usepackage{tikz}
\usepackage{tikz-3dplot}
\usepackage{mathrsfs}
\usepackage{bm}
\usepackage{esint}
\usepackage{appendix}
\usepackage{xfrac}

\numberwithin{table}{section}
\numberwithin{equation}{section}

\newtheorem{theorem}[equation]{Theorem}
\newtheorem{corollary}[equation]{Corollary}  
\newtheorem{lemma}[equation]{Lemma} 
\newtheorem{proposition} [equation]{Proposition}

\theoremstyle{definition}
\newtheorem{definition}[equation]{Definition}

\theoremstyle{remark}
\newtheorem{remark}[equation]{Remark}

\title{Metric Degenerations and Satake Compactification of Enriques Surfaces}

\author{Zexuan Ouyang}
\address{Westlake Institute for Advanced Study, Westlake University, Hangzhou 310030, China}
\email{ouyangzexuan@westlake.edu.cn}

\date{}

\newcommand{\IC}{{\mathbb C}}

\newcommand{\IP}{{\mathbb P}} 
\newcommand{\IQ}{{\mathbb Q}} 
\newcommand{\IR}{{\mathbb R}}

\newcommand{\IZ}{{\mathbb Z}}

\newcommand{\CM}{{\mathcal M}}

\newcommand{\FM}{{\mathfrak M}}

\newcommand{\en}{_{\mathrm {En}}}

\begin{document}
	
	\bibliographystyle{plain}

	\begin{abstract}
		We determine the Gromov–Hausdorff compactification of the moduli space of unit-diameter Ricci-flat Kähler metrics on Enriques surfaces.
		We show that all metric degenerations are encoded by the adjoint Satake compactification of the Enriques period domain.
		There are $32$ boundary components added in this Satake compactification, and the GH limit at each boundary is determined. As a corollary, all the GH limits of K\"ahler Ricci-flat metrics on Enriques surfaces are classified, including those for a fixed complex structure or a fixed polarization.
	\end{abstract}
	
	\maketitle
	
	\setcounter{tocdepth}{1}
	\tableofcontents
	
	\section{Introduction}
	
	A fundamental problem in the study of Ricci-flat K\"ahler metrics is to understand
	their degenerations and to relate the resulting metric compactification to
	algebraic or arithmetic compactifications of the corresponding moduli spaces.
	The degeneration
	of Ricci-flat metrics has been studied from several complementary points of
	view, including the collapse of Calabi--Yau metrics
	\cite{gross-tosatti-zhang-16,gross-wilson-00}, and compactifications of
	period and moduli spaces \cite{anderson-92,kobayashi-todorov-87}. For K3 surfaces, the relation between metric
	degenerations and arithmetic compactifications has been investigated in
	\cite{odaka-oshima-21,sun-zhang-24,
		oy-collapsingk3surfacesspecial,
		ouyang2025compactificationmetricmodulispace}.
	For Enriques surfaces, the arithmetic geometry of the period space and its
	compactifications has a classical history; see, for instance,
	\cite{sterk,enriques-book}.
	
	In this paper we determine the Gromov--Hausdorff compactification of the
	moduli space of unit-diameter Ricci-flat K\"ahler metrics on Enriques
	surfaces. We show that it is completely encoded by an adjoint Satake
	compactification of the Enriques period domain. The arithmetic boundary
	consists of exactly $32$ components, and the Gromov--Hausdorff limit
	associated with every boundary component is determined explicitly.
	
	The problem is intrinsically equivariant. Every Enriques surface $Y$ has a
	K3 universal cover
	\[
	p:X\longrightarrow Y
	\]
	with a fixed-point-free holomorphic involution $\sigma$, and every Ricci-flat
	K\"ahler metric on $Y$ lifts to a $\sigma$-invariant hyperk\"ahler metric on
	$X$. Under degeneration, however, the limiting involution on the K3
	Gromov--Hausdorff limit need not remain free. Fixed points or
	positive-dimensional fixed loci may appear, and different limiting
	involutions can produce different quotient spaces. Thus the Enriques
	compactification is not determined solely by the underlying K3 limit: the
	limiting involution is an essential part of the boundary geometry.
	
	We now describe the moduli space. Let $\Lambda$ denote the K3 lattice and let
	$\rho$ be the lattice involution induced by the Enriques covering involution.
	The corresponding invariant and anti-invariant lattices have signatures
	$(1,9)$ and $(2,10)$, respectively; see
	\cite{barth-cptcplxsurface-04,enriques-book}. Set
	\[
	G=O(1,9)\times O(2,10),
	\qquad
	G_{\IZ}=\{g\in\operatorname{Aut}(\Lambda):g\rho=\rho g\},
	\]
	and let $K$ be a maximal compact subgroup of $G$. The period space relevant
	to Ricci-flat Enriques metrics is the arithmetic quotient
	\[
	\CM\en=G_{\IZ}\backslash G/K.
	\]
	After removing the root hyperplanes corresponding to singular data, one obtains an open subset
	$\CM\en^0\subset\CM\en$. By the period map and the Calabi--Yau theorem
	\cite{yau-78}, there is a bijection
	\begin{equation}
		\label{Phi-1.1}
		\Phi^0:\CM\en^0\longrightarrow\FM\en^{sm},
	\end{equation}
	where $\FM\en^{sm}$ denotes the moduli space of smooth unit-diameter
	Ricci-flat K\"ahler metrics on Enriques surfaces.
	
	Our main result identifies the metric degeneration with the adjoint Satake
	boundary.
	
	\begin{theorem}\label{thm:main-compactification}
		There exists a continuous geometric realization map
		\[
		\overline{\Phi}:
		\overline{\CM}\en^{\mathrm{Sat,ad}}
		\longrightarrow
		\overline{\FM}\en^{GH},
		\]
		extending the map $\Phi^0$ in \eqref{Phi-1.1}.
		Moreover, $\overline{\CM}\en^{\mathrm{Sat,ad}}$ has exactly $32$
		arithmetic boundary components, and the Gromov--Hausdorff limit
		associated with every boundary component is determined explicitly;
		see Theorems~\ref{thm:isotropic_classes}
		and~\ref{thm-quotient-type}.
	\end{theorem}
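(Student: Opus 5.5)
The plan is to reduce the statement to the K3 case by working equivariantly. We use the known result for K3 surfaces (\cite{odaka-oshima-21,ouyang2025compactificationmetricmodulispace}): the map from the K3 period space $O(\Lambda)\backslash O(3,19)/K'$ to the Gromov--Hausdorff moduli of unit-diameter hyperk\"ahler metrics extends continuously to a (modified) Satake compactification. The boundary strata there are indexed by isotropic sublattices of rank $1$ and $2$. The corresponding limits are, respectively, $S^2$ with McLean metric (the $2$-dimensional collapse), the segment $[0,1]$, and the flat $T^3/\pm$ type limits at the other boundary.

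First we write down the natural map $\CM\en\to\CM_{K3}$ induced by $G\hookrightarrow O(3,19)$, which sends a $\rho$-compatible positive $3$-plane to its K3 period. We then check that this map extends to a continuous map of the adjoint Satake compactifications. The rational parabolics of $G$ for the adjoint representation correspond to pairs $(I_+,I_-)$ of isotropic sublattices in the invariant lattice $U\oplus E_8(-1)$ (up to scaling by $2$) and in the anti-invariant lattice $U\oplus U(2)\oplus E_8(-2)$. Since $\rho$ must commute with the degeneration, these pairs map to isotropic sublattices of $\Lambda$ of the same total rank. Composing with the K3 geometric realization and taking the quotient by the limiting involution then defines $\overline\Phi$.

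The key point is that involutions pass to GH limits. By Arzel\`a--Ascoli, the isometries $\sigma_i$ subconverge to an isometric involution $\sigma_\infty$ of the K3 limit $X_\infty$. Moreover, $X_i/\sigma_i\to X_\infty/\sigma_\infty$ in the GH sense, after rescaling the diameter, which changes by at most a factor of $2$. Continuity of $\overline\Phi$ then reduces to showing that $\sigma_\infty$ depends only on the Satake boundary point and not on the subsequence. For that we identify $\sigma_\infty$ from the lattice data: on $S^2$ or $[0,1]$ it is determined by the action of $\rho$ on the isotropic lattice and on its orthogonal complement.

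Next we count boundary components. This means classifying $G_{\IZ}$-orbits of primitive isotropic vectors and isotropic planes in the two lattices and their compatible combinations. Sterk's classical results \cite{sterk} give $2$ orbits of isotropic lines and $2$ of isotropic planes in the anti-invariant lattice (the Baily--Borel boundary). For the invariant lattice $U\oplus E_8(-1)$, which is unimodular, there is a single isotropic line orbit. We must also account for $G_{\IZ}$ being smaller than the product of the two orthogonal groups: orbits of pairs are counted via the gluing along the discriminant $(\IZ/2)^{10}$. Combining the ranks $(r_+,r_-)$ with $r_+\le1$ and $r_-\le2$ and refining by $2$-divisibility type should give $32$; this is Theorem~\ref{thm:isotropic_classes}.

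The main obstacle is the explicit determination of $X_\infty/\sigma_\infty$ for each of the $32$ classes, which is Theorem~\ref{thm-quotient-type}. Around the boundary we would model the limit by $\sigma$-equivariant elliptic (Lagrangian) fibrations in the $2$-dimensional collapse. There $\sigma_\infty$ acts on the base $S^2$ either by rotation, antipodally, or by reflection, giving $S^2$, $\IR P^2$, or a disk with boundary as quotient. The type is read off from how $\rho$ acts on the fiber class and on the $2$-torsion data. In the $1$-dimensional collapse, $\sigma_\infty$ acts on $[0,1]$ either trivially or by flipping, giving $[0,1]$ or $[0,1/2]$. The difficult part is proving that every candidate involution actually occurs and matches exactly one lattice orbit. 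To do this we would construct explicit Enriques degenerations, such as Horikawa or Kondo models with special elliptic fibrations, and then verify freeness along the sequence.
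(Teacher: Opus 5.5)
Your outline has genuine gaps. The most concrete one is the count of $32$.

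The adjoint Satake boundary of $O(3,19)$ is not indexed only by isotropic sublattices of rank $1$ and $2$. Its strata correspond to isotropic lines, planes, $3$-spaces, and flags $I_1\subset I_3$, and the Enriques strata are their traces. In particular, the $3$-dimensional limits $T^3/\IZ_2$ live over the rank-$3$ strata, which your description omits. Compatible pairs $(I_+,I_-)$ with $r_+\le1$, $r_-\le2$ produce only the $1+2+2+4+6=15$ classes of types $a_1,a_2,d_1,d_2,b$. The other $17$ components are flag strata:
\begin{itemize}
\item $6$ of type $c_1$, namely the flag $W\cap\Lambda^+\subset W$ for each of the six $b$-classes $W$;
\item $11$ of type $c_2$, namely a line of $W\cap\Lambda^-$ inside $W$. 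The number of classes over each $b_i$ has to be computed with Eichler transvections compatible with the gluing, and $b_6$ contributes only one.
\end{itemize}
A $c_1$ flag has the same stabilizer in $G$ as $W$ itself. So these components cannot be detected by enumerating rational parabolics of $G$. They appear only through the Satake stratification, i.e.\ through which weights of $V$ have maximal growth. Your ``should give $32$'' is therefore not supported by your framework.

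The continuity step is also missing its key idea. You reduce it to identifying $\sigma_\infty$ from lattice data, but you give no such identification. You also do not address the $3$-dimensional limits at all. There, $\rho|_{I_{b_4}}$ only yields the linear part $R$, which $\eta_0$ and $\eta_{v_3}$ share even though their quotients differ.

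The paper never identifies $\sigma_\infty$ in order to prove continuity. Instead, the set of subsequential limits at a point is shown to be a single point:
\begin{itemize}
\item It is connected, because every point has a neighborhood basis meeting $\CM\en$ (resp.\ $\CM\en^0$) in connected sets.
\item It is contained in the set of $\IZ_2$-quotients of the fixed K3 limit $X_\infty$. That set is finite, because $\operatorname{Isom}(X_\infty)$ is a compact Lie group (Colding--Naber) in which each conjugacy class of involutions is open among involutions.
\end{itemize}

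For the explicit limits, showing that candidate involutions occur is not the right goal. You must determine which involution occurs on each entire component. This requires that the quotient type be locally constant along each connected boundary component. For $\IP^1$, one must rule out, e.g., free antiholomorphic involutions converging to a reflection. After that, the type is fixed by explicit flat Kummer degenerations of $(E_1\times E_2)/\pm$ with $\sigma(z_1,z_2)=(z_1+\tfrac12\lambda_1,-z_2+\tfrac12\lambda_2)$, together with adjacency between strata. No Kummer example reaches $b_4$, so there one must still exclude $\eta_0$ and the translations via the adjacent strata $c_{2,4a}$ and $c_{1,4}$.
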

	
	The resulting quotient geometries are richer than those of the K3 covers
	themselves. In particular, the topology of a $2$-dimensional Enriques limit
	depends on the limiting involution.
	
	\begin{corollary}\label{cor:all-gh-limits}
		Let $Y$ be an Enriques surface equipped with a family of unit-diameter
		K\"ahler Ricci-flat metrics. The Gromov--Hausdorff limits are classified
		according to their real dimension as follows:
		\begin{itemize}
			\item \textbf{3-dimensional limits:}
			the limit is a $\IZ_2$-quotient of $T^3/\IZ_2$, with the quotient
			action explicitly determined by the corresponding Satake boundary
			component;
			
			\item \textbf{2-dimensional limits:}
			the limit is homeomorphic to
			\[
			\IP^1,\qquad \IR\IP^2,\qquad\text{or}\qquad D^2;
			\]
			
			\item \textbf{1-dimensional limits:}
			the limit is the unit interval $I^1$.
		\end{itemize}
	\end{corollary}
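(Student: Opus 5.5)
The plan is to derive Corollary~\ref{cor:all-gh-limits} from Theorem~\ref{thm:main-compactification} together with the K3 classification of Gromov--Hausdorff limits. First we lift. Any family of unit-diameter Ricci-flat Kähler metrics $g_i$ on $Y$ lifts to $\sigma$-invariant hyperkähler metrics $\tilde g_i$ on the K3 cover $X$, whose diameters are bounded between $1$ and $2$. After rescaling and passing to a subsequence, Gromov compactness gives equivariant convergence $(X,\tilde g_i,\sigma)\to(X_\infty,\sigma_\infty)$. Since the covering map is a local isometry with deck group $\IZ_2$, the quotient limit is $Y_\infty=X_\infty/\sigma_\infty$ (standard equivariant GH convergence, Fukaya--Yamaguchi). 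Consequently $\dim Y_\infty=\dim X_\infty$. If the limit is non-collapsed, it is a point of $\FM\en^{sm}$ or an orbifold interior point. Otherwise the periods leave every compact subset of $\CM\en$, and by continuity of $\overline\Phi$ and compactness of $\overline{\CM}\en^{\mathrm{Sat,ad}}$ the limit equals $\overline\Phi(b)$ for some boundary point $b$, lying in one of the $32$ components.

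Next we run through the dimensions of the K3 limit $X_\infty$, which by the K3 results (Odaka--Oshima, Sun--Zhang) is $T^3/\IZ_2$, a $2$-sphere with a special Kähler metric, or an interval. We take each case in turn.

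\textbf{Dimension 3.} Here $X_\infty=T^3/\IZ_2$, and $\sigma_\infty$ is an isometric involution on it. By Theorem~\ref{thm-quotient-type} this involution is the one determined by the isotropic sublattice or boundary component. This gives the first bullet.

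\textbf{Dimension 1.} Here $X_\infty$ is a segment and $\sigma_\infty$ is either the identity or the reflection. In both cases the quotient is a segment, which after normalization is $I^1$.

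\textbf{Dimension 2.} Here $X_\infty\cong S^2$ and $\sigma_\infty$ is an isometric involution of the limiting metric, extending to a homeomorphism of $S^2$. By Kerékjártó's theorem, a topological involution of $S^2$ is conjugate to one of three models: the identity, the antipodal map, or a rotation by $\pi$ or a reflection. The corresponding quotients are $\IP^1\cong S^2$ (identity, or the rotation with two fixed points, whose quotient is again a sphere), $\IR\IP^2$ (antipodal map), and $D^2$ (reflection across a circle). The boundary data of Theorem~\ref{thm:isotropic_classes} and Theorem~\ref{thm-quotient-type} show which of these occur, and conversely that each of the three is realized.

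The main obstacle is the $2$-dimensional case, specifically checking that $\sigma_\infty$ really is one of these topological involutions and that every listed type is attained. The tool I would use is the limiting special Lagrangian fibration: $\sigma$ is antiholomorphic or holomorphic on the relevant hyperkähler rotation, so it descends to the base $\IP^1$. Its action on the base can then be read off from the action of $\rho$ on the isotropic vector defining the boundary component. Finally, for the statements about a fixed complex structure or a fixed polarization, I would restrict to the sub-boundary that is hit by one-parameter degenerations in the Kähler cone. This selects a subset of the same list, so the classification is unchanged.
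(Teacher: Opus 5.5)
Your argument is correct, and its skeleton matches the paper's. You lift to the K3 cover and identify the limit as $X_\infty/\sigma_\infty$ (Lemma~\ref{lem:limit_quotient}). You use compactness of the Satake compactification and continuity of $\overline{\Phi}$ to place every collapsed limit over a boundary point. You quote Theorem~\ref{thm-quotient-type} for the three-dimensional case.

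Where you genuinely diverge is in dimensions $1$ and $2$. The paper reads these off component by component from Theorem~\ref{thm-quotient-type}. That theorem rests on:
\begin{itemize}
\item the conformal classification in Lemma~\ref{lem:involution_types}, via the fact that the generalized K\"ahler--Einstein metric fixes the complex structure of $\IP^1$ up to conjugation;
\item the Kummer degenerations of Section~\ref{section-kummer-example};
\item the constancy argument of Proposition~\ref{prop:constant_topo_type};
\item an adjacency argument for the $\CM(c_{i,j})$.
\end{itemize}
You instead classify isometric involutions of a segment directly. You then apply Ker\'ekj\'art\'o's theorem to $\sigma_\infty$, viewed only as a homeomorphism of $S^2$. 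There are four models, not three as you wrote; the identity and the $\pi$-rotation both give $S^2$.

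Your route gives the homeomorphism-type upper bound uniformly over every boundary stratum, including the $c$-type ones, without needing $\sigma_\infty$ to be conformal or anticonformal. The paper's route gives finer information: which type lies over which component ($\CM(a_{1,1})\to\IP^1$, $\CM(a_{2,1})\to\IR\IP^2$, $\CM(a_{2,2})\to D^2$), and it excludes $\sigma_\infty=\mathrm{Id}$.

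Your topological argument does not show that all three types actually occur. For that you still need Theorem~\ref{thm-quotient-type} or the explicit Kummer examples. Your proposed substitute, reading the base involution off the action of $\rho$ on the isotropic vector, only separates holomorphic from antiholomorphic base involutions (isotropic vector in $\Lambda^+$ versus $\Lambda^-$). Since $\rho$ acts by $-1$ on both $e_1^-$ and $e_2^-$, it cannot tell the free involution ($\IR\IP^2$) from the reflection ($D^2$). The paper distinguishes these only through the Kummer examples combined with Proposition~\ref{prop:constant_topo_type}.
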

	
	The global compactification also gives a classification of degenerations
	under natural geometric constraints.
	
	\begin{corollary}\label{coro-fix-classes}
		The Gromov--Hausdorff limits of K\"ahler Ricci-flat metrics on Enriques
		surfaces under the following restrictions are classified as follows:
		\begin{itemize}
			\item \textbf{Fixed complex structure:}
			let $(Y,J)$ be fixed and vary the K\"ahler class. The only possible
			collapsing limit is $\IP^1$, equipped with the generalized
			K\"ahler--Einstein metric associated with an elliptic fibration
			\[
			f:Y\longrightarrow\IP^1.
			\]
			Conversely, every such metric is realized as a
			Gromov--Hausdorff limit.
			
			\item \textbf{Fixed polarized K\"ahler class:}
			if the polarized K\"ahler class is fixed and the complex structure
			varies, the only possible collapsing limit spaces are
			\[
			\IR\IP^2,\qquad D^2,\qquad I^1.
			\]
		\end{itemize}
	\end{corollary}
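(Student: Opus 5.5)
We deduce Corollary~\ref{coro-fix-classes} from Theorem~\ref{thm:main-compactification} by restricting the realization map $\overline{\Phi}$ to the closures of the relevant sub-loci in $\CM\en$. We then read off which of the $32$ boundary components these closures can meet, using the explicit description of the limits in Theorems~\ref{thm:isotropic_classes} and~\ref{thm-quotient-type}.

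A point of $\CM\en=G_{\IZ}\backslash G/K$ is a pair consisting of a positive line in the invariant lattice $\Lambda^{\rho}\otimes\IR$ of signature $(1,9)$, namely the K\"ahler class, and a positive $2$-plane in $\Lambda^{-\rho}\otimes\IR$ of signature $(2,10)$, namely the period. Fixing $(Y,J)$ fixes the second factor, so the closure lies in $\overline{\mathbb{H}^9}$ modulo an arithmetic group. Fixing the polarized K\"ahler class fixes the first factor, so the closure lies in the Baily--Borel type closure of the type IV domain for $O(2,10)$. The adjoint Satake boundary of $G$ is built from rational isotropic subspaces in each factor separately. So in the first case only boundary components involving an isotropic vector of $\Lambda^{\rho}$ (the $O(1,9)$ cusps) can be approached. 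In the second case only components involving isotropic lines or planes in $\Lambda^{-\rho}$ can be approached.

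\textbf{Fixed complex structure.} The K\"ahler class degenerates to a primitive isotropic nef class $e\in\mathrm{NS}(Y)$. For an Enriques surface such a class is $2$ times a half-fiber of an elliptic fibration $f:Y\to\IP^1$. I would identify this with the $O(1,9)$-type boundary components in Theorem~\ref{thm:isotropic_classes}. Theorem~\ref{thm-quotient-type} then gives a $2$-dimensional limit. On the K3 cover the limit is the base $\IP^1$ of the induced fibration, with the generalized K\"ahler--Einstein metric of Gross--Tosatti--Zhang, and the involution acts holomorphically on this base. Here I need that the induced action on $\IP^1$ of the cover is not orientation-reversing: the anti-holomorphic case would give $\IR\IP^2$ or $D^2$, and those arise only from the period factor. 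Then the quotient is again $\IP^1$, namely the base of $f$, carrying the generalized KE metric on $Y$'s base, with the orbifold data coming from the two double fibers. The converse holds because $\overline{\Phi}$ is continuous and surjective onto the boundary stratum: every elliptic fibration of $Y$ gives a nef isotropic class, and scaling the K\"ahler class toward it realizes the limit. The remaining possibility, degeneration to higher-rank cusps, is excluded because $O(1,9)$ has real rank one.

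\textbf{Fixed polarization.} Here only the $O(2,10)$ cusps occur. Isotropic lines correspond to the $3$-dimensional limits? No: with the K\"ahler class fixed, the hyperk\"ahler $2$-sphere collapses along the directions of the period. I would check in Theorems~\ref{thm:isotropic_classes} and~\ref{thm-quotient-type} that the components attached to isotropic lines give $2$-dimensional limits on which the involution acts anti-holomorphically with respect to the limiting complex structure. These quotients are $\IR\IP^2$ or $D^2$, according to whether the fixed locus is empty or a circle. The components attached to isotropic planes give $1$-dimensional limits, namely $I^1$ from Corollary~\ref{cor:all-gh-limits}. The $3$-dimensional limits $T^3/\IZ_2$ require the $O(1,9)$ factor to degenerate as well, which is excluded in this case, and $\IP^1$ requires the K\"ahler factor to degenerate.

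The main obstacle is the bookkeeping in the second step: matching each of the $32$ components to the factor whose isotropic data it uses, and confirming that the fixed-polarization closure actually reaches both the $\IR\IP^2$-type and the $D^2$-type components. To settle the latter, I would exhibit explicit isotropic vectors in $\Lambda^{-\rho}\cong U\oplus U(2)\oplus E_8(2)$ orthogonal to a fixed polarization, in both the $U$ and the $U(2)$ summands, since these summands give the two inequivalent cusp types.
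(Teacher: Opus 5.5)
Your proposal is correct and follows essentially the paper's own argument. Only one factor of $G$ can diverge, so by Theorem~\ref{thm:isotropic_classes} the fixed-$J$ closure meets only $\CM(a_{1,1})$ and the fixed-$\kappa$ closure meets only $\CM(a_{2,1})\cup\CM(a_{2,2})\cup\CM(d_{1,1})\cup\CM(d_{1,2})$; Theorem~\ref{thm-quotient-type} then gives the limits, and the converse comes from $\sigma$-equivariant Gross--Tosatti--Zhang collapsing along the elliptic fibration (the paper checks directly, via $\sigma^*\omega_X=-\omega_X$, that the induced involution of the base is nontrivial, where you appeal to the two double fibers).

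A few harmless slips: a primitive isotropic nef class in $\mathrm{Num}(Y)$ is the half-fiber itself, not twice it. The relevant compactification of the $O(2,10)$ factor is the adjoint Satake one, not Baily--Borel: there isotropic lines give $9$-dimensional boundary components and isotropic planes give points. Finally, since $\kappa\in\Lambda^+\perp\Lambda^-$, your worry about isotropic vectors orthogonal to the polarization is vacuous, and realization is not needed for an ``only possible'' statement.
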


	This can be read directly from the global compactification:
	fixing the complex structure allows only the $O(1,9)$ factor of the period
	domain to diverge, whereas fixing the polarized K\"ahler class allows only
	the $O(2,10)$ factor to diverge. The arithmetic classification of the
	corresponding boundary strata, together with
	Theorem~\ref{thm-quotient-type}, then determines the possible metric limits.
	
	The present work is closely related to the compactification theory for K3
	surfaces developed in
	\cite{ouyang2025compactificationmetricmodulispace}. The K3 theory supplies
	the underlying Gromov--Hausdorff limits of the universal covers and the
	ambient Satake compactification. The new issue for Enriques surfaces is to
	retain the covering involution throughout the degeneration. This introduces
	both an arithmetic refinement---the classification of the $32$ boundary
	orbits compatible with the Enriques lattice decomposition---and an
	equivariant geometric refinement---the determination of the limiting
	involution and its quotient on every boundary stratum. Together these yield
	the complete metric compactification described above.
	
	The paper is organized as follows.
	Section~2 recalls the period description of Enriques surfaces, the invariant
	and anti-invariant lattices, and the Ricci-flat metric realization.
	Section~3 constructs the partial metric compactification and proves
	continuity of the geometric realization across the non-collapsing locus.
	Section~4 constructs the Satake compactification and carries out the
	arithmetic classification of its $32$ boundary components.
	Section~5 studies the limiting $\IZ_2$-actions on the corresponding K3
	Gromov--Hausdorff limits and determines the quotient geometry on every
	boundary component.
	
	\medskip
	\noindent
	{\bf Acknowledgements.}
	The author gratefully acknowledges Professor Gang Tian for his guidance and
	support. The author is supported by the National Key R\&D Program of China
	No.~2023YFA1009900.

	\section{Preliminaries}
	
	\subsection{Basic Properties of Enriques surfaces}
	An \emph{Enriques surface} $Y$ is a minimal smooth projective complex surface with
	\[ q(Y) = 0,  \quad 2K_Y \sim 0 \quad (K_Y \not\sim 0). \]
	We briefly list some of its fundamental properties here; see \cite{barth-cptcplxsurface-04,enriques-book} for a detailed treatment.
	
	All Enriques surfaces are deformation equivalent. For any Enriques surface $Y$, one has $\pi_1(Y) \cong \mathbb{Z}/2\mathbb{Z}$, and the universal cover is a K3 surface.
	Conversely, if a K3 surface $X$ admits a fixed-point-free holomorphic involution $\sigma$, then $X / \langle \sigma \rangle$ is an Enriques surface \cite[p.~358]{barth-cptcplxsurface-04}.
	Moreover, $\sigma$ acts anti-invariantly on the nowhere-vanishing holomorphic 2-form $\omega_X$:
	\[ \sigma^*\omega_X = -\omega_X.
	\]
	
	The integral cohomology of $Y$ has 2-torsion:
	\[ H^2(Y, \mathbb{Z}) \cong \mathbb{Z}^{10} \oplus \mathbb{Z}/2\mathbb{Z}.
	\]
	The torsion-free part $H^2(Y,\mathbb{Z})_f$, with the intersection pairing, is the even unimodular lattice of rank $10$ and signature $(1,9)$:
	\[ H^2(Y,\mathbb{Z})_f\cong U \oplus E_8(-1).
	\]
	
	Let $\sigma $ be the deck involution on the K3 cover $X$, and let
	$\Lambda\cong U^{\oplus3}\oplus E_8(-1)^{\oplus2}$ be the K3 lattice.
	The covering involution on $\Lambda$ is given by 
	\[
	\rho(z_1\oplus z_2\oplus z_3\oplus x\oplus y)
	= (-z_1)\oplus z_3\oplus z_2\oplus y\oplus x.
	\]
	More precisely, there exists a marking $\phi:H^2(X,\mathbb Z)\to\Lambda$ such that $\phi\circ\sigma^*=\rho\circ\phi$ \cite[Chapter VIII, Lemma 19.1]{barth-cptcplxsurface-04}.
	
	The covering involution induces the orthogonal decomposition $\Lambda_\IR=\Lambda_\IR^+\oplus\Lambda_\IR^-$, where
	\begin{itemize}
		\item $\Lambda^+\cong U(2)\oplus E_8(-2)$ has signature $(1,9)$;
		\item $\Lambda^-=(\Lambda^+)^\perp\cong U\oplus U(2)\oplus E_8(-2)$ has signature $(2,10)$.
	\end{itemize}
	
	\subsection{K\"ahler--Einstein Metrics on Enriques Surfaces}
	
	Set $\Lambda^-_{\mathbb C}=\Lambda^-\otimes_{\mathbb Z}\mathbb C$ and $\Lambda^-_{\mathbb R}=\Lambda^-\otimes_{\mathbb Z}\mathbb R$.
	Since $\sigma^*\omega_X=-\omega_X$, the period of the K3 cover $\phi([\omega_X])$ lies in
	\begin{equation*}
		\Omega_{\mathrm{En}} = \left\{ [\omega] \in \mathbb{P}(\Lambda^- _\mathbb{C}) \mid \langle \omega, \omega \rangle = 0, \, \langle \omega, \bar{\omega} \rangle > 0 \right\}.
	\end{equation*}
	This domain parametrizes oriented positive-definite $2$-planes in $\Lambda^-_{\mathbb R}$ and is isomorphic to $O(2,10)/(O(2)\times O(10))$.
	Define
	\begin{equation*}
		K\Omega_{\mathrm{En}} = \left\{ (\kappa, [\omega]) \mid \kappa \in \Lambda^+_\mathbb{R} \text{ with } (\kappa,\kappa)=1, \, [\omega] \in \Omega_{\mathrm{En}} \right\}.
	\end{equation*}
	
	Define $\Delta$ to be the set of $(-2)$-roots in $\Lambda$.
	The smooth locus is obtained by removing the hypersurfaces orthogonal to $(-2)$-roots:
	\begin{equation*}
		K\Omega_{\mathrm{En}}^0 =K\Omega_{\mathrm{En}} \setminus (H^-\cup H).
	\end{equation*}
	Here $H^-=\bigcup_{[\delta]\in \Delta \cap \Lambda^-}\{(\kappa,[\omega])\mid [\omega]\in\delta^\perp\}$ and $H=\bigcup_{\delta\in \Delta}\{(\kappa,[\omega])\mid [\omega],\kappa\in\delta^\perp\}$.
	
	\begin{definition}
		A \emph{marked K\"ahler Enriques surface} is a triple $(Y,\phi,\kappa)$, where $Y$ is an Enriques surface, $\kappa$ is a K\"ahler class on $Y$, and $\phi:H^2(X,\mathbb Z)\to\Lambda$ is a marking satisfying
		$\phi\circ\sigma^*=\rho\circ\phi$, where $\sigma$ is the deck involution on the K3 cover $X$.
	\end{definition}

	\begin{lemma}
		For every $(\kappa,[\omega])\in K\Omega_{\mathrm{En}}^0$, there is a unique (unit volume) marked Enriques pair  $(Y,\phi,\kappa_Y)$ with $p:X\to Y$ the K3 cover, such that $\phi(p^*\kappa_Y)=\kappa$ and $ \phi([\omega_X])=[\omega]$ for a holomorphic volume form $\omega_X$.
	\end{lemma}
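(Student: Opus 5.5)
The plan is to reduce the statement to the global Torelli theorem and the surjectivity of the period map for K3 surfaces, including its Kähler refinement, and then descend to the Enriques quotient.

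\emph{Step 1 (existence of the K3).} Given $(\kappa,[\omega])\in K\Omega^0_{\mathrm{En}}$, first note that $\kappa\perp\omega$, because $\kappa\in\Lambda^+_\IR$ and $\omega\in\Lambda^-_\IC$. Also $(\kappa,\kappa)=1>0$. The condition $(\kappa,[\omega])\notin H$ says that no $(-2)$-root $\delta$ is orthogonal to both $\kappa$ and $\omega$. By surjectivity of the period map for Kähler K3 surfaces (Todorov, Looijenga, Siu), there is a marked K3 surface $(X,\phi)$ with $\phi([\omega_X])=[\omega]$ such that $\phi^{-1}(\kappa)$ lies in the Kähler cone of $X$. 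The Kähler cone is the component of the positive cone cut out by the effective $(-2)$-classes. Since no root in $\phi(H^{1,1})=\omega^\perp\cap\Lambda$ is orthogonal to $\kappa$, the class $\kappa$ lies in an open chamber. Composing $\phi$ with reflections in roots and with $\pm1$, we could move that chamber to the Kähler cone. However, this would break the compatibility with $\rho$, so I plan to avoid it, as explained in Step 2.

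\emph{Step 2 (the involution).} Consider the isometry $\psi=\phi^{-1}\circ\rho\circ\phi$ of $H^2(X,\IZ)$. It sends $[\omega_X]$ to $-[\omega_X]$, so it is a Hodge isometry, and it fixes $\phi^{-1}(\kappa)$. If $\phi^{-1}(\kappa)$ is Kähler, then $\psi$ is effective. The strong Torelli theorem then gives a unique automorphism $\sigma$ with $\sigma^*=\psi$. The key point is to choose $\phi$ so that $\phi^{-1}(\kappa)$ really is Kähler. I will use the surjectivity statement in its strong form: for \emph{any} $\kappa$ with $(\kappa,\kappa)>0$ and $\kappa\perp\omega$ that is not orthogonal to any root in $\omega^\perp$, there exists a marked K3 surface with period $\omega$ on which $\kappa$ is Kähler. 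This statement is proved via twistor lines and the $(\kappa,\omega)$ period domain, and it keeps the given marking, hence keeps $\rho$.

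\emph{Step 3 (freeness).} It remains to show that $\sigma$ is fixed-point-free. I expect this to be the main obstacle. Since $\sigma^*\omega_X=-\omega_X$, the involution $\sigma$ is non-symplectic, so its fixed locus is a disjoint union of smooth curves. By Nikulin's classification, the fixed locus is determined by the invariant lattice $(H^2)^{\sigma}\cong\Lambda^+\cong U(2)\oplus E_8(-2)$. This lattice is $2$-elementary with invariants $(r,a,\delta)=(10,10,0)$, and exactly this case gives an empty fixed locus. Alternatively, one can use the Lefschetz fixed point formula: $\chi(\mathrm{Fix})=2+\operatorname{tr}\sigma^*|_{H^2}=2+(10-12)=0$. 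The holomorphic Lefschetz formula then rules out a fixed locus consisting of curves with total Euler characteristic zero, such as elliptic curves. So I will cite Nikulin here. Once freeness is established, $Y=X/\sigma$ is an Enriques surface. The class $\kappa$ is $\sigma$-invariant, so it descends to a Kähler class $\kappa_Y$ with $p^*\kappa_Y=\phi^{-1}(\kappa)$, and the condition $(\kappa,\kappa)=1$ fixes the volume normalization.

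\emph{Step 4 (uniqueness).} Suppose $(Y',\phi',\kappa_{Y'})$ has the same data. Then $\phi'^{-1}\circ\phi$ is a Hodge isometry preserving Kähler classes, so by strong Torelli it is induced by a unique isomorphism $f:X'\to X$. This $f$ intertwines the two involutions, because their actions on cohomology agree and an automorphism acting trivially on $H^2$ is the identity. Hence $f$ descends to an isomorphism $Y'\to Y$ of marked Kähler Enriques surfaces. Finally, the condition $\notin H^-$ is not needed for existence. It only guarantees that $Y$ carries no $(-2)$-curve coming from $\Lambda^-$. Such curves are excluded consistently, since $\Lambda^-$ classes cannot be $\sigma$-invariant effective classes; this matters for the later bijection \eqref{Phi-1.1}.
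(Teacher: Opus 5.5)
Your proposal is correct and follows the paper's proof: surjectivity of the period map for marked K\"ahler K3 surfaces, effectiveness of $\phi^{-1}\circ\rho\circ\phi$ because it fixes the K\"ahler class $\phi^{-1}(\kappa)$, strong Torelli, freeness, and descent of $\kappa$. You add an explicit uniqueness argument and justify freeness by Nikulin's $(r,a,\delta)=(10,10,0)$ criterion, where the paper cites BHPV. Your Step~1 worry is unfounded: composing a marking with Weyl reflections and $\pm1$ does not change the period, and $\rho$ is transported only through the final marking. Also, the condition $\notin H^-$ is subsumed by $\notin H$, because $\kappa\in\Lambda^+_{\IR}$ is orthogonal to every root of $\Lambda^-$.

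Do drop the holomorphic Lefschetz ``alternative''. On a K3 each fixed curve $C$ contributes $-\tfrac14 K_X\cdot C=0$, while the left side is $1-1=0$, so the formula gives no information. Moreover, non-symplectic involutions of type $(10,8,0)$, which fix two elliptic curves, have the same trace $-2$ on $H^2$. So Lefschetz counts alone cannot give freeness.
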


	\begin{proof}
		
		Let $(\kappa,[\omega])\in K\Omega_{\mathrm{En}}^0$. By surjectivity of the K3 period map \cite[Ch. VIII, Thm. 14.1]{barth-cptcplxsurface-04}, there is a unique marked K3 surface $(X,\phi)$ with period $[\omega]$ and K\"ahler class $\phi^{-1}(\kappa)$.
		The composition $j = \phi^{-1} \circ \rho \circ \phi$ defines a Hodge isometry of $H^2(X, \mathbb{Z})$.
		Because $\kappa\in\Lambda^+_{\mathbb R}$, $j$ preserves the K\"ahler class and is effective. Strong Torelli \cite[Ch. VIII, \S 11]{barth-cptcplxsurface-04} therefore realizes $j$ by a unique involution $\sigma:X\to X$. The condition $(\kappa,[\omega])\notin H^-$ makes $\sigma$ fixed-point-free \cite[Thm. 21.4]{barth-cptcplxsurface-04}, so $Y=X/\langle\sigma\rangle$ is an Enriques surface.
		Since $\kappa$ is $\sigma$-invariant, it descends to a K\"ahler class $\kappa_Y$ on $Y$ satisfying $\phi(p^*\kappa_Y)=\kappa$.
	\end{proof}

	Let $
	G\coloneqq O(1,9)\times O(2,10) $ and let $
	K\coloneqq O(9)\times O(2)\times O(10)$ be its maximal compact subgroup,
	and let
	\[
	G_\IZ\coloneqq
	\operatorname{Aut}(\Lambda)\cap G
	=
	\left\{g\in\operatorname{Aut}(\Lambda)\mid g\rho=\rho g\right\}.
	\]
	Forgetting the marking and taking the quotient gives
	\[ \mathcal{M}_{\mathrm{En}} \coloneqq G_\IZ \backslash K\Omega_{\mathrm{En}}
	\cong G_\IZ \backslash G/K,
	\]
	and we define $\mathcal{M}_{\mathrm{En}}^0 \coloneqq G_\IZ \backslash K\Omega_{\mathrm{En}}^0$. Combined with the Calabi--Yau theorem \cite{yau-78}, the lemma above defines a surjective geometric realization map after rescaling:
	\[ \Phi^0 : \mathcal{M}_{\mathrm{En}}^0 \to \mathfrak{M}_{\mathrm{En}}^{sm}, \]
	where $\mathfrak{M}_{\mathrm{En}}^{sm}$ denotes the set of isometry classes of all smooth Ricci-flat metrics on Enriques surfaces of unit diameter.
	
	\begin{remark}
		From the construction, $\Phi^0$ is surjective. It is in fact bijective: For a smooth hyperk\"ahler metric on the K3 cover $X$, the $3$-plane spanned by the hyperk\"ahler triple is exactly the space of self-dual $2$-forms. Its $\sigma$-invariant line gives the K\"ahler form, and the $(-1)$-eigenspace gives $[\omega]$.
		
	\end{remark}
	\begin{remark}
		By a result of Hitchin \cite[\S 2]{hitchin-74}, every scalar-flat K\"ahler metric on a compact complex surface with torsion canonical bundle is Ricci-flat. Thus the construction also classifies scalar-flat K\"ahler metrics on Enriques surfaces.
	\end{remark}
	
	\section{Partial Compactification of Moduli space}
	
	The aim of this section is to prove the following extension theorem.
	\begin{theorem}\label{thm:continuous-non-collapsing}
		The map $\Phi^0$ extends continuously to a surjection
		\[ \Phi : \mathcal{M}_{\mathrm{En}} \to \mathfrak{M}_{\mathrm{En}}, \]
		where $\FM\en$ is obtained from $\FM\en^{sm}$ by adding all its non-collapsing limits.
	\end{theorem}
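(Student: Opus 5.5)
We work throughout on the $\sigma$-equivariant K3 side and descend to quotients at the end.

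\textbf{Step 1: extension to K3 orbifold metrics.} Take $(\kappa,[\omega])\in K\Omega_{\mathrm{En}}\setminus K\Omega^0_{\mathrm{En}}$. The hyperk\"ahler 3-plane $P=\langle\kappa,\operatorname{Re}\omega,\operatorname{Im}\omega\rangle\subset\Lambda_\IR$ is still positive definite. The K3 period map for hyperk\"ahler orbifold metrics (Kobayashi--Todorov \cite{kobayashi-todorov-87}, Anderson \cite{anderson-92}) associates to $P$ a unique unit-volume hyperk\"ahler orbifold $X_P$. It has ADE singularities whose root system is $\Delta\cap P^\perp$. This assignment is continuous in the Gromov--Hausdorff topology on the full K3 period domain $O(3,19)/(O(3)\times O(19))$. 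We take this ambient continuity as known from the K3 theory in \cite{ouyang2025compactificationmetricmodulispace}.

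\textbf{Step 2: the limiting involution.} Since $\rho$ preserves $P$, with $\rho|_P=\mathrm{diag}(1,-1,-1)$, it permutes $\Delta\cap P^\perp$. Take smooth points $(\kappa_i,[\omega_i])\in K\Omega^0_{\mathrm{En}}$ converging to $(\kappa,[\omega])$. The isometric involutions $\sigma_i$ on $X_{P_i}$ then subconverge, by Arzel\`a--Ascoli in the equivariant GH sense of Fukaya, to an isometric involution $\sigma_\infty$ of $X_P$. We must show $\sigma_\infty$ does not depend on the subsequence or on the approximating sequence. For this, $\sigma_\infty$ acts on the orbifold cohomology, and the action is identified with $\rho$ via the continuity of markings on the regular part. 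The orbifold Torelli theorem then gives uniqueness: an isometry of $X_P$ is determined by its action on $H^2$ together with the 3-plane. We define $\Phi(\kappa,[\omega])=(X_P/\sigma_\infty,\ \text{rescaled to unit diameter})$. Since $\mathrm{diam}$ is continuous under GH convergence and bounded below in the non-collapsed setting, the equivariant convergence $(X_{P_i},\sigma_i)\to(X_P,\sigma_\infty)$ gives GH convergence of the quotients. This proves continuity on $K\Omega_{\mathrm{En}}$.

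\textbf{Step 3: descent to $\CM\en$.} The map is $G_\IZ$-invariant because $g\in G_\IZ$ commutes with $\rho$ and so induces an isometry $X_P\to X_{gP}$ intertwining the involutions. Continuity on the quotient follows because $G_\IZ$ acts properly.

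\textbf{Step 4: surjectivity.} Let $Y_i$ be smooth unit-diameter Enriques metrics converging in GH sense to a non-collapsed limit $Z$. The volume is bounded below, and the covers $X_i$ have doubled volume and diameter at most $2$, so they are non-collapsed as well. By Anderson/Bando--Kasue--Nakajima, the covers subconverge equivariantly to a hyperk\"ahler orbifold with an involution, and $Z$ is its quotient. The periods $[(\kappa_i,\omega_i)]\in\CM\en^0$ cannot diverge in $\CM\en$. Divergence would force the K3 periods to leave every compact set of the K3 moduli, which forces collapse by the known K3 picture (a diverging sequence has a vanishing-volume direction). So, after applying $G_\IZ$, the periods converge to some point of $\CM\en$. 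By Step 2, $Z=\Phi$ of that point.

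\textbf{Main obstacle.} The hardest step is the well-definedness of $\sigma_\infty$ in Step 2, i.e., that the limiting involution is canonically attached to the period point. The orbifold points can be permuted or fixed by $\sigma_\infty$, and two different limits would give different quotients. I would attack this first by proving an orbifold strong Torelli statement. An isometry of $X_P$ that acts trivially on $H^2$ must be the identity, using the density of the smooth part and the fact that hyperk\"ahler isometries are determined by their action on $H^2$ via the holomorphic Lefschetz argument for K3. I would then compare two limit involutions through their composition.
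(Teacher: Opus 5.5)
Steps 1, 3 and 4 are sound; Step 4 is essentially the paper's surjectivity argument. The gap is in Step 2, at exactly the point you flag, and the tool you propose there is false.

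What "continuity of markings on the regular part" can transport is the action on $H^2(X_P^{\mathrm{reg}},\IQ)$. That space is only $(\Delta\cap P^\perp)^\perp\otimes\IQ$. The vanishing $(-2)$-classes, which record how the singular points are permuted, are invisible there. An isometry of $X_P$ is not determined by its action on this space together with $P$.

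Here is a counterexample. Take the flat Kummer orbifold $X=(E_1\times E_2)/\langle\iota\rangle$ with $\sigma(z_1,z_2)=(z_1+\tfrac12\lambda_1,-z_2+\tfrac12\lambda_2)$. Let $\tau$ be translation by the half period $(\tfrac12\lambda_1,0)$.
\begin{itemize}
\item $\tau$ descends to $X$, preserves $\kappa$ and $\omega$, and acts trivially on $H^2(X^{\mathrm{reg}},\IQ)=H^2(E_1\times E_2,\IQ)$, yet $\tau\neq\mathrm{Id}$.
\item Hence $\sigma$ and $\sigma\tau\colon(z_1,z_2)\mapsto(z_1,-z_2+\tfrac12\lambda_2)$ act identically on $P$ and on the orbifold $H^2$.
\item But $\sigma$ is free, while $\sigma\tau$ fixes two flat $2$-tori, so the two quotients are not isometric.
\end{itemize}
The two involutions differ only in how they permute the $16$ singular points, i.e.\ in their action on the classes $k_a$. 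So "same cohomological action, hence same involution" cannot give well-definedness of the limit.

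To repair your route, you would have to track the vanishing cycles through the ALE bubbles and identify $\sigma_\infty$ on the full lattice $H^2(\widetilde X_P,\IZ)\cong\Lambda$ of a minimal resolution. You could then apply Burns--Rapoport on the resolution. You would also have to control the fact that this identification is canonical only up to the Weyl group of $\Delta\cap P^\perp$, and depends on the side from which the wall is approached.

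The paper sidesteps all of this and never identifies the limiting involution. It argues as follows.
\begin{itemize}
\item Every possible limit lies in the set $\mathcal A_{X_\infty}$ of isometry classes of $\IZ_2$-quotients of the K3 limit.
\item This set is finite, because $\operatorname{Isom}(X_\infty)$ is a compact Lie group in which conjugacy classes of involutions are open (Lemma~\ref{lem:discrete_topology}).
\item The set $\mathfrak F(\xi_\infty)$ of all limits is connected, because the removed walls have codimension at least $2$, so $U_j\cap\CM\en^0$ is connected (Lemma~\ref{lem:connected_nbhd}).
\end{itemize}
A connected subset of a finite discrete set is a point. This gives well-definedness and continuity at once.
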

	
	Recall that for the K3 period domain $\CM_{K3}\cong \Gamma\backslash O(3,19)/(O(3)\times O(19))$, there is a continuous map $\Phi_{K3}: \CM_{K3}\to \FM_{K3}$ (see \cite{anderson-92} and \cite[Proposition 6.7]{odaka-oshima-21}), and there is a natural map $f: \CM\en \to\CM_{K3}$. From the construction, we see that $\Phi^0(\xi)$ is a $\IZ_2$-quotient of $\Phi_{K3}(f(\xi))$. For $\xi_\infty\in \CM\en\setminus \CM\en^0$, we define $\Phi(\xi_\infty)$ as a $\IZ_2$-quotient of $\Phi_{K3}(f(\xi_\infty))$.
	
	To prove Theorem~\ref{thm:continuous-non-collapsing}, we first establish some lemmas.
	The first lemma is on the topology of the period domain:
	\begin{lemma} \label{lem:connected_nbhd}
		Every $x\in\mathcal{M}_{\mathrm{En}}\setminus\mathcal{M}_{\mathrm{En}}^0$ has a neighborhood basis $\{U_j\}$ such that $U_j\cap\mathcal{M}_{\mathrm{En}}^0$ is connected.
	\end{lemma}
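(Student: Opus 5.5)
Let $\widetilde x=(\kappa,[\omega])\in K\Omega_{\mathrm{En}}$ be a lift of $x$. We will build the neighborhoods upstairs and push them down. The plan is to use that $K\Omega_{\mathrm{En}}\cong G/K$ is a real-analytic manifold (a product of a hyperbolic space and a Hermitian symmetric domain). Inside it, the removed set $H^-\cup H$ is a locally finite union of real-analytic submanifolds of real codimension at least $2$. Removing such a set from a connected open set (a ball) leaves it connected.

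\textbf{Codimension.} A component of $H^-$ is $\{[\omega]\perp\delta\}$ with $\delta\in\Lambda^-$ a $(-2)$-root. Since $[\omega]$ spans a positive oriented $2$-plane $P$, the condition $\delta\perp P$ imposes two real conditions, $(\mathrm{Re}\,\omega,\delta)=(\mathrm{Im}\,\omega,\delta)=0$. This gives a complex hypersurface of $\Omega_{\mathrm{En}}$, of real codimension $2$.

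A component of $H$ is $\{\kappa\perp\delta,\ [\omega]\perp\delta\}$ for $\delta\in\Delta$ arbitrary. Write $\delta=\delta^++\delta^-$ for its $\rho$-decomposition. The condition is $\kappa\perp\delta^+$ and $P\perp\delta^-$. If both parts are nonzero, this is $1+2=3$ real conditions. If $\delta^-=0$, then $\delta\in\Lambda^+\cong U(2)\oplus E_8(-2)$, which is impossible because every vector of that lattice has norm divisible by $4$. If $\delta^+=0$, the locus is already contained in $H^-$. In each case the condition is nondegenerate wherever the locus is nonempty, since $\delta^\pm$ are negative vectors when the locus is nonempty, so we get smooth submanifolds of codimension $\ge 2$.

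\textbf{Local finiteness.} This is standard: roots of fixed norm meeting a compact set of positive $3$-planes $(\kappa,P)$ form a finite set, as in the K3 case.

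\textbf{Passing to the quotient.} The group $G_\IZ$ acts properly discontinuously, and the stabilizer $\Gamma_{\widetilde x}$ is finite. Choose $\Gamma_{\widetilde x}$-invariant geodesic balls $B_j$ around $\widetilde x$ with radii tending to $0$, small enough that $gB_j\cap B_j\neq\emptyset$ implies $g\in\Gamma_{\widetilde x}$. Then $U_j=\Gamma_{\widetilde x}\backslash B_j$ form a neighborhood basis of $x$. Because $H^-\cup H$ is $G_\IZ$-invariant, we have $U_j\cap\CM\en^0=\Gamma_{\widetilde x}\backslash(B_j\setminus(H^-\cup H))$, which is a continuous image of a connected set. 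It remains to show $B_j\setminus(H^-\cup H)$ is connected. This holds because $B_j$ is a connected manifold and the removed set is a finite union of closed submanifolds of codimension $\ge 2$; by transversality, any path can be perturbed off them.

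\textbf{Main obstacle.} The step needing most care is the codimension count for $H$. One must justify that $\delta^-\neq0$ can always be arranged, or treat the degenerate case, and that the three conditions are independent. For independence, I would argue that when $\delta^+\ne0$ is negative, the condition $\kappa\perp\delta^+$ cuts a hyperplane in hyperbolic space, while $P\perp\delta^-$ cuts a complex hypersurface. These two conditions live on different factors, so their intersection is transverse.
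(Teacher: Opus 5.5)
Your proposal is correct and follows the same route as the paper: $H^-\cup H$ is a locally finite union of real-codimension-$\geq 2$ submanifolds, so small balls minus this set stay connected, and their images in $\mathcal{M}_{\mathrm{En}}$ give the required neighborhood basis. You also supply details the paper leaves implicit. These are that $\Lambda^+\cong U(2)\oplus E_8(-2)$ has no $(-2)$-vectors, so no wall is cut out by $\kappa\perp\delta$ alone (this already settles the ``degenerate case'' you list as the main obstacle), and the passage to the quotient through the finite stabilizer.
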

	
	\begin{proof}
		\begin{equation*}
			K\Omega\en \setminus K\Omega\en^{\circ} = H^-\cup H.
		\end{equation*}
		The right-hand side is a locally finite union of hyperplanes of real codimension at least $2$ (local finiteness follows from \cite[Ch.~VIII, Rem.~2.2]{huybrechts-lecturenotes} by restriction from the period domain of K3 surfaces). Removing such a union does not disconnect a sufficiently small connected neighborhood,
		so the image of such a neighborhood under $K\Omega\en\to\CM\en$ satisfies the condition in the lemma.
	\end{proof}

	The second lemma concerns properties of limit spaces:
	\begin{lemma} \label{lem:discrete_topology}
		Let $X$ be a compact metric space. Define the set $\mathcal{A}_X$ to be the collection of isometry classes of $\mathbb Z_2$-quotients of $X$.
		If $\operatorname{Isom}(X)$ is a Lie group, then $\mathcal{A}_X$ is finite. In particular, $\mathcal{A}_X$ is endowed with the discrete topology.
	\end{lemma}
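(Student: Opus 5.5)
The strategy is to reduce the classification of $\IZ_2$-quotients of $X$ to the classification of involutions of $X$ up to conjugacy in $\operatorname{Isom}(X)$, and then bound the number of conjugacy classes using compact Lie group theory. Here a $\IZ_2$-quotient means $X/\langle\tau\rangle$ with $\tau\in\operatorname{Isom}(X)$ and $\tau^2=\mathrm{id}$; the identity is allowed and gives $X$ itself. Write $L=\operatorname{Isom}(X)$. Since $X$ is compact, Arzel\`a--Ascoli makes $L$ a compact topological group with respect to uniform convergence. The hypothesis says $L$ is a Lie group, so $L$ is a compact Lie group.

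\textbf{Step 1: conjugate involutions give isometric quotients.} Suppose $\tau'=g\tau g^{-1}$ with $g\in L$. Then $g$ maps $\langle\tau\rangle$-orbits to $\langle\tau'\rangle$-orbits. The quotient distance is $d([x],[y])=\min_{h\in\langle\tau\rangle} d(x,hy)$, and $g$ preserves this formula, so it induces an isometry $X/\langle\tau\rangle\cong X/\langle\tau'\rangle$. Consequently $|\mathcal A_X|$ is at most the number of conjugacy classes of elements of order dividing $2$ in $L$.

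\textbf{Step 2: a compact Lie group has finitely many conjugacy classes of involutions.} This is the main step. I would use two facts.
\begin{itemize}
\item Any element $\tau$ with $\tau^2=1$ generates a finite subgroup. By the standard result that a compact Lie group has only finitely many conjugacy classes of finite subgroups of bounded order, or more directly by Montgomery--Zippin rigidity, nearby homomorphisms $\IZ_2\to L$ are conjugate to the original one.
\item Concretely, the set $S=\{\tau\in L:\tau^2=1\}$ is a compact analytic subset of $L$. Each conjugacy class $C(\tau)$ is open in $S$: for $\tau'$ close to $\tau$, averaging over the finite group shows that $\tau'$ is conjugate to $\tau$ by an element near the identity. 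This is the classical rigidity of homomorphisms of compact groups, proved by a center-of-mass argument with a bi-invariant metric.
\end{itemize}
It follows that $S$ is a compact space covered by disjoint open conjugacy classes, so there are only finitely many of them. I expect the rigidity step to carry the weight of the argument. If needed, I would give a self-contained proof as follows. Identify homomorphisms $\IZ_2\to L$ with points of $S$. Consider the map $L\to S$, $g\mapsto g\tau g^{-1}$. At $g=1$ its differential has image $(1-\mathrm{Ad}_\tau)\mathfrak l$, which is exactly the tangent space of $S$ at $\tau$, because $H^1(\IZ_2,\mathfrak l)=0$. Hence the orbit is open in $S$.

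\textbf{Step 3: conclusion.} Combining Steps 1 and 2, $\mathcal A_X$ is finite. A finite set of isometry classes is discrete in the Gromov--Hausdorff topology, since distinct compact metric spaces up to isometry have positive GH distance. So $\mathcal A_X$ carries the discrete topology.
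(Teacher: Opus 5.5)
Your proposal is correct and follows essentially the same route as the paper: both reduce $\mathcal{A}_X$ to conjugacy classes of involutions in the compact Lie group $\operatorname{Isom}(X)$. Both then show that each such class is open in the compact set $\{\tau:\tau^2=1\}$, so there are only finitely many.

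The only difference is in how openness is proved. The paper uses a direct computation with the $\pm1$-eigenspaces of $\mathrm{Ad}_\tau$ and local injectivity of $\exp$, whereas you invoke rigidity of homomorphisms of compact groups. If you rely on your tangent-space sketch instead, it needs one more step: $\{\tau^2=1\}$ lies locally in a submanifold of dimension $\dim\ker(1+\mathrm{Ad}_\tau)$, and the orbit then fills that submanifold near $\tau$.
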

	
	\begin{proof}
		Set $G=\operatorname{Isom}(X)$ and $\mathcal I=\{g\in G\mid g^2=e\}$.
		For any $g \in \mathcal{I}$, let $O_g \coloneq \{hgh^{-1} \mid h \in G\}$ be its conjugacy class.
		We know that $G$ is compact by the Arzel\`a--Ascoli theorem; hence $O_g$ is compact and closed in $\mathcal I$. We show that it is also open.
		
		Suppose $g\exp(Y)\in\mathcal I$ for small $Y\in\mathfrak g$.
		Since $g^2 = e$, the adjoint action $\mathrm{Ad}_g$ is an involution on $\mathfrak{g}$, yielding a decomposition $\mathfrak{g} = \mathfrak{g}_+ \oplus \mathfrak{g}_-$ corresponding to the $\pm 1$ eigenspaces.
		Write $\exp(Y)=\exp(Y_+)\exp(Y_-)$ accordingly. Then $g\exp(Y_+)=\exp(Y_+)g$ and
		similarly, $g \exp(Y_-) = \exp(-Y_-) g$.
		Because $g \exp(Y)$ is an involution, we have:
		\begin{align*}
			\mathrm{Id}=(g \exp(Y))^2 &= g \exp(Y_+) \exp(Y_-) g \exp(Y_+) \exp(Y_-) \\
			&= \exp(Y_+) \exp(-Y_-) \exp(Y_+) \exp(Y_-).
		\end{align*}
		This implies $\exp(-Y_+)=\exp(\operatorname{Ad}_{\exp(Y_-)}Y_+)$. Local injectivity of the exponential map gives
		\begin{equation*}
			(\mathrm{Id} + \mathrm{Ad}_{\exp(Y_-)}) Y_+ = 0.
		\end{equation*}
		For small $Y_-$ this operator is invertible, so $Y_+=0$. Therefore
		\[ g \exp(Y) = g \exp(Y_-) = \exp(-\frac{1}{2}Y_-) g \exp(\frac{1}{2}Y_-), \]
		so $g\exp(Y)$ is conjugate to $g$. Thus every conjugacy class in $\mathcal I$ is open.
		
		Compactness of $\mathcal I$ then implies that it has finitely many conjugacy classes; this proves the finiteness of $\mathcal{A}_X$.
	\end{proof}
	
	By \cite[Thm.~1.21]{colding-naber-12}, the isometry group of a Ricci limit space is a Lie group. 
	In particular, the result holds for any GH limit of hyperk\"ahler K3 surfaces.

	Next, we relate Enriques limits to their K3 covers. By the Arzel\`a--Ascoli theorem, we have:
	
	\begin{lemma}\label{lem:limit_quotient} 
		Consider a (collapsing or non-collapsing) sequence of KE metrics on Enriques surfaces $(Y_i,h_i)\to (Y_\infty,h_\infty)$.
		Let $(X_i,g_i)$ be the corresponding KE metrics on K3 surfaces. Suppose $(X_i,g_i)$ converges to $(X_\infty,g_\infty)$.
		Then there is an isometric $\IZ_2$-action on $X_\infty$ with $(Y_\infty,h_\infty)=(X_\infty,g_\infty)/\IZ_2$.
	\end{lemma}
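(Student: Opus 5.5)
We will prove this with an equivariant Gromov--Hausdorff compactness argument; the paper itself notes that the statement is an Arzel\`a--Ascoli argument. Write $p_i:X_i\to Y_i$ for the Riemannian double covers and $\sigma_i$ for the deck involutions. Then each $\sigma_i$ is an isometry of $(X_i,g_i)$, and $(Y_i,h_i)=(X_i,g_i)/\langle\sigma_i\rangle$ with the quotient metric
\[
d_{Y_i}(p_i(x),p_i(y))=\min\{d_{X_i}(x,y),\,d_{X_i}(x,\sigma_i y)\}.
\]
This holds because $p_i$ is a local isometry and every path downstairs lifts. The normalizations need a remark: we apply the same rescaling to $g_i$ and $h_i$. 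Since $\operatorname{diam}(Y_i)\le\operatorname{diam}(X_i)\le 2\operatorname{diam}(Y_i)+\text{const}$ fails in general, we use the bound $\operatorname{diam}X_i\le 2\operatorname{diam}Y_i$. This bound holds because any two points of $X_i$ are joined by lifting a minimal path downstairs, up to one application of $\sigma_i$. So the diameters stay comparable, and both convergences can be taken with the same scaling.

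\textbf{Step 1: extract a limiting involution.} Fix $\epsilon_i$-GH approximations $\phi_i:X_i\to X_\infty$ with $\epsilon_i\to0$, together with approximate inverses $\psi_i$. The maps $\phi_i\circ\sigma_i\circ\psi_i:X_\infty\to X_\infty$ are $O(\epsilon_i)$-isometries. We apply the standard Arzel\`a--Ascoli argument for almost-isometries, in the Grove--Petersen / Fukaya style: choose a countable dense subset of $X_\infty$ and a diagonal subsequence. This produces a distance-preserving map $\sigma_\infty:X_\infty\to X_\infty$ which is the limit of $\sigma_i$ in the equivariant GH sense. Because $X_\infty$ is compact, a distance-preserving self-map is surjective, so $\sigma_\infty$ is an isometry. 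Passing to the limit in the relation $\sigma_i^2=\mathrm{id}$ gives $\sigma_\infty^2=\mathrm{id}$. Thus $\{1,\sigma_\infty\}$ is an isometric $\IZ_2$-action; it may be trivial, or have fixed points.

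\textbf{Step 2: identify the quotient.} Define $\bar\phi_i:Y_i\to X_\infty/\IZ_2$ by $\bar\phi_i(p_i(x))=[\phi_i(x)]$. This is well defined up to an $O(\epsilon_i)$ error, since $\phi_i(\sigma_i x)$ is $O(\epsilon_i)$-close to $\sigma_\infty\phi_i(x)$. We then compare distances. On one side,
\[
d_{Y_i}(p_ix,p_iy)=\min\{d(x,y),d(x,\sigma_iy)\}.
\]
On the other,
\[
d_{X_\infty/\IZ_2}([\phi_ix],[\phi_iy])=\min\{d(\phi_ix,\phi_iy),d(\phi_ix,\sigma_\infty\phi_iy)\}.
\]
These agree up to $O(\epsilon_i)$. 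The map $\bar\phi_i$ is also $O(\epsilon_i)$-surjective. Hence $Y_i\to X_\infty/\IZ_2$ in the GH sense. Uniqueness of GH limits of compact spaces, up to isometry, then gives $(Y_\infty,h_\infty)\cong(X_\infty,g_\infty)/\IZ_2$.

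\textbf{Main obstacle.} The delicate point is Step 1. The maps $\sigma_i$ live on different spaces, so "convergence of $\sigma_i$" has to be made precise. The diagonal argument must produce a single limit map that is simultaneously an isometry, an involution, and compatible with the approximations $\phi_i$ uniformly. I would handle this with the uniform estimate
\[
d(\phi_i\sigma_i x,\sigma_\infty\phi_i x)\to0 \quad\text{uniformly in } x\in X_i .
\]
This estimate follows from equicontinuity: every $\phi_i\sigma_i\psi_i$ is $1$-Lipschitz up to $O(\epsilon_i)$, so pointwise convergence on a dense set upgrades to uniform convergence. The rest is bookkeeping.
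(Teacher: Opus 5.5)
Your proposal is correct and follows the same route as the paper, which disposes of the lemma with a one-line appeal to Arzel\`a--Ascoli. Your Steps 1--2 supply exactly the omitted details: extracting $\sigma_\infty$ from the almost-isometries $\phi_i\sigma_i\psi_i$, then comparing $\min\{d(x,y),d(x,\sigma_i y)\}$ with the orbit metric on $X_\infty/\langle\sigma_\infty\rangle$.

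The only blemish is the normalization remark. Its first sentence contradicts itself, and lifting a minimal path only gives $\min\{d(x,y),d(x,\sigma_i y)\}\le D$ with $D=\operatorname{diam}Y_i$, not $\operatorname{diam}X_i\le 2D$. This is inessential, since both convergences are hypotheses of the lemma. If you want the bound, use that $X_i=\bar B(x,D)\cup\bar B(\sigma_i x,D)$ for every $x$, so connectedness of $X_i$ forces $\bar B(x,D)\cap\bar B(y,D)\neq\emptyset$.
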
 
		%
		%

	\begin{proof}[Proof of Theorem~\ref{thm:continuous-non-collapsing}]
		
		Let $\xi_\infty\in\mathcal{M}_{\mathrm{En}}$. Define $\mathfrak{F}(\xi_\infty)$ to be the collection of all Gromov--Hausdorff limits of sequences $\Phi^0(\xi_i)$ such that $\xi_i\to \xi_\infty$. By Lemma~\ref{lem:limit_quotient}, every element of $\mathfrak{F}(\xi_\infty)$ is an isometric $\mathbb{Z}_2$-quotient of the corresponding K3 limit $X_\infty$. Hence $
		\mathfrak{F}(\xi_\infty)\subseteq \mathcal{A}_{X_\infty}$.
		As in \cite[Prop.~5.9]{ouyang2025compactificationmetricmodulispace}, Lemma~\ref{lem:connected_nbhd} implies that $\mathfrak F(\xi_\infty)$ is connected. Lemma~\ref{lem:discrete_topology} then makes it a singleton.
		Define $\Phi(\xi_\infty)$ to be this unique limit. Thus $\Phi^0$ extends continuously to a map
		\[
		\Phi:\mathcal{M}_{\mathrm{En}}\longrightarrow\mathfrak{M}_{\mathrm{En}}.
		\]
		
		Surjectivity follows from \cite[Theorem II]{anderson-92}: if a sequence diverges in $\CM\en$, its K3 covers collapse, and so do the Enriques quotients. Thus all the non-collapsing limits lie in the image of $\Phi$. This proves the theorem.
	\end{proof}
	\begin{remark}
		If orientation data are retained, then $\Phi$ is bijective. Indeed, let $(Y_\infty,g_\infty)\in \FM\en$ be an oriented non-collapsing limit. Then $Y_\infty$ is a $\IZ_2$-quotient of an oriented hyperk\"ahler K3 orbifold $X_\infty$. Let $X_\infty^0$ and $Y_\infty^0$ denote the regular parts of $X_\infty$ and $Y_\infty$, respectively, each obtained by removing finitely many points. The metric and orientation determine the space of parallel self-dual $2$-forms on the universal cover of $Y_\infty^0$, and this space is spanned by a hyperk\"ahler triple. The orbifold $X_\infty$ is determined by the property that this hyperk\"ahler triple descends to $X_\infty^0$. Thus $X_\infty$ and the Enriques involution are determined by $(Y_\infty,g_\infty)$ together with its orientation; therefore, the period point in $\CM\en$ is also determined.
		
	\end{remark}

	\section{Satake Compactification}

	\subsection{Structure of $\overline{G/K}^{\mathrm{Sat}}$}
	Our compactification is a slightly generalized version of the Satake compactification in \cite{satake-60-1}; we also refer to \cite{odaka-oshima-21,ouyang2025compactificationmetricmodulispace} for the Satake compactification of the K3 moduli space.
	Recall that
	\[
	G=O(1,9)\times O(2,10),
	\qquad
	K=O(1)\times O(9)\times O(2)\times O(10).
	\]
	We also set
	\[
	G_{K3}\coloneqq O(3,19),
	\qquad
	K_{K3}\coloneqq O(3)\times O(19).
	\]
	Thus $G_{K3}/K_{K3}$ is the symmetric space associated with the K3 lattice of signature $(3,19)$.
	The main result of this subsection, Theorem~\ref{thm:boundary_components}, describes the structure of $\overline{G/K}^{\mathrm{Sat}}$ and states that the compactification is obtained by adding components of the form $G_A/K_A$, where $A$ is a $\rho$-open set (Definition~\ref{definition-rho-open}).

	Let the Lie algebra be given by $\mathfrak{g} = \mathfrak{g}_1 \oplus \mathfrak{g}_2 = \mathfrak{so}(1,9) \oplus \mathfrak{so}(2,10)$.
	Let 
	\begin{equation} \label{equation-V-g-K3}
		V = \mathfrak{g}_{K3} = \mathfrak{so}(3,19)= \begin{pmatrix} \mathfrak{g}_1 & \star \\ \star & \mathfrak{g}_2 \end{pmatrix}
	\end{equation} 
	be the standard block matrix realization.
	We consider the representation $\rho: G \to SL(V, \mathbb{C})$.
	
	\begin{definition}
		There is a sequence of embeddings
		\[
		G/K \hookrightarrow G_{K3}/K_{K3} \hookrightarrow SL(V, \mathbb{C})/SU(V) \simeq \mathcal{P}(V) \hookrightarrow \mathbb{P}(\mathcal{H}(V)).
		\]
		The Satake compactification $\overline{G/K}^{\mathrm{Sat}}$ is defined as the closure of $G/K$ in $\mathbb{P}(\mathcal{H}(V))$.
		Here $\mathcal{P}(V)$ denotes the space of positive-definite Hermitian matrices of determinant $1$, and $\mathbb{P}(\mathcal{H}(V))$ denotes the projectivization of the space $\mathcal{H}(V)$ of Hermitian matrices on $V$.
	\end{definition}
	
	Equivalently, the compactification of $G/K$ can also be defined as its closure in $\overline{G_{K3}/K_{K3}}^{\mathrm{Sat,ad}}$.

	We now fix some Lie-algebra notation. Recall that the K3 lattice decomposes as $\Lambda_\IR=\Lambda_\IR^+\oplus\Lambda_\IR^-$ according to the Enriques involution. Consider the standard representations
	\[
	\mathfrak g_1\longrightarrow\mathfrak{sl}(\Lambda^+_{\mathbb C})
	\cong\mathfrak{sl}(\mathbb C^{10}),
	\qquad
	\mathfrak g_2\longrightarrow\mathfrak{sl}(\Lambda^-_{\mathbb C})
	\cong\mathfrak{sl}(\mathbb C^{12}).
	\]
	Denote their weights by $\{\pm e_i\}_{i=1}^5$ and $\{\pm f_j\}_{j=1}^6$, respectively. We choose the corresponding systems of simple roots
	\[
	\Pi_1=\{e_1-e_2,\ldots,e_4-e_5,e_4+e_5\},
	\qquad
	\Pi_2=\{f_1-f_2,\ldots,f_5-f_6,f_5+f_6\}.
	\]
	
	Let $\mathfrak g=\mathfrak k\oplus\mathfrak p$ be the Cartan decomposition associated with $K$, and choose a Cartan subalgebra $\mathfrak h$ such that
	\[
	\mathfrak h^- \coloneqq \mathfrak h\cap\mathfrak p
	=\mathfrak h_1^-\oplus\mathfrak h_2^-
	\]
	is maximal abelian in $\mathfrak p$. After identifying the Cartan subalgebra with its dual via the invariant bilinear form, we may choose the above coordinates so that
	\[
	\mathfrak h_1^-=\langle e_1\rangle,
	\qquad
	\mathfrak h_2^-=\langle f_1,f_2\rangle.
	\]
	
	Let $W$ denote the set of weights of the representation of $\mathfrak g$ on $V$. Explicitly,
	\[
	W=\{\pm a\pm b\mid a,b\in\{e_i,f_j\}\}.
	\]
	As a $\mathfrak g$-module, $V$ decomposes as $V=V_1\oplus V_2\oplus V_3$, with highest weights
	\[
	\lambda_1=e_1+e_2,\qquad
	\lambda_2=e_1+f_1,\qquad
	\lambda_3=f_1+f_2.
	\]
	Set
	\[
	W_{\mathrm{hw}}\coloneqq\{\lambda_1,\lambda_2,\lambda_3\},
	\]
	the set of highest weights of the irreducible summands of $V$.
	Here $V_1=\mathfrak g_1$, $V_3=\mathfrak g_2$, and $V_2$ is the off-diagonal block in Equation~\ref{equation-V-g-K3}.

	The closed positive Weyl chamber is
	\[
	\begin{aligned}
		\mathfrak a^-
		&\coloneqq
		\{H\in\mathfrak h^-:\alpha(H)\geq 0
		\quad\text{for every }\alpha\in\Pi_1\cup\Pi_2\}\\
		&=
		\{xe_1+yf_1+zf_2:x\geq 0,\ y\geq z\geq 0\}.
	\end{aligned}
	\]
	We next define the subsets of $W$ that determine the boundary strata. Intuitively, a $\rho$-open set consists of the weights that have comparable maximal growth along a sequence in $\mathfrak a^-$.
	
	\begin{definition}\label{definition-rho-open}
		A nonempty subset $A\subseteq W$ is called \emph{$\rho$-open} if there exists a sequence $H_i\in\mathfrak a^-$ such that, for all $\alpha_1,\alpha_2\in A$ and $\alpha_3\in W\setminus A$, the sequence $(\alpha_1-\alpha_2)(H_i)$ remains bounded, whereas
		\[
		(\alpha_1-\alpha_3)(H_i)\longrightarrow+\infty
		\qquad\text{as }i\longrightarrow\infty.
		\]
	\end{definition}
	
	\begin{definition}
		Set
		\[
		\Pi\coloneqq\Pi_1\cup\Pi_2
		\cup\{\pm(\lambda_i-\lambda_j):1\leq i<j\leq 3\}.
		\]
		For a $\rho$-open set $A\subseteq W$, define
		\[
		\Pi_A\coloneqq
		\{\beta\in\Pi:\beta=\alpha-\alpha'
		\text{ for some }\alpha,\alpha'\in A\}.
		\]
		
		Let $R_A$ be the root system spanned by $\Pi_A\cap (\Pi_1\cup\Pi_2)$, and set
		\[
		V_A\coloneqq\bigoplus_{\alpha\in A}V_\alpha,
		\mathfrak g_{A,\mathbb C}
		\coloneqq
		\operatorname{span}_{\mathbb C}(\Pi_A)
		+\sum_{\alpha\in R_A}\mathfrak g_\alpha
		\subseteq\mathfrak g_{\mathbb C},
		\qquad
		\mathfrak g_A\coloneqq
		\mathfrak g\cap\mathfrak g_{A,\mathbb C}.
		\]
		Let $G_A$ be the connected subgroup of $G$ with Lie algebra $\mathfrak g_A$, and let $K_A\coloneqq G_A\cap K$. The restriction of the representation to $\mathfrak g_A$ induces a map, again denoted by $\rho_A$,
		\[
		\rho_A:G_A/K_A
		\longrightarrow SL(V_A)/SU(V_A)
		\hookrightarrow\mathbb P(\mathcal H(V_A))
		\hookrightarrow\mathbb P(\mathcal H(V)),
		\]
		where $\mathbb{P}(\mathcal{H}(V_A)) \hookrightarrow \mathbb{P}(\mathcal{H}(V))$ is defined by assigning $0$ to the orthogonal complement of $V_A$.
	\end{definition}

We now classify the $\rho$-open sets and their corresponding components $(G_A,K_A)$.
By definition, $A$ is uniquely determined by $A\cap W_{\mathrm{hw}}$ and
\[
\Pi_A^-\coloneqq\operatorname{proj}_{\mathfrak h^-}(\Pi_A).
\]
The $\rho$-open sets $A\neq W$ are listed in Table~\ref{table:rho-open} (for $A=W$, we have $(G_A,K_A)=(G,K)$). We label each $\rho$-open set $A$ consistently with the corresponding K3 boundary type.
%

\begin{table}[htbp]
	\caption{The $\rho$-open sets $A\neq W$ and their boundary data.}
	\label{table:rho-open}
	\centering
	\renewcommand{\arraystretch}{1.35}
	\resizebox{\textwidth}{!}{%
		\begin{tabular}{c|c|c|c}
			\hline
			Type & $A\cap W_{\mathrm{hw}}$ & $\Pi_A^-$ & $(G_A,K_A)$ \\
			\hline
			$a_2$ & $\{\lambda_2,\lambda_3\}$
			& $\{e_1,f_2,e_1-f_2\}$
			& $(SO(1,9)\times SO(1,9),SO(9)\times SO(9))$ \\
			$b$ & $\{\lambda_2,\lambda_3\}$
			& $\{f_1-f_2,e_1-f_2\}$
			& $(\mathbb R\times SL_2(\mathbb R),SO(2))$ \\
			$c_2$ & $\{\lambda_2,\lambda_3\}$
			& $\{e_1-f_2\}$
			& $(\mathbb R,\{0\})$ \\
			$a_1$ & $\{\lambda_1,\lambda_2\}$
			& $\{f_1-f_2,f_2\}$
			& $(SO(9)\times SO(2,10),SO(9)\times SO(2)\times SO(10))$ \\
			$c_1$ & $\{\lambda_2\}$
			& $\{f_1-f_2\}$
			& $(SL_2(\mathbb R),SO(2))$ \\
			$d_1$ & $\{\lambda_2\}$
			& $\emptyset$
			& $(\{0\},\{0\})$ \\
			$d_2$ & $\{\lambda_3\}$
			& $\emptyset$
			& $(\{0\},\{0\})$ \\
			\hline
		\end{tabular}%
	}
\end{table}

Now we recall the boundary structure of the adjoint Satake compactification of $G_{K3}/K_{K3}$. Let $\rho_{K3}=\operatorname{Ad}:G_{K3}\to SL(V,\mathbb C)$. 
There are $4$ kinds of boundaries
\[
\rho_{K3,t}:G_{K3,t}/K_{K3,t}
\hookrightarrow\mathbb P(\mathcal H(V_t))
\hookrightarrow\mathbb P(\mathcal H(V)), \text{for } t\in\{a,b,c,d\},
\]
(the choice of $\rho_{K3,t}$ depends on a choice of simple roots of $\mathfrak{g}_{K3}$),
and the compactification satisfies
\[
\partial\overline{G_{K3}/K_{K3}}^{\mathrm{Sat,ad}}
=
\bigcup_{t\in\{a,b,c,d\},k\in K_{K3}}
k\cdot\rho_{K3,t}(G_{K3,t}/K_{K3,t}),
\]
where
\[
G_{K3,t}/K_{K3,t}
\cong
\begin{cases}
	SO(2,18)/(SO(2)\times SO(18)), & t=a,\\
	SL_3(\mathbb R)/SO(3), & t=b,\\
	SL_2(\mathbb R)/SO(2), & t=c,\\
	\mathrm{pt}, & t=d.
\end{cases}
\]

For $A$ in Table~\ref{table:rho-open}, let $t(A)$ be the corresponding ambient K3 type.
By the weight classification, there is $k_A\in K_{K3}$ such that
\[
\begin{aligned}
	V_A&=\rho_{K3}(k_A)V_{t(A)},\\
	G_A&=G\cap k_A G_{K3,t(A)},\\
	K_A&=G\cap k_A K_{K3,t(A)}.
\end{aligned}
\]
Moreover, $\rho_A$ is also given by the composite
\[
G_A/K_A
\hookrightarrow
k_A\cdot G_{K3,t(A)}/K_{K3,t(A)}
\hookrightarrow
\overline{G_{K3}/K_{K3}}^{\mathrm{Sat,ad}}
\hookrightarrow
\mathbb P(\mathcal H(V)).
\]

\begin{theorem}\label{thm:boundary_components}
	\begin{enumerate}
		\item The Satake compactification has the stratification
		\[
		\overline{G/K}^{\mathrm{Sat}}
		=
		\bigcup_{\substack{\rho\text{-open set} \ A}}
		K\cdot\rho_A(G_A/K_A).
		\]
		\item Any $2$ strata $k\cdot\rho_A(G_A/K_A)$ and $k'\cdot\rho_{A'}(G_{A'}/K_{A'})$ are either equal or disjoint.
	\end{enumerate}
\end{theorem}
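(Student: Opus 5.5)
The plan follows Satake's original argument, organized through the Cartan decomposition $G=K\exp(\mathfrak a^-)K$. Every point of $G/K$ has the form $k\exp(H)\cdot o$ with $k\in K$ and $H\in\mathfrak a^-$. Its image in $\mathbb P(\mathcal H(V))$ is $k\cdot\rho(\exp 2H)$, which is projectively equal to the diagonal Hermitian matrix with entries $e^{2\alpha(H)}$ on the weight spaces $V_\alpha$. Because $K$ is compact, a sequence $k_i\exp(H_i)$ has a convergent subsequence only after we pass to a subsequence with $k_i\to k$. The boundary analysis therefore reduces to describing the limits of $\rho(\exp 2H_i)$ with $H_i\in\mathfrak a^-$.

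For the inclusion $\subseteq$ in (1), let $H_i\in\mathfrak a^-$ be any sequence. After passing to a subsequence, each difference $(\alpha-\alpha')(H_i)$ for $\alpha,\alpha'\in W$ either stays bounded or tends to $\pm\infty$. Let $A$ be the set of weights whose values are within bounded distance of $\max_{\alpha\in W}\alpha(H_i)$. Then $A$ is $\rho$-open by construction, as in Definition~\ref{definition-rho-open}. Normalizing by $e^{-2\max\alpha(H_i)}$ kills the entries outside $A$. On $V_A$, the remaining bounded differences converge along a further subsequence. Next we decompose $H_i=H_i'+H_i''$, where $H_i'$ lies in the span of the coroots of $\Pi_A^-$ and $H_i''$ lies in its annihilator. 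The annihilator part acts on $V_A$ by a common scalar, because the weights in $A$ differ by elements of $\Pi_A$. The part $H_i'$ stays bounded, so it converges to some $H'$. The limit is then $\rho_A(\exp H')$, a point of $\rho_A(G_A/K_A)$. The inclusion $\supseteq$ is handled in the same way in reverse. Given $A$ and $H'$, take the sequence $H_i$ from the definition of $\rho$-open and perturb it by $H'$. Because $K\cdot\overline{G/K}\subseteq\overline{G/K}$, translating by $k$ is allowed. Finally, we must check that $\rho_A$ really parametrizes $G_A/K_A$, meaning that $G_A$ acts on $V_A$ through the restricted representation. This follows from the compatibility already recorded before the theorem: $V_A=\rho_{K3}(k_A)V_{t(A)}$ and $G_A=G\cap k_AG_{K3,t(A)}$. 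The case list in Table~\ref{table:rho-open} is then just the enumeration of the possible orderings of $x$, $y$, $z$ and of the growth rates of the weights of $W$, which are $\pm x\pm y$, $y\pm z$, $x$, $y$, and so on.

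For (2), we use the ambient K3 compactification. There, it is known that the strata $k\cdot\rho_{K3,t}(G_{K3,t}/K_{K3,t})$ are pairwise equal or disjoint, and that a boundary point determines the subspace $V_t$ as its support, that is, the image of the limiting Hermitian matrix. Suppose two Enriques strata meet at a point $p$. Then the supports $kV_A$ and $k'V_{A'}$ coincide. Hence the ambient types agree, and $t(A)=t(A')$.

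The main obstacle is showing that the Enriques label $A$, and not merely the type $t(A)$, is determined by $p$; note that types $a_2,b,c_2$ share the same $A\cap W_{\mathrm{hw}}$. My plan is to use the extra $K$-invariant structure, namely the splitting $V=V_1\oplus V_2\oplus V_3$ into the blocks $\mathfrak g_1$, the off-diagonal block, and $\mathfrak g_2$. This splitting is preserved by $K\subseteq G$. From the support $kV_A$ one reads off the dimensions of its intersections with each $V_j$. One also reads off the rank of the limiting Hermitian form restricted to each block, and the real rank of the group preserving that support. These invariants separate the rows of Table~\ref{table:rho-open}. For example, $a_2$, $b$ and $c_2$ have groups $G_A$ of real rank $2$, $2$ and $1$ with different dimensions. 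Once $A=A'$, the element $k^{-1}k'$ preserves $V_A$. It therefore normalizes $G_A$ modulo the centralizer, and the equality $k\rho_A(G_A/K_A)=k'\rho_A(G_A/K_A)$ follows from $G_A$-homogeneity of the stratum.
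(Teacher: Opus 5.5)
Your part (1) is essentially the paper's argument: the Cartan decomposition, the $\rho$-open set of maximally growing weights, and convergence of the bounded $\operatorname{span}(\Pi_A^-)$-component of $H_i$. For $\supseteq$ you should \emph{replace} that component by $H'$ rather than add $H'$ to it, and then use $K_A\subseteq K$ with the Cartan decomposition of $G_A$ to reach all of $\rho_A(G_A/K_A)$.

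For part (2) your route genuinely differs. The paper asserts $k\cdot\rho_A(G_A/K_A)=\overline{S_E}\cap kk_A\cdot\rho_{K3,t(A)}(G_{K3,t(A)}/K_{K3,t(A)})$ from the compatibility relations and then reduces wholesale to K3 disjointness. However, the inclusion $\supseteq$ in that identity already requires knowing that an Enriques stratum is determined by its support. This is nontrivial for two reasons: $a_1,a_2$ lie over ambient type $a$, $c_1,c_2$ over $c$, and the two $d$-types over $d$; and different $K$-translates must be identified. You prove exactly this. The support $kV_A$ of the limiting Hermitian form, together with the $K$-invariant triple $(\dim(kV_A\cap V_j))_{j=1,2,3}$, pins down $A$. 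These numbers alone separate all seven rows of Table~\ref{table:rho-open}: $(8,12,0)$ versus $(0,10,10)$ for $a_1,a_2$; $(0,2,1)$ for $b$; $(0,2,0)$ versus $(0,1,1)$ for $c_1,c_2$; and $(0,0,1)$ versus $(0,1,0)$ for the two $d$-types. So your example $a_2,b,c_2$, which is already separated by ambient type, and the real-rank invariant are not needed.

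The paper's route is shorter and maximally reuses the K3 theory. Yours is more intrinsic and makes the hidden step explicit, but you state one claim too quickly. For $m=k^{-1}k'\in K$, the condition $mV_A=V_A$ does not by itself give $m\cdot\rho_A(G_A/K_A)=\rho_A(G_A/K_A)$. You need the image of $\operatorname{Stab}_G(V_A)$ in $PGL(V_A)$ to normalize the image of $G_A$. This is standard Satake theory, or you can check it directly from the flag description. For example, for $a_2$ with isotropic line $\ell\subset\Lambda^-_{\mathbb R}$, the stabilizer acts on $V_A\cong\ell\otimes\bigl(\Lambda^+_{\mathbb R}\oplus(\ell^\perp\cap\Lambda^-_{\mathbb R})/\ell\bigr)$ through $\mathbb R^*\times O(1,9)\times O(1,9)$. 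Since $m$ fixes the base point (the orthogonal projection onto $V_A$), the equality of strata then follows.
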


\begin{proof}
	\textbf{(1)} We follow Satake's argument \cite[\S4.1]{satake-60-1}. By the Cartan decomposition \cite[Thm.~7.39]{knapp-lie-group-beyond},
	\[
	G=K\exp(\mathfrak a^-)K.
	\]
	It therefore suffices to study sequences represented by $\rho(\exp H_i)$ with $H_i\in\mathfrak a^-$. After passing to a subsequence, let $A\subseteq W$ be the $\rho$-open set of weights of maximal growth as in Definition~\ref{definition-rho-open}. Then $(\alpha_1-\alpha_2)(H_i)$ remains bounded for $\alpha_1,\alpha_2\in A$, whereas
	\[
	(\alpha_1-\alpha_3)(H_i)\longrightarrow+\infty
	\qquad
	(\alpha_3\in W\setminus A).
	\]
	After passing to a further subsequence, $\gamma(H_i)$ converges for every $\gamma\in\Pi_A$, and hence for every $\gamma\in\Pi_A^-$. Recall that
	\[
	\Pi_A^-\coloneqq\operatorname{proj}_{\mathfrak h^-}(\Pi_A).
	\]
	The limits $c_\gamma\coloneqq\lim_{i\to\infty}\gamma(H_i)$, for $\gamma\in\Pi_A^-$, define a linear functional on
	\[
	\operatorname{span}_{\mathbb R}(\Pi_A^-)
	\subseteq\mathfrak h^-.
	\]
	Since the restriction of the Killing form to $\mathfrak h^-$ is positive definite, it identifies this subspace with its dual. Hence there is a unique $
	H_{A,\infty}\in\operatorname{span}_{\mathbb R}(\Pi_A^-)$
	such that
	\[
	\gamma(H_{A,\infty})=c_\gamma
	\qquad\text{for every }\gamma\in\Pi_A^-.
	\]
	In particular, $\gamma(H_i-H_{A,\infty})\to0$ for every $\gamma\in\Pi_A^-$. Since $H_i,H_{A,\infty}\in\mathfrak h^-$, this also holds for every $\gamma\in\Pi_A$.
	
	Since $A$ represents those weights with maximal growth, the limit of $
	\rho(\exp H_i)$ lies in $\mathbb P(\mathcal H(V_A))$. Moreover, from the construction we obtain
	\[
	\lim_{i\to\infty}\rho(\exp H_i)
	=
	\rho_A(\exp H_{A,\infty})
	\in\mathbb P(\mathcal H(V_A)).
	\]
	Consequently,
	\[
	\overline{G/K}^{\mathrm{Sat}}
	\subseteq
	\bigcup_{A,H_{A,\infty}}K\cdot\rho_A(\exp H_{A,\infty})
	\subseteq 
	\bigcup_A K\cdot\rho_A(G_A/K_A).
	\]
	The reverse inclusion follows from the construction of each boundary stratum, proving the equality in (1).
	
	\textbf{(2)} For the K3 Satake compactification, any two boundary strata are either equal or disjoint; see \cite[\S4]{satake-60-1} or \cite[Cor.~I.4.32]{borel-ji-06}.
	
	Let $S_E$ denote the image of $G/K$ in $G_{K3}/K_{K3}$. By the compatibility established above, every Enriques stratum is of the form
	\[
	k\cdot\rho_A(G_A/K_A)
	=
	\overline{S_E}\cap
	kk_A\cdot\rho_{K3,t(A)}
	\bigl(G_{K3,t(A)}/K_{K3,t(A)}\bigr).
	\]
	If two Enriques strata intersect, then their corresponding K3 strata intersect and hence coincide. Intersecting this common K3 stratum with $\overline{S_E}$ shows that the $2$ Enriques strata are equal, proving (2).
\end{proof}

\subsection{Compactification of $\CM\en $ and Extendability of $\Phi$}

We now pass from the compactification of $G/K$ to its arithmetic quotient. Recall that
\[
G_\IQ \coloneqq \mathrm{Aut}(\Lambda_\IQ) \cap G,
\qquad
G_\IZ \coloneqq \mathrm{Aut}(\Lambda) \cap G,
\qquad
\CM\en=G_\IZ\backslash G/K.
\]
We collect all those rational boundaries:
\[
S^{\mathbb Q}
\coloneqq
\bigcup_{A: \rho \text{-open}}
G_\IQ\cdot\rho_A(G_A/K_A)
\subseteq \overline{G/K}^{\mathrm{Sat}}.
\]
Then the Satake compactification of the arithmetic quotient is given by the following quotient \cite[\S\S3.3--3.4]{satake-60-2}:
\[
\overline{\CM\en}^{\mathrm{Sat}}
\coloneqq
G_\IZ\backslash S^{\mathbb Q}.
\]
The compactification is obtained by adding finitely many boundary components of the form $G_{A,\IZ}\backslash G_A/K_A$, where $G_{A,\IZ}\coloneq G_A\cap G_\IZ$.
Moreover, the Satake topology restricts to the standard topology on each Siegel set in $S^{\mathbb Q}$. The proof of \cite[Prop.~3.20]{ouyang2025compactificationmetricmodulispace} applies verbatim to give:

\begin{proposition}\label{proposition-connectness-satake2}
	For every $y\in\partial\overline{\CM\en}^{\mathrm{Sat}}$, there is a neighborhood basis $\{U_r\}$ of $y$ such that $U_r\cap\CM\en$ is connected.
\end{proposition}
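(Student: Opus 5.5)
Our plan is to follow the argument of \cite[Prop.~3.20]{ouyang2025compactificationmetricmodulispace}: reduce the problem to a statement about Siegel sets in $G/K$, where the local structure of the Satake topology is explicit. Fix $y\in\partial\overline{\CM\en}^{\mathrm{Sat}}$ and a lift $\tilde y\in S^{\mathbb Q}$. By the rational version of Theorem~\ref{thm:boundary_components}, $\tilde y$ lies in a rational boundary stratum $g\cdot\rho_A(G_A/K_A)$ with $g\in G_\IQ$ and $A$ one of the $\rho$-open sets of Table~\ref{table:rho-open}. Let $P_A$ be the associated rational parabolic of $G$, with Langlands decomposition $P_A=N_AM_A\mathcal A_A$, where $G_A$ is (up to compact factors) the Hermitian/Levi part of $M_A$. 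By reduction theory \cite{satake-60-2}, finitely many Siegel sets $\mathfrak S=U\times \omega\times \mathcal A_{A,t}$ (with $U\subset N_A$ and $\omega\subset M_A/K$ relatively compact, and $\mathcal A_{A,t}=\{a:\alpha(\log a)>t\}$) cover a neighbourhood of $\tilde y$ modulo the stabilizer $\Gamma_{\tilde y}\subset G_\IZ$. The Satake neighbourhoods of $\tilde y$ are the $\Gamma_{\tilde y}$-saturations of sets of the form $\{(n,m,a)\in\mathfrak S: \pi_A(m)\in W,\ \alpha(\log a)>t\}$ together with the boundary piece $W$, where $W$ runs over neighbourhoods of $\tilde y$ in $G_A/K_A$ and $t\to\infty$.

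Next we check that the interior part of such a basic neighbourhood is connected. In horospherical coordinates $G/K\cong N_A\times (M_A/K_{M})\times \mathcal A_A$, the open set $\{\pi_A(m)\in W,\ \alpha(\log a)>t\}$ is a product of a connected nilpotent group $N_A$, a connected neighbourhood in $M_A/K_M$ (which fibres over a connected $W$ with connected fibre given by the compact-type or Euclidean factor of $M_A$ that is collapsed in $G_A/K_A$), and a convex cone in $\mathcal A_A$. It is therefore connected. Its image in $\CM\en$ is a continuous image, so it is connected as well. Because the Satake topology near $y$ is generated by such images, the sets $U_r$ obtained by taking $W$ to be small connected balls and $t=r\to\infty$ form a neighbourhood basis with $U_r\cap\CM\en$ connected.

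The main obstacle is the case of $\rho$-open sets whose $P_A$ is not a maximal parabolic. This happens for types $b$, $c_1$, $c_2$ and $d_1$, $d_2$, where the growth conditions involve several simple roots, so the boundary point can be approached along several rays. In this case we must make sure that the union of the Siegel pieces for the finitely many parabolics containing $P_A$ is still connected. We would argue this as in the K3 case: each piece contains the common deep region where all relevant roots exceed $t$, and this region is a convex cone times connected factors. The pieces therefore overlap pairwise in a connected set, and their union is connected. We also need the fact, from Satake's theory, that $\Gamma_{\tilde y}$ acts through the Levi part. This ensures that passing to the quotient does not break up these neighbourhoods.
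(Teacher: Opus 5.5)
Your overall strategy matches what the paper invokes through \cite[Prop.~3.20]{ouyang2025compactificationmetricmodulispace}: work in horospherical coordinates for the rational parabolic of $\tilde y$, show the interior part of a basic neighbourhood is connected, then take its continuous image in $\CM\en$. Non-maximality of $P_A$ is also harmless, since it only adds linear inequalities on $\log a$ and the $a$-region stays convex.

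The genuine gap is that your description of the neighbourhood basis of $\tilde y$ is wrong for the strata caused by the reducibility of $V=V_1\oplus V_2\oplus V_3$, namely types $b$, $c_2$, $d_1$, $d_2$.

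(i) For types $b$ and $c_2$ in Table~\ref{table:rho-open}, $G_A/K_A$ has a factor $\IR$ coming from $\lambda_2-\lambda_3=e_1-f_2\in\Pi_A$. This factor records the relative speed of degeneration of the $O(1,9)$ and $O(2,10)$ factors, and it lies in the split component $\mathcal A_A$, not in $M_A$. For $c_2$ the parabolic is the stabilizer of $w^+$ times a minimal parabolic of $O(2,10)$, so $M_A$ is compact while $G_A=\IR$. Hence $G_A$ is not the Levi part of $M_A$. Your sets $\{\pi_A(m)\in W,\ \alpha(\log a)>t\}$ leave $(\lambda_2-\lambda_3)(\log a)$ unconstrained. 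They therefore contain points accumulating at the strata at both ends of that line, and do not shrink to $\tilde y$.

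(ii) For the point strata of type $d$, the collapsed factor is neither compact nor Euclidean. Take the point attached to an isotropic plane $I\subset\Lambda^-$, where only $\lambda_3$ dominates. The parabolic is $O(1,9)\times\operatorname{Stab}(I)$, and the collapsed factor is $O(1,9)/O(9)\times SL_2(\IR)/SO(2)$, on which $G_\IZ$ acts with cusps. The other point stratum is similar, with collapsed factor the Levi $O(1,9)/O(9)$ of a line stabilizer in $O(2,10)$. Sequences that drift into these cusps more slowly than the dominant degeneration still converge to $\tilde y$. So $\Gamma_{\tilde y}$-saturations of Siegel sets with relatively compact $\omega$ are not neighbourhoods. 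On the other hand, the full product $N_A\times\pi_A^{-1}(W)\times\{\alpha(\log a)>t\}$ that you actually prove connected is too large: it contains points where the $\Lambda^+$-degeneration dominates, and these accumulate at other strata. Your first two paragraphs thus concern different sets, and neither is a basic neighbourhood.

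To close the gap, define the interior part of a basic neighbourhood directly from Definition~\ref{definition-rho-open}, using three conditions.
\begin{itemize}
\item Impose the dominance inequalities $(\alpha_1-\alpha_3)>t$, for $\alpha_1\in A$ and $\alpha_3\in W\setminus A$, on the total growth. This means $\log a_A$ plus a $\Gamma_{\tilde y}$-invariant height of $m$ in the collapsed factor, so the threshold on $\log a_A$ depends on how deep $m$ sits in a cusp.
\item Require the $\IR$-coordinates of $G_A$ to be $\epsilon$-close to those of $\tilde y$.
\item Require the remaining $G_A/K_A$-component to lie in a connected $W$.
\end{itemize}
For fixed $(n,m)$ these conditions are affine in $\log a_A$, so each fibre is a nonempty convex set. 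An open set with nonempty connected fibres over a connected base is connected, because the projection is open. Working globally in the coordinates of $P_A$ this way also replaces your ``common deep region'' gluing, which does not work as stated. Siegel sets attached to different minimal rational parabolics inside $P_A$ are separated far out in the cusps of the collapsed factor. They could only be joined through its compact part, and your argument does not address this.
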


By the same method as in \cite[Prop.~5.9]{ouyang2025compactificationmetricmodulispace},  
Proposition~\ref{proposition-connectness-satake2} and Lemmas~\ref{lem:discrete_topology} and~\ref{lem:limit_quotient} imply that $\Phi$ extends continuously to
\[
\overline{\Phi}:
\overline{\CM\en}^{\mathrm{Sat}}
\longrightarrow
\overline{\FM\en}^{\mathrm{GH}}.
\]
It remains to identify $\overline{\Phi}$ on each boundary stratum $\rho_A(G_A/K_A)$.

\subsection{Boundary Components}

By construction, $S^{\mathbb Q}$ contains the $G_\IQ$-translates of
the rational Satake strata, and a stratum $\rho_A(G_A/K_A)$ descends to the
arithmetic boundary component $G_{A,\IZ}\backslash G_A/K_A$. Its stabilizer
in $G$ is a parabolic subgroup, equivalently the stabilizer of an isotropic flag in $\Lambda_{\mathbb Q}$. Witt's theorem
\cite[Ch.~IV, Thm.~3]{serre-73} implies that $G_\IQ$ is transitive on
rational isotropic flags of each fixed type. Hence the boundary components are
classified by the $G_\IZ$-equivalence classes of rational isotropic flags in
$\Lambda_{\mathbb Q}$.

We use the decompositions
\[
\Lambda^+
\cong E_8(-2)\oplus U(2)
= E_8(-2)\oplus\langle e_2^+,f_2^+\rangle
\]
and
\[
\Lambda^-
\cong E_8(-2)\oplus U(2)\oplus U
= E_8(-2)
\oplus\langle e_2^-,f_2^-\rangle
\oplus\langle e_1^-,f_1^-\rangle,
\]
where the basis vectors satisfy
\[
(e_2^+,f_2^+)=(e_2^-,f_2^-)=2,
\qquad
(e_1^-,f_1^-)=1,
\]
and all $6$ basis vectors are isotropic. Moreover, we may choose these bases compatibly with the gluing
$\Lambda^+\oplus\Lambda^-\subset\Lambda$ so that
\[
e_2^++e_2^-,f_2^++f_2^-\in2\Lambda.
\]

The following theorem will be proved in \S~\ref{section-proof-thm-isotropic-class}:
\begin{theorem} \label{thm:isotropic_classes}
	There are exactly $32$ boundary components of
	$\overline{\CM\en}^{\mathrm{Sat}}$, corresponding to
	$32$ $G_\IZ$-equivalence classes of rational isotropic
	flags, distributed as follows.
	
	\begin{enumerate}
		\item[Type $a_1$] $1$-dimensional in $\Lambda^+$:
		\begin{itemize}
			\item Type $a_{1,1}$: $\langle e_2^+\rangle$.
		\end{itemize}
		\item[Type $a_2$] $1$-dimensional in $\Lambda^-$:
		\begin{itemize}
			\item Type $a_{2,1}$: divisor $1$: $\langle e_1^-\rangle$.
			\item Type $a_{2,2}$: divisor $2$: $\langle e_2^-\rangle$.
		\end{itemize}
		\item[Type $b$] $3$-dimensional in $\Lambda^+\oplus\Lambda^-$.
		We first identify the copies of $E_8(-2)$ in $\Lambda^+$ and $\Lambda^-$ so that if $x^+\in\Lambda^+$ and $x^-\in\Lambda^-$ correspond under this identification, then $\frac 12(x^++x^-) \in \Lambda$.
		Choose primitive vectors $x_+\in E_8(-2)\subset\Lambda^+$ and
		$x_-\in E_8(-2)\subset\Lambda^-$ with $x_+^2=-16$ and $x_-^2=-8$
		such that $[x_+/2]_{\Lambda^+}$ is orthogonal to
		$\psi^{-1}([x_-/2]_{\Lambda^-})$ (see \S~\ref{section-lattice} for the definition). The $6$ classes are represented by the following:
		
		\begin{itemize}
			\item Type $b_1$:
			$W_{b_1}=\langle e_2^-,e_1^-,e_2^+\rangle$.
			\item Type $b_2$:
			$W_{b_2}=\langle e_2^-,e_1^-,f_2^+\rangle$.
			\item Type $b_3$:
			$W_{b_3}=\langle e_2^-,e_1^-,2e_2^++2f_2^++x_+\rangle$.
			\item Type $b_4$:
			$W_{b_4}=
			\langle e_2^-,2e_1^-+2f_1^-+x_-,e_2^+\rangle$.
			\item Type $b_5$:
			$W_{b_5}=
			\langle e_2^-,2e_1^-+2f_1^-+x_-,f_2^+\rangle$.
			\item Type $b_6$:
			$W_{b_6}=
			\langle e_2^-,2e_1^-+2f_1^-+x_-,
			2e_2^++2f_2^++x_+\rangle$.
		\end{itemize}
		\item[Type $c_1$] Isotropic flags of dimension type $(1,3)$,
		$I_1\subset I_3$, where
		$I_1\subset\Lambda^+$.
		For $I_3=W_{b_i}$, the flag is
		\[
		(W_{b_i}\cap\Lambda^+)\subset W_{b_i}.
		\]
		For each $1\leq i\leq6$, this gives $1$ class $c_{1,i}$.
		\item[Type $c_2$] Isotropic flags of dimension type $(1,3)$,
		$I_1\subset I_3$, where
		$I_1\subset\Lambda^-$.
		The $11$ classes have the following representatives:
		\begin{itemize}
			\item For $i=1,2,3$,
			\[
			c_{2,ia}:\ \langle e_1^-\rangle
			\subsetneq W_{b_i},
			\qquad
			c_{2,ib}:\ \langle e_2^-\rangle
			\subsetneq W_{b_i}.
			\]
			\item For $i=4,5$,
			\[
			c_{2,ia}:\ \langle e_2^-\rangle
			\subsetneq W_{b_i},
			\qquad
			c_{2,ib}:\ 
			\langle2e_1^-+2f_1^-+x_-\rangle
			\subsetneq W_{b_i}.
			\]
			\item Over $b_6$ there is $1$ class,
			\[
			c_{2,6}:\ \langle e_2^-\rangle
			\subsetneq W_{b_6}.
			\]
		\end{itemize}
		\item[Type $d_1$] $2$-dimensional in $\Lambda^-$:
		\begin{itemize}
			\item Type $d_{1,1}$: $\langle e_2^-,e_1^-\rangle$.
			\item Type $d_{1,2}$:
			$\langle e_2^-,2e_1^-+2f_1^-+x_-\rangle$.
		\end{itemize}
		\item[Type $d_2$] $2$-dimensional in $\Lambda^+\oplus\Lambda^-$:
		\begin{itemize}
			\item Type $d_{2,1}$: $\langle e_2^+,e_2^-\rangle$.
			\item Type $d_{2,2}$: $\langle e_2^+,f_2^-\rangle$.
			\item Type $d_{2,3}$:
			$\langle e_2^+,2e_1^-+2f_1^-+x_-\rangle$.
			\item Type $d_{2,4}$: $\langle e_2^+,e_1^-\rangle$.
		\end{itemize}
	\end{enumerate}
\end{theorem}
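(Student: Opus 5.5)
The plan is to reduce the count to a classification of $G_\IZ$-orbits of primitive isotropic sublattices and flags, handled type by type. By the discussion preceding the theorem, boundary components correspond bijectively to $G_\IZ$-classes of rational isotropic flags whose type is dictated by Table~\ref{table:rho-open}. Since $G_\IZ$ consists of isometries of $\Lambda$ commuting with $\rho$, such a flag is a flag of isotropic subspaces of $\Lambda^+_\IQ$ and $\Lambda^-_\IQ$, placed inside $\Lambda^+$, $\Lambda^-$ or their sum according to the type. I will first describe $G_\IZ$ concretely. Because $\Lambda$ is an overlattice of $\Lambda^+\oplus\Lambda^-$ glued along an isomorphism of discriminant groups $\psi: A_{\Lambda^+}\cong(\IZ/2)^{10}\to$ a subgroup of $A_{\Lambda^-}$, the group $G_\IZ$ is exactly the set of pairs $(g_+,g_-)\in O(\Lambda^+)\times O(\Lambda^-)$ whose induced discriminant actions are compatible under $\psi$. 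The key input, which I expect to be available in \S\ref{section-lattice}, is that $O(\Lambda^+)\to O(q_{\Lambda^+})$ and $O(\Lambda^-)\to O(q_{\Lambda^-})$ are surjective. This follows from Nikulin's results, since $\Lambda^+=U\oplus E_8$ rescaled by $2$ and $\Lambda^-$ contains $U$. With surjectivity, $G_\IZ$-orbits of pairs $(I^+,I^-)$ reduce to $O(\Lambda^+)$-orbits of $I^+$ and $O(\Lambda^-)$-orbits of $I^-$, together with the relative position of their discriminant images $[I^\pm\otimes\IQ\cap\tfrac12\Lambda^\pm]$ under $\psi$.

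The single-lattice cases are handled by Eichler-type criteria. For type $a_1$, a primitive isotropic vector $v\in\Lambda^+=U(2)\oplus E_8(-2)$ is $\sqrt2$ times a vector of the unimodular lattice $U\oplus E_8(-1)$, so $O(\Lambda^+)$ is transitive and there is one class. For type $a_2$, a primitive isotropic vector in $\Lambda^-$ has divisor $1$ or $2$, and the Eichler criterion applied to $\Lambda^-\supset U$ shows that the orbit is determined by the divisor and the class $v/\mathrm{div}(v)$ in the discriminant group. Isotropic classes in $A_{\Lambda^-}$ of the relevant kind form one orbit, which gives $a_{2,1}$ and $a_{2,2}$. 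For type $d_1$, an isotropic plane in $\Lambda^-$ contains a distinguished line; I will classify it by the divisor data of the plane, yielding $\langle e_2^-,e_1^-\rangle$ versus $\langle e_2^-,2e_1^-+2f_1^-+x_-\rangle$. The second generator is chosen so that its divisor is $2$ while its image in the discriminant is a nonzero isotropic class orthogonal to $e_2^-/2$. For type $d_2$, I pair a line in $\Lambda^+$ (unique up to symmetry) with a line in $\Lambda^-$. The stabilizer of $e_2^+$ acts on $A_{\Lambda^+}$ with orbits on the relevant classes, and through $\psi$ this splits the $\Lambda^-$ lines into $4$ classes. These are distinguished by the divisor of the $\Lambda^-$ vector and by whether its discriminant class equals $\psi(e_2^+/2)$, is orthogonal but distinct from it, or not orthogonal to it.

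Type $b$ is a $3$-space made of a line in $\Lambda^+$ together with a plane in $\Lambda^-$. The plane is of type $d_{1,1}$ or $d_{1,2}$, and the line is then sorted relative to it, giving $b_1,\dots,b_6$: there are $3$ relative positions for $d_{1,1}$, namely $\psi$-images equal, orthogonal and distinct, or nonorthogonal, and $3$ for $d_{1,2}$. For type $c$, I fix a $b$-class and choose a line in it. Type $c_1$ requires the line in $\Lambda^+$, which is unique, giving $6$ classes. Type $c_2$ requires a line in the plane $W_{b_i}\cap\Lambda^-$ up to the stabilizer of $W_{b_i}$. The plane has two line types distinguished by divisor or discriminant class, except over $b_6$, where an extra symmetry in the stabilizer swaps them; this gives $2+2+2+2+2+1=11$. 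Summing, $1+2+6+6+11+2+4=32$.

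I expect the main obstacle to be the relative-position step in types $b$, $c_2$ and $d_2$. It requires showing that the listed discriminant invariants are complete: the stabilizer of one piece in $O(\Lambda^\pm)$ must still surject onto the stabilizer of the corresponding discriminant data. I would attempt this by splitting off a copy of $U$ or $U(2)$ that contains the fixed isotropic vector and applying Nikulin surjectivity to the orthogonal complement. I also need to exhibit explicit invariants separating the $32$ representatives, namely divisors, discriminant classes, and $q$-values of $\psi$-pairings, in order to rule out coincidences such as $b_6$ collapsing further.
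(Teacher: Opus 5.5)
Your plan follows the paper's proof essentially step for step. It describes $G_\IZ$ as pairs of isometries compatible under $\psi$, uses surjectivity of $O(\Lambda^\pm)\to O(A_{\Lambda^\pm})$, and counts orbits type by type via divisors and the position of $\psi$-images relative to $[e_2^+/2]_{\Lambda^+}$ or the isotropic plane $P$, giving $1+2+6+6+11+2+4=32$.

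One correction to the justification. The standard Eichler criterion needs two orthogonal copies of $U$, and $\Lambda^-\cong U\oplus U(2)\oplus E_8(-2)$ contains only one, so the facts you need in $\Lambda^-$ should be taken from Sterk's results, as the paper does. These are the two orbits of types $a_2$ and $d_1$, and the fact that a primitive isotropic vector is determined up to $\widetilde O(\Lambda^-)$ by its divisor and discriminant class. The stabilizer-surjectivity step you flag as the main obstacle is exactly Lemma~\ref{lem:lattice_properties}(2), proved with the transvection $E_{e_2^+,x/2}$ and surjectivity for $E_8(-2)$.
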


\subsection{Lattice preliminaries} \label{section-lattice}

Here we collect some facts used in the orbit calculation. For a
nondegenerate even lattice $M$, set
\[
M^\vee
\coloneqq\{x\in M\otimes\mathbb Q:(x,M)\subseteq\mathbb Z\},
\qquad
A_M\coloneqq M^\vee/M.
\]
The bilinear form on $M$ induces
\[
b_M:A_M\times A_M\longrightarrow\mathbb Q/\mathbb Z,
\qquad
q_M:A_M\longrightarrow\mathbb Q/2\mathbb Z.
\]
We call $\alpha,\beta\in A_M$ orthogonal when $b_M(\alpha,\beta)=0$. For $u\in M^\vee$, let $[u]_M$ denote its
class in $A_M$. If $w\in M$ is primitive, we define $\operatorname{div}_M(w)$ by
\[
\operatorname{div}_M(w)\mathbb Z=(w,M).
\]

Because $\Lambda$ is unimodular and $\Lambda^\pm$ are primitive orthogonal
sublattices, there is a canonical anti-isometry
\[
\psi:(A_{\Lambda^+},q_{\Lambda^+})
\xrightarrow{\sim}
(A_{\Lambda^-},-q_{\Lambda^-}).
\]
Moreover, for $a^\pm\in\Lambda^\pm$ we have
\[
\psi([a^+/2]_{\Lambda^+})=[a^-/2]_{\Lambda^-}
\quad\Longleftrightarrow\quad
\frac{1}{2}(a^++a^-)\in\Lambda.
\]
Further, let $\bar g_\pm\in O(A_{\Lambda^\pm})$ denote the actions induced by
$g_\pm\in O(\Lambda^\pm)$. The
pair $(g_+,g_-)$ extends to an isometry of $\Lambda$ if and only if
\begin{equation} \label{compatible}
	\psi\circ\bar g_+ \circ\psi^{-1}=\bar g_-
\end{equation}
(cf.~\cite[Ch.~14, Prop.~2.6]{huybrechts-lecturenotes}).  Moreover, the
maps $O(\Lambda^\pm)\to O(A_{\Lambda^\pm})$ are surjective because
$\Lambda^\pm$ are indefinite $2$-elementary lattices
(cf.~\cite[Proposition~3.6.3]{nikulin}).

Finally, if $e\in M$ is isotropic and $a\in e^\perp\cap M$, the Eichler
transvection
\[
E_{e,a}(z)
\coloneqq z-(a,z)e+(e,z)a-\frac12(a,a)(e,z)e
\]
lies in $ O(M)$. It fixes $e$ and will be used below to put
isotropic vectors and flags into the representatives listed in
Section~4.3.

\begin{lemma}\label{lem:lattice_properties}
	\begin{enumerate}
		\item The bilinear form on $A_{E_8(-2)}$ is the fourfold orthogonal
		sum $u^{\oplus4}$ of the hyperbolic form on $\mathbb F_2^2$, and
		\[
		O(E_8(-2))\twoheadrightarrow O(A_{E_8(-2)}).
		\]
		\item The stabilizer map
		\[
		O_{e_2^+}(\Lambda^+)
		\twoheadrightarrow
		O_{[e_2^+/2]_{\Lambda^+}}(A_{\Lambda^+})
		\]
		is surjective. Here $O_{e_2^+}(\Lambda^+)$ denotes the stabilizer of $e_2^+$ in $O(\Lambda^+)$, and similarly for $O_{[e_2^+/2]_{\Lambda^+}}(A_{\Lambda^+})$.
		\item If $v_1,v_2\in \Lambda^\pm$ are primitive isotropic vectors with
		\[
		\left[\frac{v_1}{\operatorname{div}(v_1)}\right]_{\Lambda^\pm}
		=
		\left[\frac{v_2}{\operatorname{div}(v_2)}\right]_{\Lambda^\pm},
		\]
		then $v_1,v_2$ are equivalent under
		$\widetilde O(\Lambda^\pm)$. Here
		\[
		\widetilde O(M)
		\coloneqq\ker\bigl(O(M)\longrightarrow O(A_M)\bigr)
		\]
		denotes the stable orthogonal group.
	\end{enumerate}
\end{lemma}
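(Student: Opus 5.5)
The three parts are independent, and I would prove each by reducing to Nikulin's theory of $2$-elementary lattices together with Eichler transvections.

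\textbf{(1).} First I compute $A_{E_8(-2)}$. Since $E_8$ is unimodular, $E_8(-2)^\vee=\tfrac12 E_8(-2)$, so $A_{E_8(-2)}\cong E_8/2E_8\cong\mathbb F_2^8$. For $x,y\in E_8$ the discriminant form is
\[
b\bigl([x/2],[y/2]\bigr)=-\tfrac12(x,y)_{E_8}\bmod\mathbb Z,\qquad q\bigl([x/2]\bigr)=-\tfrac12(x,x)_{E_8}\bmod 2\mathbb Z .
\]
Because $E_8$ is even, $q$ takes values in $\mathbb Z/2\mathbb Z$. Thus $q$ is the mod-$2$ reduction of $x\mapsto\tfrac12(x,x)$, a nondegenerate quadratic form on $\mathbb F_2^8$. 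It has Arf invariant $0$, since $E_8/2E_8$ contains the $4$-dimensional totally singular image of a sum of copies of $D_4$-type sublattices, or more simply because the form represents $0$ on exactly $136$ nonzero classes (the $120$ root pairs plus $16$ others). Hence $q\cong u^{\oplus4}$. Surjectivity of $O(E_8(-2))=O(E_8)=W(E_8)\to O^+(8,2)$ is the classical fact that $W(E_8)/\{\pm1\}\cong O_8^+(2)$. Alternatively, reflections in roots $r$ map to transvections in $[r/2]$ (which has $q=1$), and these transvections generate the full orthogonal group of a quadratic space over $\mathbb F_2$ of dimension $\ge 6$; I would cite the latter route.

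\textbf{(2).} Write $\Lambda^+=E_8(-2)\oplus U(2)$. The class $[e_2^+/2]$ is isotropic and lies in $A_{U(2)}$. Given $\bar h$ in the stabilizer of $[e_2^+/2]$, I first use $O(E_8(-2))\oplus O(U(2))$ to reduce modulo the subgroup generated by $\bar h$'s that do not mix summands, using part (1) on the $E_8$ block. The remaining generators of the stabilizer, namely the ``transvection-type'' elements moving $[f_2^+/2]$ by vectors of $A_{E_8(-2)}$, are realized by the Eichler transvections $E_{e_2^+,a}$ with $a\in E_8(-2)$. Indeed $E_{e_2^+,a}$ fixes $e_2^+$, and on the discriminant group it sends $[f_2^+/2]\mapsto[f_2^+/2]+[a]\cdot(\ldots)$. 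Modulo $\Lambda^+$ this is $[f_2^+/2+a]$ plus a multiple of $[e_2^+/2]$, and it is the identity on $A_{E_8(-2)}$ up to adding multiples of $[e_2^+/2]$. A direct computation of the image shows that the stabilizer of an isotropic vector in $O(u^{\oplus5})$ is generated by $O(u^{\oplus4})$, these Siegel transvections, and the swaps inside $u$ fixing the vector. Each of these is then lifted.

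\textbf{(3).} This is an Eichler criterion for $2$-elementary lattices. Because $\Lambda^\pm$ contains $U(2)\oplus U(2)$ (for $\Lambda^-$ it contains $U\oplus U(2)$), the standard Eichler criterion, as generalized by Gritsenko--Hulek--Sankaran to lattices containing two hyperbolic planes (here one may be $U(2)$, so I would verify that their argument applies with a scaled plane), states that the $\widetilde O$-orbit of a primitive vector is determined by its norm and the class $[v/\operatorname{div}(v)]$.

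The main obstacle is that $\Lambda^+$ contains no copy of $U$. I would pass to $\Lambda^+(1/2)=U\oplus E_8(-1)$, which is unimodular, where $\operatorname{div}$ changes and the usual criterion applies. I would then check that $O(\Lambda^+)=O(\Lambda^+(1/2))$ and that $\widetilde O$ transfers correctly; for $\Lambda^-$, which contains $U$, the direct Eichler transvection argument works. Concretely, I reduce $v$ to $d e+ w$ with $e\in U$ using transvections and then match classes, which is the step I expect to require the most care.
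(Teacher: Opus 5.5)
\textbf{Part (2).} Your outline has the same shape as the paper's proof: move an isotropic partner $\gamma$ of $\beta=[e_2^+/2]_{\Lambda^+}$ to $[f_2^+/2]_{\Lambda^+}$, then use (1) on $A_{E_8(-2)}$. However, the step meant to realize the Siegel transvections fails as written. For $a\in E_8(-2)\subset\Lambda^+$, the Eichler transvection $E_{e_2^+,a}$ lies in $\widetilde O(\Lambda^+)$. Indeed, $E_{e_2^+,a}(f_2^+/2)=f_2^+/2+a-\tfrac12(a,a)e_2^+\equiv f_2^+/2 \pmod{\Lambda^+}$, because $a\in\Lambda^+$ and $(a,a)\in 4\IZ$. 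Likewise it fixes $A_{E_8(-2)}$: transvections built from lattice vectors of an even lattice $M$ always act trivially on $A_M$. What you need is $E_{e_2^+,x/2}$ with $x\in E_8(-2)$, i.e.\ $a\in E_8(-2)^\vee$. This is still an integral isometry of $\Lambda^+$ precisely because $(e_2^+,\Lambda^+)=2\IZ$, $(x,\Lambda^+)\subseteq 2\IZ$ and $(x,x)\in 4\IZ$. It sends $[f_2^+/2]$ to $[f_2^+/2+x/2-\tfrac18(x,x)e_2^+]$. This half-vector transvection is the key step in the paper's proof.

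\textbf{Part (3).} For $\Lambda^+$ the rescaling plan does not work. First, $L=\Lambda^+(1/2)\cong U\oplus E_8(-1)$ has signature $(1,9)$, so it contains no $U\oplus U$ and no Eichler criterion is available. More fundamentally, $\widetilde O$ does not transfer: $A_{\Lambda^+}\cong L/2L$, so $\widetilde O(\Lambda^+)=\ker\bigl(O(L)\to O(L/2L)\bigr)$ is the level-$2$ congruence subgroup, while $\widetilde O(L)=O(L)$. Any statement about $L$ only yields $O(L)$-orbits, which forget the class $[v/2]$ that (3) is about. The missing idea is that (3) for $\Lambda^+$ follows from (2), so the parts are not independent. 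Since $O(\Lambda^+)$ is transitive on primitive isotropic vectors (as $v^\perp/v\cong E_8(-1)$ in $L$), take $g$ with $gv_1=v_2$. Then $\bar g$ fixes $[v_1/2]=[v_2/2]$, and (2), transported by conjugation, gives $h\in O_{v_1}(\Lambda^+)$ with $\bar h=\bar g$. Hence $gh^{-1}\in\widetilde O(\Lambda^+)$ maps $v_1$ to $v_2$.

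For $\Lambda^-\cong U\oplus U(2)\oplus E_8(-2)$ there is only one copy of $U$: a second one would split off, leaving a rank-$8$ complement with discriminant group $(\IZ/2)^{10}$. So the criterion you quote does not apply, and the ``direct Eichler argument'' is exactly the nontrivial adaptation you defer. The paper takes it from \cite[Cor.~3.3]{sterk}; you must either cite that or carry it out.

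A smaller point in (1): root classes have $q=1$, so they are not singular. The nonzero singular classes are the $135$ classes of norm-$4$ vectors, and it is the count of $136=2^7+2^3$ singular vectors \emph{including} $0$ that identifies the plus type.
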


\begin{proof}
	Part~(1) is the standard description of the discriminant form of the
	$E_8$ lattice; see
	\cite[Ch.~VI, \S4, Ex.~1]{bourbaki}. 
	
	For part~(2), let
	$\gamma\in A_{\Lambda^+}$ be an arbitrary isotropic vector with
	$b_{\Lambda^+}([e_2^+/2]_{\Lambda^+},\gamma)=\frac12$. Then we may write 
	$\gamma=[f'/2]_{\Lambda^+}$ with
	\[
	f'=a e_2^++f_2^++x,\qquad x\in E_8(-2).
	\]
	Applying the Eichler transvection $E_{e_2^+,\frac x2}$ reduces this to the case
	$x=0$, and isotropy then gives $a=0$. Part~(1) supplies the remaining
	action on the orthogonal complement, proving surjectivity of the
	stabilizer map. 
	
	For part~(3), the assertion for $\Lambda^+$ follows from (2); the assertion for $\Lambda^-$ is
	\cite[Cor.~3.3]{sterk}.
\end{proof}

\subsection{Proof of Theorem~\ref{thm:isotropic_classes}}\label{section-proof-thm-isotropic-class}

The correspondence between the Satake boundary types and the associated
types of rational isotropic flags is given in 
\cite{ouyang2025compactificationmetricmodulispace}. It therefore
remains to classify, up to $G_\IZ$-equivalence, the rational isotropic flags
listed above. We proceed dimension by
dimension.

\vspace{0.5em}
\noindent \textbf{Cases $a_1$ and $a_2$: $1$-dimensional isotropic
	subspaces $\langle w^\pm\rangle\subset\Lambda^\pm$.}

For $\Lambda^+$, rescale the bilinear form by $1/2$. For every primitive
isotropic vector $w^+\in\Lambda^+(1/2)$, the quotient
$(w^+)^\perp/\langle w^+\rangle$ is isometric to $E_8(-1)$, the unique
negative-definite even unimodular lattice of rank $8$. Hence there is $1$
orbit, of type $a_{1,1}$.

For $\Lambda^-$, \cite[Prop.~4.5]{sterk} gives $2$ orbits, of types
$a_{2,1}$ and $a_{2,2}$, distinguished by
$\operatorname{div}_{\Lambda^-}(w^-)=1$ or $2$.

\vspace{0.5em}
\noindent \textbf{Case $d_1$: $2$-dimensional isotropic planes
	$\langle w_1^-,w_2^-\rangle\subset\Lambda^-$.}
As in Case $a_2$, \cite[Prop.~4.6]{sterk} gives $2$ classes, of types
$d_{1,1}$ and $d_{1,2}$, with divisor types $(1,2)$ and $(2,2)$,
respectively.

\vspace{0.5em}
\noindent \textbf{Case $d_2$: mixed isotropic planes
	$I=\langle w^+,w^-\rangle\subset\Lambda^+\oplus\Lambda^-$.}

By the type-$a_1$ case, we may assume
that $w^+=e_2^+$. Set
\[
\beta\coloneqq[e_2^+/2]_{\Lambda^+},
\qquad
\alpha\coloneqq\psi^{-1}\!\left(
\left[\frac{w^-}{\operatorname{div}(w^-)}\right]_{\Lambda^-}
\right)
\in A_{\Lambda^+}.
\]
The subgroup preserving $e_2^+$ acts on $A_{\Lambda^+}$ through the full
stabilizer $O_\beta(A_{\Lambda^+})$ by
Lemma~\ref{lem:lattice_properties}(2). Every such action can be matched,
via $\psi$, by an isometry of $\Lambda^-$ and hence extends to $\Lambda$.
Once the class $\alpha$ is fixed, Lemma~\ref{lem:lattice_properties}(3)
allows us to choose a representative for $w^-$. Thus the $G_\IZ$-orbit of
$I$ is determined by the $O_\beta(A_{\Lambda^+})$-orbit of $\alpha$.

Suppose first that $\operatorname{div}_{\Lambda^-}(w^-)=2$. Then
$\alpha=\psi^{-1}([w^-/2]_{\Lambda^-})$ is a nonzero isotropic class. Note that
$A_{\Lambda^+}=u^{\oplus5}$ is nondegenerate. Witt's theorem \cite[Ch.~IV, Thm.~3]{serre-73} over $\mathbb F_2$ therefore shows that the
stabilizer of $\beta$ has exactly $3$ orbits on the nonzero isotropic
classes, distinguished by the following:
\begin{itemize}
	\item $\alpha=\beta$:
	$I=\langle e_2^+,e_2^-\rangle$ (type $d_{2,1}$).
	\item $\alpha\not\perp\beta$:
	$I=\langle e_2^+,f_2^-\rangle$ (type $d_{2,2}$).
	\item $\alpha\perp\beta$ and $\alpha\neq\beta$:
	$I=\langle e_2^+,2e_1^-+2f_1^-+x_-\rangle$
	(type $d_{2,3}$), where $x_-\in E_8(-2)$ and $x_-^2=-8$.
\end{itemize}
If $\operatorname{div}_{\Lambda^-}(w^-)=1$, then
$[w^-]_{\Lambda^-}=0$ and hence $\alpha=0$. The type-$a_2$ classification
allows us to take $w^-=e_1^-$, giving
\begin{itemize}
	\item $I=\langle e_2^+,e_1^-\rangle $  (type $d_{2,4}$).
\end{itemize}

\vspace{0.5em}
\noindent \textbf{Case $b$: $3$-dimensional isotropic subspaces
	$W=\langle w^+,w_1^-,w_2^-\rangle \subset \Lambda^+\oplus\Lambda^-$.}

Set
\[
\begin{aligned}
I^-&\coloneqq W\cap\Lambda^-
=\langle w_1^-,w_2^-\rangle,\\
P&\coloneqq
\psi^{-1}\!\left(
\left\langle
\left[\frac{w_1^-}{\operatorname{div}(w_1^-)}\right]_{\Lambda^-},
\left[\frac{w_2^-}{\operatorname{div}(w_2^-)}\right]_{\Lambda^-}
\right\rangle
\right)
\subseteq A_{\Lambda^+}.
\end{aligned}
\]
By the type-$d_1$ case, $I^-$
has divisor type $(1,2)$ or $(2,2)$. 
Again, from Lemma~\ref{lem:lattice_properties}(3), we only need to determine the class $[w^+/2]_{\Lambda^+}$.

If $I^-$ has divisor type $(1,2)$, we may take
$I^-=\langle e_2^-,e_1^-\rangle$; then
$P=\langle[e_2^+/2]_{\Lambda^+}\rangle$. There are $3$ orbits:
\begin{itemize}
	\item $[w^+/2]_{\Lambda^+}=[e_2^+/2]_{\Lambda^+}$:
	$w^+=e_2^+$, giving $W_{b_1}$.
	\item $[w^+/2]_{\Lambda^+}\not\perp P$:
	
	We may write $[w^+/2]_{\Lambda^+}=[\frac a2e_2^++\frac 12 f_2^++ \frac 12 x]_{\Lambda^+}$ for some $x\in E_8(-2)$.
	Applying the Eichler transvection $E_{e_2^+,\frac x2}$ reduces this to the case
	$x=0$ ($E_{e_2^+,\frac x2}$ is compatible with $E_{e_2^-,\frac {x'}2}$ as in \eqref{compatible}, which preserves $I^-$). Thus we may assume $[w^+/2]_{\Lambda^+}=[f_2^+/2]_{\Lambda^+}$ since $w^+ $ is isotropic, and
	$w^+=f_2^+$, giving $W_{b_2}$.
	\item $[w^+/2]_{\Lambda^+}\perp P$ and
	$[w^+/2]_{\Lambda^+}\neq[e_2^+/2]_{\Lambda^+}$:
	
	Now $[w^+/2]_{\Lambda^+}\neq0$ since $w^+$ is primitive.
	We may write $[w^+/2]_{\Lambda^+}=[\frac a2e_2^++ \frac 12 x]_{\Lambda^+}$ for some $x\in E_8(-2)$ with $x\notin 2\Lambda^+$.
	Applying $E_{e_2^+,\frac{x'}{2}}$ with $(x,x')=2$, we reduce to $a=0$.
	By Lemma~\ref{lem:lattice_properties}(1) and (3), we reduce to
	$w^+=2e_2^++2f_2^++x_+$, where $x_+^2=-16$, giving $W_{b_3}$.
\end{itemize}

If $I^-$ has divisor type $(2,2)$, take
\[
I^-=\langle e_2^-,u^-\rangle,
\qquad
u^-\coloneqq2e_1^-+2f_1^-+x_-,
\qquad x_-^2=-8.
\]
A similar calculation gives $3$ orbits:
\begin{itemize}
	\item $[w^+/2]_{\Lambda^+}\in P$: $w^+=e_2^+$, giving $W_{b_4}$.
	\item $[w^+/2]_{\Lambda^+}\notin P$ and $[w^+/2]_{\Lambda^+}\not\perp P$:
	$w^+=f_2^+$, giving $W_{b_5}$.
	\item $[w^+/2]_{\Lambda^+}\notin P$ and $[w^+/2]_{\Lambda^+}\perp P$:
	$w^+=2e_2^++2f_2^++x_+$, with $[x_+/2]_{\Lambda^+}$ orthogonal to
	$\psi^{-1}([x_-/2]_{\Lambda^-})$, giving $W_{b_6}$.
\end{itemize}

\vspace{0.5em}
\noindent \textbf{Case $c_1$: flags $\langle w^+\rangle \subset \langle w^+,w_1^-,w_2^-\rangle $ of dimensions $(1,3)$.}

For each $1\leq i\leq6$, the unique type-$c_1$ flag over $W_{b_i}$ is
$W_{b_i}\cap\Lambda^+\subset W_{b_i}$, giving the $6$ classes $c_{1,i}$.

\vspace{0.5em}
\noindent \textbf{Case $c_2$: flags $\langle w_1^-\rangle \subset \langle w^+,w_1^-,w_2^-\rangle $ of dimensions $(1,3)$.}

Suppose first that $I^-=W\cap\Lambda^-$ has divisor type $(1,2)$, so that
$I^-=\langle e_2^-,e_1^-\rangle$. 
From Lemma~\ref{lem:lattice_properties}(3), we know there are $2$ orbits of primitive $w\in I^-$ under
$\widetilde O(\Lambda^-)$, distinguished by the divisor of $w$. Consequently, for each
$i=1,2,3$, there are $2$ flags distinguished by their divisor type:
\[
c_{2,ia}:\ \langle e_1^-\rangle\subsetneq W_{b_i},
\qquad
c_{2,ib}:\ \langle e_2^-\rangle\subsetneq W_{b_i}.
\]

Now suppose that $I^-$ has divisor type $(2,2)$. Use the normalized
basis $I^-=\langle e_2^-,u^-\rangle$ above. 
Choose $x\in \Lambda^-$ such that $(u^-,x)=2$. The Eichler transvections $E_{e_2^-,x}\in \widetilde{O}(\Lambda^-)$ and $E_{f_2^-,u^-} \in \widetilde{O}(\Lambda^-)$ preserve $I^-$ and give
\[
ae_2^-+bu^-
\sim (a+2b)e_2^-+bu^-, \qquad 
ae_2^-+bu^-
\sim ae_2^-+(2a+b)u^-.
\]
This reduces the possible lines
to
\[
\langle e_2^-\rangle,
\qquad \langle u^-\rangle,
\qquad \langle e_2^-+u^-\rangle.
\]

For type $b_4$, we can further apply $E_{e_2^-, \frac x2}$ (compatible with $E_{e_2^+, \frac x2}$, which preserves $e_2^+$) to obtain $u^-\sim e_2^-+u^-$. Thus there are $2$ classes, represented by $w_1^-=u^-$ and $w_1^-=e^-_2$, and they are distinguished by
whether $[w_1^-/2]_{\Lambda^-}=\psi([w^+/2]_{\Lambda^+})$.

For type $b_5$, we can further apply $E_{f_2^-, \frac {u^-}2}$ to obtain $e_2^-\sim e_2^-+u^-$. Thus there are $2$ classes, represented by $w_1^-=u^-$ and $w_1^-=e^-_2$, and they are distinguished by whether
$b_{\Lambda^-}([w_1^-/2]_{\Lambda^-},\psi([w^+/2]_{\Lambda^+}))=0$.

For type $b_6$, both Eichler transvections above apply if we choose
$[u^-/2]_{\Lambda^-}$ orthogonal to $\psi([x_+/2]_{\Lambda^+})$. Thus there is only $1$ class.

Therefore $b_1,\ldots,b_5$ contribute $2$ classes each and $b_6$
contributes $1$, giving exactly $11$ type-$c_2$ classes.

Combining the $7$ cases gives
\[
1+2+6+6+11+2+4=32
\]
pairwise inequivalent rational isotropic flags, corresponding to $32$ Satake boundary components.
\qed
\section{Equivariant Gromov–Hausdorff Limits}

In this section, we specify the $\IZ_2$-action on each corresponding K3 limit.
\subsection{Period of the Flat Kummer Metric}

Let $X = (E_1 \times E_2)/\mathbb{Z}_2$ equipped with a flat orbifold metric $g$, where $E_i = \mathbb{C}/\Lambda_i$ are elliptic curves and the $\mathbb{Z}_2$ action is given by the standard involution $\iota: (z_1, z_2) \mapsto (-z_1, -z_2)$.
Write 
$\Lambda_i=\langle \lambda_i,\mu_i\rangle$. We define the involution:
\[ \sigma: (z_1, z_2) \mapsto (z_1 + \frac 12 \lambda_1, -z_2 + \frac 12 \lambda_2).
\]
One can readily verify that $\sigma$ descends to a fixed-point-free isometry on $X$.

Fix a marking $\phi:H^2(\widetilde{X},\mathbb Z)\to\Lambda$ of the minimal resolution
$\widetilde{X} \to X$ and use it to regard the cohomology classes as elements of
$\Lambda_{\mathbb C}$.

A flat product K\"ahler metric on $X$ has K\"ahler form and holomorphic
$2$-form
\[
\kappa=\frac{\sqrt{-1}}2
\left(dz_1\wedge d\bar z_1+ dz_2\wedge d\bar z_2\right),
\qquad
\omega=dz_1\wedge dz_2.
\]

One has $\sigma^*\kappa=\kappa$ and
$\sigma^*\omega=-\omega$.  Hence
$\kappa\in\Lambda^+_{\mathbb R}$ and
$[\omega]\in\Lambda^-_{\mathbb C}$.
Thus the period of the flat Kummer metric is
\[
(\kappa,[\omega])\in K\Omega_{\mathrm{En}} \quad \text{    with    } \quad	\Phi(\kappa,[\omega])
=(X,g)/\langle\sigma\rangle.\]

\subsubsection{Invariant and anti-invariant Kummer lattices}

Put $T=E_1\times E_2$, let
$q:T\to X=T/\langle\iota\rangle$ be the quotient.  Its singular points
are indexed by $a=(a_1,a_2,a_3,a_4)\in\mathbb F_2^4$ via
\[
p_a=\left(
\frac{a_1\lambda_1+a_2\mu_1}{2},
\frac{a_3\lambda_2+a_4\mu_2}{2}
\right).
\]
Let $k_a\in\Lambda$ be the class of the exceptional curve over $q(p_a)$;
then $(k_a,k_b)=-2\delta_{ab}$.  The Kummer lattice is
\[
\mathcal K
=\left(\bigoplus_{a\in\mathbb F_2^4}\mathbb Zk_a\right)_{\!\mathrm{sat}}
=\Lambda\cap\operatorname{span}_{\mathbb Q}\{k_a\}.
\]
It is generated by the $k_a$ and
$\frac12\sum_{a\in H}k_a$ for affine hyperplanes
$H\subset\mathbb F_2^4$; see \cite[Ch.~VIII, \S5]{barth-cptcplxsurface-04}.

Let $\theta_1,\ldots,\theta_4$ be dual to
$(\lambda_1,\mu_1,\lambda_2,\mu_2)$, and
write $e_{ij}$ for the image of $\theta_i\wedge\theta_j$ under
$H^2(T,\mathbb Z)(2)\hookrightarrow\Lambda$.  Then
\[
\begin{aligned}
	\mathcal K^\perp
	&=\langle e_{12},e_{34}\rangle
	\oplus\langle e_{13},-e_{24}\rangle
	\oplus\langle e_{14},e_{23}\rangle
	\cong U(2)^{\oplus3},\\
	\Lambda_{\mathbb Q}
	&=\mathcal K_{\mathbb Q}\oplus\mathcal K^\perp_{\mathbb Q}.
\end{aligned}
\]
Each displayed pair has intersection number $2$, and
$[\Lambda:\mathcal K\oplus\mathcal K^\perp]=2^6$.

Choose $\alpha_i=\lambda_i/2$ and set $\delta=(1,0,1,0)$.  The affine lift of
$\sigma$ acts on the $\theta_i$ with signs $(+,+,-,-)$ and sends
$p_a$ to $p_{a+\delta}$.  Hence
\[
\begin{aligned}
	&\sigma^*e_{12}=e_{12},\qquad \sigma^*e_{34}=e_{34},\\
	&\sigma^*e_{ij}=-e_{ij}
	\quad(i\in\{1,2\},\ j\in\{3,4\}),\\
	&\sigma^*k_a=k_{a+\delta}.
\end{aligned}
\]
Writing $\Lambda^\pm=\{v\in\Lambda:\sigma^*v=\pm v\}$, and choosing
$1$ representative from each orbit of $a\mapsto a+\delta$ to form $R$,
we obtain
\[
\begin{aligned}
	\Lambda^+_{\mathbb Q}
	&=\operatorname{span}_{\mathbb Q}
	\{e_{12},e_{34},k_a+k_{a+\delta}:a\in R\},\\
	\Lambda^-_{\mathbb Q}
	&=\operatorname{span}_{\mathbb Q}
	\{e_{13},e_{14},e_{23},e_{24},k_a-k_{a+\delta}:a\in R\}.
\end{aligned}
\]
The integral eigensublattices are the intersections of these spaces with
$\Lambda$.  As recalled in Section~2,
\[
\Lambda^+\cong U(2)\oplus E_8(-2),
\qquad
\Lambda^-\cong U\oplus U(2)\oplus E_8(-2),
\]
so, for $A_M=M^\vee/M$,
$A_{\Lambda^+}\cong A_{\Lambda^-}\cong(\mathbb Z/2\mathbb Z)^{10}$.
As in Section~4, unimodularity of $\Lambda$ gives the anti-isometry
\[
\psi:(A_{\Lambda^+},q_{\Lambda^+})
\xrightarrow{\sim}(A_{\Lambda^-},-q_{\Lambda^-}),
\]
characterized by
\[
\psi([v_+])=[v_-]
\quad\Longleftrightarrow\quad
v_++v_-\in\Lambda,
\qquad v_\pm\in(\Lambda^\pm)^\vee.
\]

For $i<j$, set
\[
S_{ij}=\sum_{a_i=a_j=0}k_a.
\]
The proper-transform formula \cite[p.~319]{barth-cptcplxsurface-04} gives
\[
g_{ij}:=\frac{e_{ij}-S_{ij}}2\in\Lambda.
\]
This is precisely the Poincar\'e dual class of the proper transform of the
corresponding coordinate $2$-torus.  Since $\sigma$ translates the indices by
$\delta=(1,0,1,0)$,
\[
S_{23}+\sigma^*S_{23}
=S_{12}+\sigma^*S_{12}=\sum_{a_2=0}k_a.
\]
Thus $\sigma^*(S_{23}-S_{12})=-(S_{23}-S_{12})$, and hence
$e_{23}+S_{23}-S_{12}\in\Lambda^-$.  Moreover,
\[
\frac{e_{12}}2+\frac{e_{23}+S_{23}-S_{12}}2
=g_{12}+g_{23}+S_{23}\in\Lambda.
\]
The $2$ summands lie in $(\Lambda^+)^\vee$ and $(\Lambda^-)^\vee$, respectively; therefore,
\[
\psi\left(\left[\frac{e_{12}}2\right]\right)
=\left[\frac{e_{23}+S_{23}-S_{12}}2\right].
\]
The same argument, with $(e_{12},e_{23},S_{12},S_{23})$ replaced by
$(e_{34},e_{14},S_{34},S_{14})$, gives
\[
\psi\left(\left[\frac{e_{34}}2\right]\right)
=\left[\frac{e_{14}+S_{14}-S_{34}}2\right].
\]

We also note that $g_{24}\in \Lambda^-$; thus $e_{13}$ has divisor $1$, while the other $e_{ij}$ have divisor $2$ in $\Lambda^\pm$.

\subsection{Degenerations of Kummer Orbifolds} \label{section-kummer-example}
Here we consider degenerations of this Kummer orbifold in various dimensions as parameters approach zero. We write $\CM(a_{i,j})$ for the corresponding Satake boundary components below.

\vspace{0.5em}

\noindent\textbf{3-Dimensional Collapsing:}
\begin{enumerate}
	\item \textbf{$\lambda_1 \to 0$:} Write
	$z_1=x_1\lambda_1+y_1\mu_1$, with $x_1,y_1\in\mathbb R/\mathbb Z$.
	After the $\lambda_1$-circle collapses,
	\[
	\begin{aligned}
		X_\infty
		&=\bigl((\mathbb R/\mathbb Z)_{y_1}\times E_2\bigr)
		/\langle\iota_\infty\rangle\cong T^3/\IZ_2,\\
		\iota_\infty(y_1,z_2)&=(-y_1,-z_2),\\
		\sigma_\infty[y_1,z_2]
		&=[y_1,-z_2+\tfrac12\lambda_2].
	\end{aligned}
	\]
	The class $[y_1,z_2]$ is fixed iff
	$(y_1,-z_2+\lambda_2/2)$ equals $(y_1,z_2)$ or
	$(-y_1,-z_2)$.  The latter would imply $\lambda_2/2=0$ in $E_2$;
	hence only the former occurs, giving
	\[
	2z_2=\frac{\lambda_2}{2},\qquad
	z_2=\frac{\lambda_2}{4}
	+\frac{\varepsilon\lambda_2+\eta\mu_2}{2},
	\quad \varepsilon,\eta\in\{0,1\}.
	\]
	The $4$ solution circles are paired by $\iota_\infty$ according to
	$\varepsilon=0,1$.  Thus
	\[
	\operatorname{Fix}(\sigma_\infty)
	=S^1\sqcup S^1.
	\]
	Here
	$(w,e_{23},e_{24},e_{34})\to0$, so the limit lies in $\CM(b_5)$.
	\item \textbf{$\mu_1 \to 0$:} A similar calculation shows that the limit exhibits a free action on $T^3/\mathbb{Z}_2$.
	We have $(w, e_{13}, e_{14}, e_{34}) \to 0$, and the limit lies in $\CM(b_3)$.
\end{enumerate}

\noindent\textbf{2-Dimensional Collapsing:}
\begin{enumerate}
	\item \textbf{$\mu_1, \mu_2 \to 0$:} The limit space is $\mathbb{R}P^2$.
	Here $(w, e_{13}) \to 0$; if we further assume $\mu_1\sim \mu_2$, then the period limit lies in $\CM(a_{2,1})$.
	\item \textbf{$\lambda_1, \lambda_2 \to 0$:} The limit space is a square.
	Here $(w, e_{24}) \to 0$; if $\lambda_1\sim \lambda_2$, then the period limit lies in $\CM(a_{2,2})$.
	\item \textbf{$\lambda_1, \mu_1 \to 0$:} The limit space is $\mathbb{P}^1$. Here $(w, e_{34}) \to 0$; if $\lambda_1\sim \mu_1$, then the period limit lies in $\CM(a_{1,1})$.
\end{enumerate}

\noindent\textbf{1-Dimensional Collapsing with limit $I^1$:}
\begin{enumerate}
	\item \textbf{$\lambda_1, \lambda_2, \mu_1 \to 0$:} We have $(w, e_{14}, e_{24}, e_{34}) \to 0$, corresponding to $\CM(b_6)$.
	\item \textbf{$\lambda_2, \mu_1, \mu_2 \to 0$:} We have $(w, e_{13}, e_{14}, e_{12}) \to 0$, corresponding to $\CM(b_2)$.
\end{enumerate}

This construction covers every point of $\CM(b_2)$, $\CM(b_3)$, $\CM(b_5)$, and $\CM(b_6)$ and special points of $\CM(a_{1,1})$, $\CM(a_{2,1})$, and $\CM(a_{2,2})$, thereby determining their Gromov--Hausdorff limits.

For the $\CM(b_i)$ and $\CM(c_{i,j})$ boundary components, let $I_3$ be the corresponding primitive lattice of rank $3$ in $\Lambda_{K3}$. Then the dimension of the GH limit is determined by the type of $(I_3)^\perp/I_3$, which can be $(-E_8)^2$ or $-\Gamma_{16}$, with $(-E_8)^2$ corresponding to an $I^1$ limit. 
For $\CM(b_1)$, the quotient $W^\perp/W$ is $E_8(-2)\oplus E_8(-2)$ inside $L^+\oplus L^-$, hence $(-E_8)^{\oplus2}$ in $L$.
Consequently, the geometric limit is $I^1/\IZ_2$ and remains $I^1$ after scaling.

\subsection{$\mathbb{Z}_2$ Action on Generalized K3 Metrics} \label{section-5.3}

\begin{lemma}\label{lem:involution_types}
	Every conformal involution $\sigma$ of $\IP^1=\IC\cup \{\infty\}$ is of one of the following types.
	
	\textbf{Holomorphic:}
	\begin{enumerate}
		\item $\sigma = \mathrm{Id}$.
		\item $\sigma$ has exactly $2$ fixed points. Locally, it acts as a $180^\circ$ rotation, and the quotient $X/\sigma$ is homeomorphic to $\mathbb{P}^1$.
	\end{enumerate}
	
	\textbf{Anti-holomorphic:}
	\begin{enumerate}
		\setcounter{enumi}{2}
		\item The fixed locus is an embedded $S^1$. $\sigma$ acts as a reflection near this locus, and $X/\sigma \simeq \mathbb{D}^2$.
		\item There is no fixed locus, and $X/\sigma \simeq \mathbb{R}P^2$.
	\end{enumerate}
\end{lemma}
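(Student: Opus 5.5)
We plan to reduce the statement to the classification of involutions in the extended Möbius group. A conformal diffeomorphism of $\IP^1$ either preserves the orientation, in which case it is a Möbius transformation $z\mapsto \frac{az+b}{cz+d}$, or reverses it, in which case it is the composition of such a map with $z\mapsto\bar z$. We treat the two cases separately. In each case we first normalize $\sigma$ by a Möbius conjugation, and then read off the fixed locus and the quotient. Conjugation does not change the homeomorphism type of $\IP^1/\sigma$, so this is enough.

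\textbf{Holomorphic case.} Suppose $\sigma\neq\mathrm{Id}$. Lift $\sigma$ to a matrix $M\in SL_2(\IC)$. From $\sigma^2=\mathrm{Id}$ we get $M^2=\pm I$. If $M^2=I$ and $M\ne\pm I$, then $M$ has eigenvalues $1,-1$, and these multiply to $-1$, which contradicts $\det M=1$. Hence $M^2=-I$, so $M$ is diagonalizable with eigenvalues $\pm i$. After conjugation, $\sigma(z)=-z$. Its fixed points are exactly $0$ and $\infty$, and near each of them $\sigma$ is a rotation by $180^\circ$. The map $z\mapsto z^2$ identifies $\IP^1/\sigma$ with $\IP^1$. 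This gives types (1) and (2).

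\textbf{Anti-holomorphic case.} Write $\sigma(z)=\frac{a\bar z+b}{c\bar z+d}$. The condition $\sigma^2=\mathrm{Id}$ becomes $M\bar M=\lambda I$ for some $\lambda$, and $\lambda$ is real because $M\bar M$ commutes with its own conjugate. After rescaling $M$ we may take $\lambda=\pm1$. The key step, which we expect to be the main obstacle, is to show that every such $\sigma$ is Möbius-conjugate either to $z\mapsto\bar z$ or to $z\mapsto-1/\bar z$.

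We plan a geometric argument for this step, avoiding the Galois-cohomology style matrix normalization. First suppose $\sigma$ has a fixed point $p$. Then $\sigma$ is an orientation-reversing involution, so near $p$ it is conjugate to a reflection, and its fixed set is a $1$-manifold. Conjugate so that $p=\infty$ and $\sigma(\infty)=\infty$. Then $\sigma(z)=\alpha\bar z+\beta$. The condition $\sigma^2=\mathrm{Id}$ forces $|\alpha|=1$ and $\alpha\bar\beta+\beta=0$. The fixed set is then a line through $\infty$, that is, a circle in $\IP^1$, and one more affine conjugation gives $z\mapsto\bar z$. This is type (3): the fixed locus is an embedded $S^1$, $\sigma$ acts as a reflection near it, and the quotient is the closed upper half-plane compactified, which is $D^2$.

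If $\sigma$ has no fixed point, the quotient map $\IP^1\to\IP^1/\sigma$ is a free $\IZ_2$-quotient of $S^2$. The quotient is therefore a closed surface with Euler characteristic $1$, hence $\IR\IP^2$. This is type (4), and the example $z\mapsto-1/\bar z$ (the antipodal map) shows that this case occurs.

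Finally we check that the four types are mutually exclusive and exhaustive. They are distinguished by the fixed-point set: all of $\IP^1$, two points, a circle, or empty. Exhaustiveness follows from the holomorphic/anti-holomorphic dichotomy above.
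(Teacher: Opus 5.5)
Your argument is correct, but it is organized differently from the paper's. The paper uses one normalization for both cases. Since $\sigma\neq\mathrm{Id}$ (automatic when $\sigma$ is anti-holomorphic), some point $p$ is moved and $\sigma$ swaps $p$ and $\sigma(p)$. Conjugating these two points to $0$ and $\infty$ forces $\sigma(z)=1/(az)$ or $\sigma(z)=1/(a\bar z)$. In the anti-holomorphic case $\sigma^2=\mathrm{Id}$ forces $a\in\IR$, and a dilation reduces everything to the three models $1/z$, $1/\bar z$ and $-1/\bar z$, from which the fixed loci and quotients are read off.

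You instead use the eigenvalue argument in $SL_2(\IC)$ for the holomorphic case. You then split the anti-holomorphic case by whether $\sigma$ has a fixed point. If it does, you place it at $\infty$ and obtain an affine reflection $z\mapsto\alpha\bar z+\beta$. If it does not, you argue purely topologically: a free quotient of $S^2$ is a closed surface with $\chi=1$, hence $\mathbb{R}P^2$.

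What your route buys is a more explicit verification of the fixed-locus and quotient claims, which the paper leaves implicit in its normal forms. It also settles the free case without any normalization. What it loses is the normal form in the free case. You announced that you would show conjugacy to $-1/\bar z$, but your Euler-characteristic argument never establishes it. The statement of the lemma does not require it, but the paper relies on it later: in the proof of Proposition~\ref{prop:constant_topo_type} it writes $\sigma_i=f_i\circ\sigma_0\circ f_i^{-1}$ with $\sigma_0(z)=-1/\bar z$. The swap-$0$-and-$\infty$ trick gives that conjugacy immediately and treats both anti-holomorphic subcases uniformly, the sign of $a$ separating them.

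A minor point: your reason for $\lambda\in\IR$ is a non sequitur, since a scalar matrix trivially commutes with its conjugate. The correct argument is that $M(\bar M M)=(M\bar M)M$ gives $\bar\lambda M=\lambda M$. You never use $\lambda$ afterwards, so this does not affect your proof.
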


\begin{proof}
	\begin{enumerate}
		\item \textbf{Holomorphic Case:} 
		Assume $\sigma\neq \mathrm{Id}$.
		By applying a suitable M\"obius transformation, we may assume $\sigma$ exchanges $0$ and $\infty$. Then $\sigma =\frac{1}{az}$ and is conjugate to $\sigma=\frac{1}{z}$.
		\item \textbf{Anti-holomorphic case.} 
		Assume $\sigma$ exchanges $0$ and $\infty$.
		Thus $\sigma(z) = \frac{1}{a\bar{z}}$ for some $a \in \mathbb{C}^*$, and 
		$\sigma^2 = \mathrm{Id}$ implies $a \in \mathbb{R}$.
		After further conjugation by a dilation, $\sigma(z)$ is conjugate to $\pm \frac{1}{\bar{z}}$.

	\end{enumerate}
\end{proof}

Recall that a $2$-dimensional K3 GH limit is a generalized K\"ahler--Einstein metric on $\IP^1$, and such a metric determines the complex structure on $\IP^1$ up to conjugation. Therefore, every isometric involution on a $2$-dimensional K3 GH limit has one of these $4$ types. We now treat the $\CM(a_{i,j})$ cases of Theorem~\ref{thm-quotient-type}.

\begin{proposition} \label{prop:constant_topo_type}
	In each connected boundary component $\CM(a_{i,j})\subset \overline{\mathcal{M}_{\mathrm{En}}}$, where $(i,j)\in\{(1,1),(2,1),(2,2)\}$, the involution $\sigma$ on $\mathbb{P}^1$ has a constant type given by Lemma~\ref{lem:involution_types} and is thus completely determined by the Kummer examples in the previous section.
\end{proposition}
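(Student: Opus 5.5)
The plan is a connectedness-plus-discreteness argument, mirroring the proof of Theorem~\ref{thm:continuous-non-collapsing}. Fix one of the components $\CM(a_{i,j})$. It is the image of a connected locally symmetric space $G_{A,\IZ}\backslash G_A/K_A$: for $a_1$ one has $G_A/K_A\cong SO(2,10)/(SO(2)\times SO(10))$ (up to the compact factor), and for $a_2$ one has $G_A/K_A\cong SO(1,9)/SO(9)\times SO(1,9)/SO(9)$. In either case the component is connected. Over each point $y$ of the component, the K3 limit $\Phi_{K3}(y)$ is a generalized K\"ahler--Einstein metric $g_y$ on $\IP^1$, namely the McLean/special K\"ahler metric attached to the corresponding elliptic K3. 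The map $y\mapsto(\IP^1,g_y)$ is continuous in the Gromov--Hausdorff sense by the K3 theory of \cite{ouyang2025compactificationmetricmodulispace}, and the extended map $\overline{\Phi}$ gives the Enriques limit $(\IP^1,g_y)/\sigma_y$.

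\textbf{Step 1: the type is locally constant.} Let $y_k\to y$ in the component. Choose isometries realizing the convergence $(\IP^1,g_{y_k})\to(\IP^1,g_y)$, which we may take as $\epsilon_k$-GH approximations. By Arzel\`a--Ascoli, as in Lemma~\ref{lem:limit_quotient}, a subsequence of the involutions $\sigma_{y_k}$ converges to an isometric involution $\sigma'$ of $(\IP^1,g_y)$. Continuity of $\overline{\Phi}$ gives $(\IP^1,g_y)/\sigma'\cong(\IP^1,g_y)/\sigma_y$.

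\textbf{Step 2: the type is a topological invariant of the quotient.} By Lemma~\ref{lem:involution_types}, the four types give quotients $\IP^1$ (with two cone points of angle $\pi$, or the double of $\IP^1$ when $\sigma=\mathrm{Id}$), $D^2$, and $\IR\IP^2$. These are distinguished topologically, or metrically in the holomorphic cases: the identity quotient has $\mathcal H^2$-area twice that of the rotation quotient. The case $\sigma=\mathrm{Id}$ can be excluded for $a_{i,j}$ strata, since the lift of the Enriques involution acts on the base nontrivially; otherwise $\sigma$ would act fiberwise and the quotient would be 2-dimensional with the wrong volume normalization. So Step 1 shows that the type of $\sigma_{y_k}$ agrees with that of $\sigma_y$ for large $k$. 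The reason is that a sequence of involutions converging to $\sigma'$ has fixed loci that, in the limit, can only merge in a controlled way. More robustly, the homeomorphism type of the quotient varies continuously. Here I would use that GH limits of 2-dimensional Alexandrov-type spaces with curvature bounded below away from finitely many cone points are homeomorphic, via Perelman stability applied away from the singular fibers. Because of these singular fibers, this is the step I expect to be the main obstacle. I would try first to avoid stability entirely and instead argue through holomorphicity. Since $g_y$ determines the complex structure on $\IP^1$ up to conjugation, an isometric involution is holomorphic or antiholomorphic according to whether it preserves the orientation. Orientation preservation is an open and closed condition under uniform convergence of maps of $S^2$ (it is the degree, $\pm1$). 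Within each class, the rotation and identity are separated by the number of fixed points. Similarly, the fixed-circle and free antiholomorphic cases are separated by the Lefschetz number ($\mathrm{tr}\,\sigma_*$ on $H_*$, giving $0$ versus $0$ fails). I would therefore use instead the existence of fixed points: a limit of fixed-point-free involutions with uniform displacement bound is fixed-point-free. Uniform displacement holds because $\min d(x,\sigma x)$ is continuous under equivariant GH convergence.

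\textbf{Step 3: conclusion.} Local constancy on the connected component gives a constant type. The Kummer degenerations of Section~\ref{section-kummer-example} supply a point in each of $\CM(a_{1,1})$, $\CM(a_{2,1})$, $\CM(a_{2,2})$, with quotients $\IP^1$, $\IR\IP^2$, and a square ($\cong D^2$) respectively. These points determine the type on each whole component.
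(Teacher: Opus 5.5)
Your framework matches the paper's: the paper writes $\CM(a_{i,j})$ as the union of the four type-loci $A_k$, proves each is closed, and uses connectedness plus the Kummer points. The gap is the step that carries all the content, namely that the type cannot jump in a limit.

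Degree does separate holomorphic from anti-holomorphic, and limits of fixed points are fixed. So the only dangerous degenerations are a rotation converging to the identity, and a free anti-holomorphic involution converging to a reflection.

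\begin{itemize}
\item \textbf{Free to reflection.} You claim a uniform displacement bound ``because $\min d(x,\sigma x)$ is continuous under equivariant GH convergence''. This is circular. Continuity only gives $\min_x d(x,\sigma'x)=\lim_k\min_x d(x,\sigma_{y_k}x)$, and showing that this limit is positive is exactly what is required. Nothing you write prevents the displacements of the free involutions from tending to $0$.
\item \textbf{Rotation to identity.} ``Separated by the number of fixed points'' does not stop the fixed set from jumping up in the limit.
\item \textbf{Excluding $\sigma=\mathrm{Id}$.} Your global exclusion (``wrong volume normalization'') is unsubstantiated. At a boundary point there is no Enriques surface with an elliptic fibration for the fiberwise argument to act on. The exclusion is also unnecessary: the identity locus is trivially closed, and the Kummer points rule it out. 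What you actually need is that rotations cannot converge to the identity.
\end{itemize}

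The missing ingredient is the rigidity of conformal involutions of $\IP^1$. Since $g_{y_k}\to g_y$ smoothly away from finitely many points (Weierstrass description), the conformal structures converge. The involutions then become (anti-)M\"obius maps $z\mapsto M_kz$ or $z\mapsto M_k\bar z$ converging uniformly to $\sigma'$. The paper argues directly from this in two cases.

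\begin{itemize}
\item \textbf{Rotations.} A rotation has a fixed point with differential $-\mathrm{Id}$. This survives in the limit as an isolated fixed point, so the limit is not the identity.
\item \textbf{Free anti-holomorphic involutions.} Write $\sigma_{y_k}=f_k\circ\sigma_0\circ f_k^{-1}$ with $\sigma_0(z)=-1/\bar z$. An explicit matrix computation shows that if the limit fixed $0$, the matrices would degenerate so that $\sigma_{y_k}(1)\to0$ as well. This contradicts injectivity of the limit.
\end{itemize}

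An equivalent route: uniform convergence to a M\"obius homeomorphism forces the normalized matrices $\pm M_k\in SL_2(\IC)$ to converge. The conditions $\operatorname{tr}M=0$ for nontrivial holomorphic involutions, and $M\bar M=-I$ versus $M\bar M=I$ for free versus reflecting anti-holomorphic ones, are closed. Some argument of this kind is needed before your Step~2 yields local constancy.
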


\begin{proof}
	
	We can decompose $\CM(a_{i,j}) = \bigcup_{k=1}^4 A_k$, where $A_k$ denotes the locus of metrics exhibiting type $k$. Because $\CM(a_{i,j})$ is connected, we need only show that each $A_k$ is closed.

	Suppose $\xi_i \to\xi_\infty$ in $\CM(a_{i,j})$. Let $f(\xi_i)$ be its image in $\overline{\CM}_{K3}^{\mathrm{Sat,ad}}$, and let $(\IP^1,g_i)= \Phi_{K3}(\xi_i)$. From the Weierstrass description in \cite{ouyang2025compactificationmetricmodulispace}, we may assume that $g_i$ converges to $g_\infty$ outside a finite set of points. Thus, after passing to a subsequence, we may assume that the involution $\sigma_i$ converges to an involution $\sigma_\infty$ on $\IP^1$.

	It suffices to prove the following claim.
	
	\textbf{Claim:}
	If all $\sigma_i$ are of type $k$, then $\sigma_\infty$ is also of type $k$.

	If $\sigma_i$ is of type (1), then the conclusion holds trivially. If $\sigma_i$ is of type (2), consider a sequence of fixed points $p_i \to p_\infty$. If the tangent map converges as $-\mathrm{Id} \to -\mathrm{Id}$, then the limit has an isolated singularity. A similar argument applies to type (3). It remains to deal with the case in which $\sigma_i$ is of type (4).

	Assume to the contrary that the limit $\sigma_\infty$ is not of type (4). Then $\sigma_\infty$ must be of type (3).
	We may write each $\sigma_i$ in our sequence as $\sigma_i = f_i \circ \sigma_0 \circ f_i^{-1}$, where $\sigma_0(z) = -\frac{1}{\bar{z}}$.
	Representing this in $PGL_2(\mathbb{C})$, let $f_i(z)=\frac{az+b}{cz+d}$ correspond to the matrix $A = \begin{pmatrix} a & b \\ c & d \end{pmatrix}$.
	The conjugated involution is then given by:
	\begin{equation*}
		\sigma_i = A \begin{pmatrix} 0 & -1 \\ 1 & 0 \end{pmatrix} \bar{A}^{-1} = \begin{pmatrix} b\bar{d} + a\bar{c} & -|b|^2 - |a|^2 \\ |d|^2 + |c|^2 & -b\bar{d} - c\bar{a} \end{pmatrix}.
	\end{equation*}
	Assume that $\sigma_i \to \sigma_\infty$ and $\sigma_\infty(0)=0$.
	Then, for arbitrarily large $N > 0$, as $i \to \infty$, either $|b\bar{d}| \geq N(|b|^2+|a|^2)$ or $|c\bar{a}| \geq N(|b|^2+|a|^2)$.
	
	We assume the first case holds (the second follows by an identical symmetric argument).
	Under this assumption, we have:
	\begin{equation*}
		|d| > 2N \left( |b| + \left| \frac{a^2}{b} \right| \right) \geq N(|b| + |a|).
	\end{equation*}
	Squaring this and examining the lower-left entry of $\sigma_i$, we find:
	\begin{equation*}
		|d|^2 + |c|^2 = \frac{1}{2}|d|^2 + \frac{1}{2}|d|^2 + |c|^2 \geq \frac{N}{2}|bd|
		+ \frac{N^2}{2}|a|^2 + |c|^2 \geq \frac{N}{2}(|bd| + |ac|).
	\end{equation*}
	This implies that as $i \to \infty$, the lower-left term dominates, forcing $\sigma_i(1) \to 0$.
	However, this contradicts the injectivity of the limiting map $\sigma_\infty$.
	
	Thus, the type of the involution cannot jump in the limit, and the proposition is proved.
\end{proof}

For $\CM(c_{i,j})$, we know that it lies in the closure of $\CM(b_j)$ and $\CM(a_{i,k})$.
Let $I_{c_{i,j}}$ be the corresponding primitive lattice of rank $3$ in $\Lambda_{K3}$. Then the GH limit is $I^1$ or a $\IZ_2$-quotient of $\IP^1$ and depends on whether $(I_{c_{i,j}})^\perp/I_{c_{i,j}}$ is isomorphic to $(-E_8)^2$ or $-\Gamma_{16}$. In the $\IP^1$ case, choose $\xi_i$ in $\CM(a_{\ell,m})$ converging to $\xi_\infty$ in $\CM(c_{i,j})$.
From \cite[Appendix]{ouyang2025compactificationmetricmodulispace}, we can still derive the smooth convergence of generalized KE metrics outside finitely many points. Thus Proposition~\ref{prop:constant_topo_type} applies to give the $\IZ_2$-quotient type. In particular, the limit space 
 is of the same type as in $\CM(a_{i,k})$.

\subsection{3-Dimensional Collapse over $\CM(b_4)$}\label{section-5.4}

Put
\[
u^-:=2e_1^-+2f_1^-+x_-,\qquad x_-^2=-8,
\]
so that
$W_{b_4}=\langle e_2^-,u^-,e_2^+\rangle_{\mathbb Q}$.
The compatibility of the gluing
$\Lambda^+\oplus\Lambda^-\subset\Lambda$ gives
$\psi([e_2^+/2])=[e_2^-/2]$, and hence
$(e_2^++e_2^-)/2\in\Lambda$.  Moreover, the sublattice
\begin{equation*}
	I_{b_4}:=W_{b_4}\cap \Lambda_{K3}=\left\langle
	e_2^+,\ \frac{e_2^++e_2^-}{2},\ u^-
	\right\rangle_{\mathbb Z} \subset \Lambda_{K3},
\end{equation*}
is primitive in $\Lambda_{K3}$. That is, $I_{b_4}$ contains all the integral points in $W_{b_4}$.

Recall that along the $\CM(b_i)$ boundary components, the corresponding K3 metrics collapse to
dimension $1$ or $3$, depending on whether $I_{b_i}^\perp/I_{b_i}$ is isomorphic to $-\Gamma_{16}$ or $(-E_8)^2$. In our case, it is easy to see that $I_{b_4}^\perp/I_{b_4}\cong I_{b_5}^\perp/I_{b_5}$, and we know that it corresponds to a $3$-dimensional limit from the Kummer example in the previous section.

Suppose $(\kappa_i, [\omega_i])$ represents a sequence of points converging to $\CM(b_4)$. After choosing a suitable marking for $H^2(X,\IZ)$, the limiting $T^3/\IZ_2$ is given by the inner products of $\kappa_i$ and $\omega_i$ with the $3$ primitive basis vectors of $I_{b_4}$. The torus $T^3=\IR^3/\langle v_1,v_2,v_3\rangle$ is given by
\begin{equation}\label{eq:b4-torus-periods}
	\begin{pmatrix} v_1 \\ v_2 \\ v_3 \end{pmatrix}
	=
	\begin{pmatrix}
		2x & 0 & 0 \\
		x  & \ast & \ast \\
		0  & \ast & \ast
	\end{pmatrix}.
\end{equation}

Conversely, all the $T^3/\IZ_2$ spaces of the above form appear as K3 limits in the Enriques boundary $b_4$.

\begin{proposition}\label{prop:T3_Z2_quotients}
	Let $T^3=\mathbb R^3/\Lambda_{T^3}$, where
	$\Lambda_{T^3}=\langle v_1,v_2,v_3\rangle_{\mathbb Z}$ has period
	matrix \eqref{eq:b4-torus-periods}, and put
	$X=T^3/\mathbb Z_2$, where the nontrivial element acts by
	$\iota[x]=[-x]$.  Assume that
	$T^3$ is general in this family, so that
	\[
	O(\Lambda_{T^3})=\{\pm I,\pm R\},
	\]
	where $R$ is the linear involution determined by
	\[
	R(v_1)=v_1,\qquad R(v_2)=v_1-v_2,\qquad R(v_3)=-v_3.
	\]
	Set $V:=\Lambda_{T^3}/2\Lambda_{T^3}$.  For
	$\lambda\in\Lambda_{T^3}$, we also write $\lambda$ for its class in $V$
	and define
	\[
	\tau_{\lambda}[x]
	=\left[x+\frac{\lambda}{2}\right],\qquad
	\eta_{\lambda}[x]
	=\left[Rx+\frac{\lambda}{2}\right].
	\]
	Then every isometry $\sigma$ of $X$ satisfying $\sigma^2=\mathrm{Id}$
	is exactly one of
	\[
	\tau_{\lambda}\quad(\lambda\in V),
	\qquad
	\eta_{\lambda}\quad
	\left(\lambda\in
	\langle v_1,v_3\rangle_{\mathbb F_2}\right).
	\]
	Up to conjugacy in
	$\operatorname{Isom}(X)$, there are $7$ nontrivial involutions, represented
	by the following list, together with the identity:
	\begin{enumerate}
		\item $\eta_{v_3}$, with
		$\eta_{v_3}\sim\eta_{v_1+v_3}$;
		\item $\eta_0$, with $\eta_0\sim\eta_{v_1}$;
		\item $\tau_{v_1}$;
		\item $\tau_{v_3}$;
		\item $\tau_{v_1+v_3}$;
		\item $\tau_{v_2}$, with
		$\tau_{v_2}\sim\tau_{v_1+v_2}$;
		\item $\tau_{v_2+v_3}$, with
		$\tau_{v_2+v_3}\sim\tau_{v_1+v_2+v_3}$.
		\item $\tau_0=\mathrm{Id}$.
	\end{enumerate}
\end{proposition}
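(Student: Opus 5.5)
The plan is to lift every isometry of $X=T^3/\mathbb Z_2$ to an affine isometry of $\mathbb R^3$, classify the lifts that descend and square to the identity, and then compute conjugacy classes by linear algebra over $\mathbb F_2$ on $V=\Lambda_{T^3}/2\Lambda_{T^3}$.

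\textbf{Step 1 (lifting).} The singular set of $X$ is the image of $\tfrac12\Lambda_{T^3}$, and an isometry $\sigma$ of $X$ preserves it. On the regular part $X^{\mathrm{reg}}$, $\sigma$ is a local isometry of a flat manifold. Since $X$ is a flat orbifold whose orbifold universal cover is $\mathbb R^3$ with orbifold fundamental group $\Gamma=\Lambda_{T^3}\rtimes\{\pm1\}$, $\sigma$ lifts to an isometry $\tilde\sigma(x)=Ax+b$ of $\mathbb R^3$ that normalizes $\Gamma$.

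I expect this step to be the main obstacle, namely making the orbifold covering argument rigorous. I would first try the following route. Remove the $8$ singular points; the complement of the lifted discrete set in $\mathbb R^3$ is simply connected because the codimension is $3$. Apply ordinary covering theory there, and extend the lift across the removed points by continuity.

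\textbf{Step 2 (descending lifts).} Normalizing $\Lambda_{T^3}$ forces $A\in O(\Lambda_{T^3})=\{\pm I,\pm R\}$. Normalizing $\Gamma$ also requires $-(Ax+b)\equiv A(-x)+b$ modulo $\Lambda_{T^3}$, that is, $2b\in\Lambda_{T^3}$. Since $-I$ induces $\iota$, which is trivial on $X$, we may take $A\in\{I,R\}$ and $b=\lambda/2$ with $\lambda\in V$. This gives exactly the maps $\tau_\lambda$ and $\eta_\lambda$.

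These maps are pairwise distinct on $X$. Two lifts induce the same map precisely when they differ by an element of $\Gamma$, and $I$ and $R$ are not related by a sign.

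\textbf{Step 3 (which are involutions).} We have $\tau_\lambda^2=\tau_{2\lambda}=\mathrm{Id}$ for every $\lambda$. For the reflections, $\eta_\lambda^2[x]=[x+(R\lambda+\lambda)/2]$. This map cannot equal $[-x]$, so it is the identity exactly when $(R+I)\lambda\in2\Lambda_{T^3}$.

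On $V$ the map $R+I$ acts by $v_1\mapsto 2v_1\equiv0$, $v_2\mapsto v_1$ and $v_3\mapsto 0$. Its kernel is $\langle v_1,v_3\rangle_{\mathbb F_2}$, which gives the stated list of involutions.

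\textbf{Step 4 (conjugacy).} Conjugation by the group generated by the $\tau_\mu$ and $\eta_\mu$ acts as follows:
\[
\tau_\mu\eta_\lambda\tau_\mu^{-1}=\eta_{\lambda+(I-R)\mu},\qquad
\eta_\mu\tau_\lambda\eta_\mu^{-1}=\tau_{R\lambda}.
\]
The translation parts of these formulas are read modulo $2\Lambda_{T^3}$.

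On $V$, the image $(I-R)V$ is spanned by $(I-R)v_2=2v_2-v_1\equiv v_1$. Hence $\eta_\lambda\sim\eta_{\lambda+v_1}$, and conjugating $\eta_\lambda$ by some $\eta_\mu$ gives nothing new. This leaves the two classes $\eta_0$ and $\eta_{v_3}$. They are inequivalent because the class of $\lambda$ modulo $\langle v_1\rangle$ is invariant under all the conjugations above.

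The action of $R$ on $V$ fixes $0,v_1,v_3,v_1+v_3$. It swaps $v_2\leftrightarrow v_1+v_2$ and $v_2+v_3\leftrightarrow v_1+v_2+v_3$. Translations commute with one another, and the linear part is a conjugacy invariant. Therefore the $\tau$-classes are exactly the $R$-orbits in $V$, namely the $6$ classes in items (3)–(8).

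Altogether this gives $2+6=8$ classes, that is, $7$ nontrivial involutions together with the identity, as listed.
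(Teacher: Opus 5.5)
Your proposal is correct and follows essentially the same route as the paper. Both arguments lift isometries to affine maps $x\mapsto Qx+\lambda/2$ normalizing $\Gamma=\Lambda_{T^3}\rtimes\{\pm I\}$, reduce to $Q\in\{I,R\}$, impose $(I+Q)\lambda\in 2\Lambda_{T^3}$, and read off conjugacy classes from $\tau_\lambda\mapsto\tau_{R\lambda}$ and $\eta_\lambda\mapsto\eta_{\lambda+(I-R)\mu}$. You also justify two points the paper leaves implicit: the lifting step, via simple connectivity of the complement of $\tfrac12\Lambda_{T^3}$, and the pairwise inequivalence of the listed classes, via invariance of the linear part and of $\lambda$ modulo $\langle v_1\rangle$.
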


\begin{proof}
	Write
	\[
	\Gamma=\Lambda_{T^3}\rtimes\{\pm I\},
	\qquad X=\mathbb R^3/\Gamma.
	\]
	Every isometry of $X$ lifts to an affine Euclidean isometry
	$F(x)=Qx+\lambda/2$ normalizing $\Gamma$.  Conjugating the translations in
	$\Gamma$ by $F$ shows that $Q\Lambda_{T^3}=\Lambda_{T^3}$, while
	\[
	F\circ(-I)\circ F^{-1}(x)=-x+\lambda
	\]
	shows that $\lambda\in\Lambda_{T^3}$.  Conversely, these $2$ conditions
	are sufficient for $F$ to descend to $X$, and the class
	$\lambda\in V$ is uniquely determined.
	
	The lifts $(Q,\lambda/2)$ and $(-Q,-\lambda/2)$ induce the same
	isometry of $X$.
	By the genericity assumption, we may therefore take $Q=I$ or $Q=R$.
	Moreover,
	\[
	F^2(x)=Q^2x+\frac{(I+Q)\lambda}{2},
	\]
	so the induced isometry has square equal to the identity precisely when
	\[
	(I+Q)\lambda\in2\Lambda_{T^3}.
	\]
	For $Q=I$ this condition is automatic, giving all $8$ translations
	$\tau_{\lambda}$.  For $Q=R$, reduction modulo
	$2\Lambda_{T^3}$ gives
	\[
	R(v_1)=v_1,\qquad
	R(v_2)=v_1+v_2,\qquad
	R(v_3)=v_3
	\qquad\text{in }V.
	\]
	Thus $(I+R)\lambda=0$ in $V$ if and only if
	$\lambda\in\langle v_1,v_3\rangle_{\mathbb F_2}$,
	which gives the $4$ involutions of the form $\eta_{\lambda}$.
	
	It remains only to determine conjugacy.  Conjugation by
	$\eta_0$ sends $\tau_{\lambda}$ to
	$\tau_{R\lambda}$.  The resulting $6$ orbits in $V$ are
	\[
	\{0\},\ \{v_1\},\ \{v_3\},\ \{v_1+v_3\},\
	\{v_2,v_1+v_2\},\
	\{v_2+v_3,v_1+v_2+v_3\}.
	\]
	For the $R$-type involutions, conjugation by
	$\tau_\mu$ changes the parameter by $(I-R)\mu$.  Since
	$\operatorname{Im}(I-R)=\langle v_1\rangle$ in $V$, their $2$
	conjugacy classes are represented by $\eta_0$ and
	$\eta_{v_3}$.  These are exactly the classes listed above.
	\end{proof}
	
	\begin{theorem}\label{thm:b4-quotient}
		For every $\xi\in\CM(b_4)$, we have $\overline{\Phi}(\xi)=X_\xi/\langle\eta_{v_3}\rangle$, where $X_\xi=T_\xi^3/\IZ_2$ is the corresponding K3 limit.
	
	\end{theorem}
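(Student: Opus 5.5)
The plan is in two parts. First, show that on each K3 limit $X_\xi$ the limiting involution $\sigma_\infty$ is conjugate either to $\eta_0$ or to $\eta_{v_3}$, using the linear action of $\rho$ on the lattice $I_{b_4}$. Second, show that which of the two occurs is constant on $\CM(b_4)$, and decide it at one explicit point.

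\emph{Step 1 (linear part).} By Lemma~\ref{lem:limit_quotient}, $\overline{\Phi}(\xi)=X_\xi/\langle\sigma_\infty\rangle$ for some isometric involution $\sigma_\infty$ of $X_\xi=T^3_\xi/\IZ_2$. By Proposition~\ref{prop:T3_Z2_quotients}, for general $\xi$ this $\sigma_\infty$ is one of $\tau_\lambda$ or $\eta_\lambda$. The lattice $\Lambda_{T^3}$ is read off from $I_{b_4}$ through the periods in \eqref{eq:b4-torus-periods}. Since $\sigma_\infty$ is the limit of the deck involutions, I expect its linear part to be the action of $\rho$ on $I_{b_4}$. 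On the basis $v_1=e_2^+$, $v_2=(e_2^++e_2^-)/2$, $v_3=u^-$ this action is
\[
\rho(v_1)=v_1,\qquad \rho(v_2)=\frac{e_2^+-e_2^-}{2}=v_1-v_2,\qquad \rho(v_3)=-v_3,
\]
which is exactly $R$. The main technical point of this step is to justify that the linear part of the limiting isometry is the induced map on $I_{b_4}$. I would do this through the description of the collapsing limit via the inner products of $\kappa_i,\omega_i$ with $I_{b_4}$, which are equivariant under $\rho$. It follows that $\sigma_\infty=\eta_\lambda$ with $\lambda\in\langle v_1,v_3\rangle_{\mathbb F_2}$. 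By the conjugacy part of Proposition~\ref{prop:T3_Z2_quotients}, only the classes of $\eta_0$ and $\eta_{v_3}$ remain.

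\emph{Step 2 (constancy).} Restricted to $\CM(b_4)$, the map $\overline{\Phi}$ is continuous. As in the proof of Theorem~\ref{thm:continuous-non-collapsing}, the two candidate quotients form a discrete set by Lemma~\ref{lem:discrete_topology}. Since $\CM(b_4)$ is connected and the general locus is dense, the conjugacy class of $\sigma_\infty$ is constant on the general locus. The resulting quotient then extends by continuity to every $\xi$, after checking that the quotients by $\eta_0$ and by $\eta_{v_3}$ remain distinguishable, or simply converge, at non-general points.

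\emph{Step 3 (deciding between $\eta_0$ and $\eta_{v_3}$).} This is the step I expect to be the main obstacle. Fixed-point freeness alone cannot decide it, since the $b_5$ Kummer example shows that limiting involutions may acquire fixed circles. I would first try an intrinsic criterion. The $\eta_0$ quotient of $X_\xi$ has a fixed $2$-torus image (the $+1$-eigenspace of $R$ is $2$-dimensional). By contrast, the translation $v_3/2$ in the $-1$-direction of $R$ moves the relevant fixed locus off itself. One could then argue that a codimension-one fixed set cannot arise as the limit of free involutions whose quotient stays the Ricci limit of smooth orientable-cover Enriques metrics, for instance by comparing the quotient's orientability and the Hausdorff dimension of the singular set with volume-collapsing constraints. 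If that argument is not decisive, the fallback is an explicit model. I would adapt the flat Kummer construction of \S\ref{section-kummer-example}, choosing the shift in $\sigma$ so that the vanishing periods give the flag $W_{b_4}$ (a shift mixing the $\lambda$ and $\mu$ directions, matching the $v_2=(e_2^++e_2^-)/2$ gluing). I would then compute the limiting involution directly, check that its translation part is $v_3/2$, and conclude by Step 2.
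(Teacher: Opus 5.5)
Your Step~1 is a legitimate alternative to the paper's way of excluding the translations. The computation $\rho|_{I_{b_4}}=R$ is correct. Granted a justification that the linear part of $\sigma_\infty$ is the induced action on $I_{b_4}$, it even works at every point, not only at general ones. That justification is plausible via the Gibbons--Hawking picture, where the base coordinates are dual to the hyperk\"ahler triple and $\sigma^*$ acts on the triple by $\mathrm{diag}(1,-1,-1)$. The paper instead rules out the $\tau$-type by letting points of $\CM(b_4)$ approach $\CM(c_{2,4a})$, where the quotient is already known to be a disk, whereas limits of $\tau$-type involutions act on $\IP^1$ preserving orientation.

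The real gap is Step~3, which is the actual content of the theorem: neither of your suggestions decides between $\eta_0$ and $\eta_{v_3}$. The intrinsic criterion is false. Free involutions do converge to involutions with codimension-one fixed sets, and the resulting quotients may have boundary or be non-orientable. On $\CM(a_{2,2})$ the limit involution of $\IP^1$ is a reflection in a circle and $Y_\infty\cong D^2$; on $\CM(a_{2,1})$ one gets $\IR\IP^2$.

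The fallback also cannot work as described, because changing the half-period shift changes only the translation part. In a product model $E_1\times E_2$ the linear part is $\mathrm{diag}(1,1,-1,-1)$ on a $\IZ$-basis of $\Lambda_1\oplus\Lambda_2$. A three-dimensional collapse shrinks a $\sigma$-stable primitive direction, which must lie in $\Lambda_1$ or $\Lambda_2$. So the induced linear part on the limiting lattice is diagonalizable over $\IZ$. By your own Step~1, $R$ is not: it swaps $v_2$ and $v_1-v_2$, and its eigenlattices span only $\langle v_1,2v_2,v_3\rangle$. Hence every such Kummer degeneration lands in $\CM(b_3)$ or $\CM(b_5)$, never in $\CM(b_4)$. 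This is exactly why the paper's Kummer examples do not cover $b_4$.

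The missing idea is to degenerate once more, to an adjacent stratum whose quotient type is already known. Let points of $\CM(b_4)$ approach $\CM(c_{1,4})$ by collapsing the $v_1$-direction; this stratum is the flag $\langle e_2^+\rangle\subset W_{b_4}$ and lies in the closure of $\CM(a_{1,1})$. Modulo $v_1$ one has $Rv_2=v_1-v_2\equiv -v_2$ and $Rv_3=-v_3$, so $R$ induces $-I$ on the limiting $T^2$. Consequently $\eta_0$ degenerates to the identity of $T^2/\langle -1\rangle\cong\IP^1$. By contrast, $\eta_{v_3}$ degenerates to translation by $v_3/2$, a holomorphic involution with two fixed points. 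On $\CM(c_{1,4})$ the involution has the $a_{1,1}$ type (nontrivial, two fixed points), and the type cannot jump in the limit, as in Proposition~\ref{prop:constant_topo_type}. So $\eta_0$ is excluded.

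Separately, your Step~2 needs repair. Lemma~\ref{lem:discrete_topology} concerns a fixed space, so it does not give constancy as $X_\xi$ varies, and you have not shown that the general locus is connected. Argue as the paper does. First, each of $A_{\eta_0}$ and $A_{\eta_{v_3}}$ is closed in all of $\CM(b_4)$, by equivariant convergence of the explicit formulas. Second, the two sets are disjoint because their fixed loci differ: a $2$-torus plus circles for $\eta_0$, a single circle for $\eta_{v_3}$. Connectedness of $\CM(b_4)$ then gives constancy.
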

	
	\begin{proof}
		We divide the proof into $4$ steps.
		
		\smallskip
		\noindent
		1.
		For $\lambda\in\{0,v_3\}$, let $A_{\eta_\lambda}$ be the locus of
		$\xi\in\CM(b_4)$ for which $\overline{\Phi}(\xi)$ is isometric to
		$X_\xi/\langle\eta_\lambda\rangle$, and put
		$A_\tau=\bigcup_\lambda A_{\tau_\lambda}$, where $\lambda$ ranges over
		the $6$ representatives in Proposition~\ref{prop:T3_Z2_quotients}.
		
		Each $A_{\eta_\lambda}$ and $A_{\tau_\lambda}$ is closed. Indeed, if
		$\xi_i$ belongs to one of these loci and $\xi_i\to\xi$, the explicit formulas for
		$\eta_\lambda$ and $\tau_\lambda$ show that the corresponding isometries
		converge equivariantly to the isometry with the same formula on $X_\xi$.
		The continuity of $\overline{\Phi}$ then shows that $\overline{\Phi}(\xi)$ is the corresponding
		quotient.
		
		The general locus is dense in $\CM(b_4)$. Hence
		Proposition~\ref{prop:T3_Z2_quotients} shows that 
		\[
		\CM(b_4)=A_{\eta_0}\cup A_{\eta_{v_3}}\cup A_\tau.
		\]
		In particular, at a nongeneral point the quotient is still isometric to
		one of the $8$ types in Proposition~\ref{prop:T3_Z2_quotients}.
		
		\smallskip
		\noindent
		2.
		Fix $X=T^3/\langle\iota\rangle$, and write
		$x=av_1+bv_2+cv_3$ with $a,b,c\in\mathbb R/\mathbb Z$. A point
		$[x]\in X$ is fixed by the involution induced by
		$F(x)=Qx+\lambda/2$ precisely when
		\[
		F(x)=x\quad\text{or}\quad F(x)=-x\qquad\bmod\Lambda_{T^3}.
		\]
		For $\tau_\lambda$, we have $Q=I$. If $\lambda\ne0$, the first equation
		has no solution, while the second is
		\[
		2x+\frac{\lambda}{2}=0\qquad\bmod\Lambda_{T^3}.
		\]
		It has $8$ solutions in $T^3$, paired by $\iota$, so
		$\operatorname{Fix}_X(\tau_\lambda)$ consists of $4$ isolated regular
		points. For $\lambda=0$, $\tau_0$ is the identity.
		
		For $\eta_\lambda$, we have $Q=R$. Let $q:T^3\to X$ be the quotient map. Since
		$Rx=(a+b)v_1-bv_2-cv_3$, a direct calculation gives
		\[
		\begin{aligned}
		\operatorname{Fix}_X(\eta_0)
		&=q\!\left(\{b=0,\ c\in\{0,\tfrac12\}\}\cup\{2a+b=0\}\right),\\
		\operatorname{Fix}_X(\eta_{v_3})
		&=q\!\left(\{b=0,\ c\in\{\tfrac14,\tfrac34\}\}\right).
		\end{aligned}
		\]
		Here $\{2a+b=0\}$ is a $2$-dimensional subtorus, whereas the other
		components are circles; the $2$ circles for $\eta_{v_3}$ are interchanged
		by $\iota$ and hence give a single circle in $X$. 
		
		Thus the quotients by
		$\eta_0$, $\eta_{v_3}$, and a $\tau$-type involution have different types of fixed points, and hence their quotient spaces are not isometric. Consequently,
		\[
		\CM(b_4)=A_{\eta_0}\sqcup A_{\eta_{v_3}}\sqcup A_\tau.
		\]
		All $3$ sets are closed by step 1. Since $\CM(b_4)$ is connected, it must equal exactly
		one of them.
		
		\smallskip
		\noindent
		3.
		Assume that $\CM(b_4)=A_\tau$, and choose
		$\xi_i\in\CM(b_4)$ converging to a point
		$\xi_\infty\in\CM(c_{2,4a})$ (the same argument applies to
		$\CM(c_{2,4b})$). We know $\Phi(\xi_\infty)$ is a disk in \S~\ref{section-5.3}.
		
		On the other hand, the K3 limits $\overline{\Phi}_{K3}(\xi_i)$ collapse to
		$X_\infty=T^2/\langle-1\rangle\cong\IP^1$. After passing to a subsequence, the
		$\tau$-actions converge to an involution of $X_\infty$ whose linear part,
		modulo $\{\pm I\}$, is the identity. It is therefore orientation
		preserving, so its quotient is homeomorphic to $\IP^1$, a contradiction.
		
		\smallskip
		\noindent
		4.
		It remains to exclude $\eta_0$. Choose points of $\CM(b_4)$ converging to
		$\CM(c_{1,4})$ by collapsing the $v_1$-direction. The linear map $R$
		then induces $-I$ on the limiting $T^2$, so $\eta_0$ induces the identity
		on $T^2/\langle-1\rangle\cong\IP^1$. 
		On the other hand, the quotient type in $\CM(c_{1,4})$ is the same as in
		$\CM(a_{1,1})$ and arises from a
		nontrivial involution with $2$ fixed points, giving
		a contradiction. Hence $\eta_{v_3}$ is the unique type on
		$\CM(b_4)$.
	\end{proof}

\subsection{$\mathbb{Z}_2$ Quotient on the Limit}

Combining Sections~\ref{section-kummer-example}--\ref{section-5.4}, we have proved the following.
\begin{theorem}\label{thm-quotient-type}
	At each boundary point, the Gromov--Hausdorff limit is $Y_\infty=X_\infty/\mathbb Z_2$, where $X_\infty$ is the corresponding K3 limit. Its topology is as follows.
	
	\begin{itemize}
		\item \textbf{$\CM(a_{1,1})$:} $X_\infty=\IP^1$; the involution has $2$ fixed points, and $Y_\infty\cong\IP^1$.
		
		\item \textbf{$\CM(a_{2,1})$:} $X_\infty=\IP^1$; the involution is free, and $Y_\infty\cong\IR\IP^2$.
		
		\item \textbf{$\CM(a_{2,2})$:} $X_\infty=\IP^1$; the fixed locus is a circle, and $Y_\infty$ is a disk.
		
		\item \textbf{$\CM(b_3)$:} $X_\infty=T^3/\IZ_2$; the involution is free, and the quotient is the second three-dimensional Kummer degeneration in Section~\ref{section-kummer-example}.
		
		\item \textbf{$\CM(b_4)$:} $X_\infty=T^3/\IZ_2$; the involution is $\eta_{v_3}$, as proved in Theorem~\ref{thm:b4-quotient}.
		
		\item \textbf{$\CM(b_5)$:} $X_\infty=T^3/\IZ_2$; the fixed locus is the disjoint union of $2$ circles, and the quotient is the first three-dimensional Kummer degeneration in Section~\ref{section-kummer-example}.
		
		\item \textbf{$\CM(b_1)$, $\CM(b_2)$, and $\CM(b_6)$:} The quotient space is the unit segment $I^1$.
		
		\item \textbf{$\CM(c_{i,j})$:} The quotient space is either $I^1$ or $2$-dimensional. Its specific topological type is determined by the adjacent $\CM(a_1)$, $\CM(a_2)$, and $\CM(b)$ boundary components.
		
		\item \textbf{$\CM(d_{i,j})$:} The quotient space is the unit segment $I^1$.
	\end{itemize}
\end{theorem}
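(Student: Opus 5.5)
The plan is to assemble Theorem~\ref{thm-quotient-type} stratum by stratum, using a single mechanism throughout. By Lemma~\ref{lem:limit_quotient}, every GH limit $Y_\infty$ is $X_\infty/\IZ_2$, where $X_\infty=\overline{\Phi}_{K3}(f(\xi))$ is the K3 limit at the image point of the Satake boundary. By Lemma~\ref{lem:discrete_topology}, together with the connectedness in Proposition~\ref{proposition-connectness-satake2}, the isometry type of this quotient is locally constant wherever the conjugacy class of the limiting involution varies continuously. So for each of the $32$ components in Theorem~\ref{thm:isotropic_classes} it is enough to do two things:
\begin{enumerate}
\item identify the dimension of $X_\infty$ from the ambient K3 boundary type $t(A)$;
\item pin down the involution type at one point, or by a closedness-plus-connectedness argument.
\end{enumerate}
The dimension is read off as follows. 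Type $a$ gives $\IP^1$ with a generalized KE metric. Type $b$ and $c$ give $T^3/\IZ_2$ or $\IP^1$ versus $I^1$, according to whether $I_3^\perp/I_3\cong-\Gamma_{16}$ or $(-E_8)^2$. Type $d$ gives $I^1$.

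\textbf{Case $a_{i,j}$.} Here $X_\infty=\IP^1$. Proposition~\ref{prop:constant_topo_type} says the involution type from Lemma~\ref{lem:involution_types} is constant on each connected $\CM(a_{i,j})$. The Kummer degenerations of \S\ref{section-kummer-example} produce a point of each component:
\begin{itemize}
\item $\CM(a_{1,1})$: quotient $\IP^1$, a holomorphic involution with $2$ fixed points;
\item $\CM(a_{2,1})$: quotient $\IR\IP^2$, free;
\item $\CM(a_{2,2})$: quotient a square, i.e.\ a disk, with fixed circle.
\end{itemize}
These fix the type on each component.

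\textbf{Case $b_i$, three-dimensional ($b_3,b_4,b_5$).} For $b_3$ and $b_5$ the Kummer family covers every point of the stratum, so the fixed locus can be computed directly. For $b_3$ the involution is free. For $b_5$ the fixed locus is $S^1\sqcup S^1$. The remaining $b_4$ is Theorem~\ref{thm:b4-quotient}, giving $\eta_{v_3}$.

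\textbf{Case $b_i$, one-dimensional ($b_1,b_2,b_6$), and $d_{i,j}$.} In these cases $X_\infty$ is a segment, so $X_\infty/\IZ_2$ is a segment or a point. Unit diameter rules out a point, and rescaling yields $I^1$. For $b_1$ this uses the lattice identification of $W^\perp/W$. For $b_2$ and $b_6$ one can also use the Kummer examples directly.

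\textbf{Case $c_{i,j}$.} Each $c_{i,j}$ lies in the closure of some $\CM(b_j)$ and some $\CM(a_{i,k})$, and the dimension is decided by $I_{c}^\perp/I_c$ as above. In the $I^1$ case the conclusion is immediate. In the $\IP^1$ case I will approximate from $\CM(a_{\ell,m})$. Using the appendix of \cite{ouyang2025compactificationmetricmodulispace}, the generalized KE metrics converge smoothly away from finitely many points, so the closedness claim in the proof of Proposition~\ref{prop:constant_topo_type} applies. Hence the quotient type agrees with the adjacent $a$-component.

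\textbf{Main obstacle.} The delicate step is the $c$ and $b_4$ analysis: ruling out jumps of the involution type under collapse to lower dimension. I will handle it as in the proofs of Proposition~\ref{prop:constant_topo_type} and Theorem~\ref{thm:b4-quotient}, via equivariant convergence of explicit affine involutions and persistence of fixed-point structure. The argument must also check that the adjacency between $c$, $b$ and $a$ components is correctly read off from the flag inclusions in Theorem~\ref{thm:isotropic_classes}.
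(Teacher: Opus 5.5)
Your proposal is correct and assembles the theorem the same way the paper does. You take the $a_{i,j}$ cases from Proposition~\ref{prop:constant_topo_type} together with the Kummer points of \S\ref{section-kummer-example}; $b_2,b_3,b_5,b_6$ from the Kummer family covering those strata; $b_1$ from $W^\perp/W\cong(-E_8)^{\oplus2}$; $b_4$ from Theorem~\ref{thm:b4-quotient}; and $c_{i,j}$ by approximating from the adjacent $a$-stratum, using smooth convergence of the generalized KE metrics off finitely many points. Your explicit treatment of $d_{i,j}$ just spells out what the paper leaves implicit (its ``$I^1/\IZ_2$ remains $I^1$ after scaling'' remark for $b_1$), with one small correction: an isometric involution of a segment is the identity or the reflection, so the quotient is always a segment and never a point.
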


\begin{proof}[Proof of Corollary~\ref{coro-fix-classes}]
	It suffices to determine which Satake boundary strata meet the closures of
	the $2$ restricted period loci in $\overline{\CM\en}^{\mathrm{Sat}}$.
	Choose a marking and write a period point
	as $(\kappa,[\omega])\in
	O(1,9)/O(9)\times O(2,10)/(O(2)\times O(10))$.
	
	First fix the complex structure, hence fix $[\omega]=[\omega_J]$, and let
	$\CM_J$ denote the locus obtained by varying $\kappa$.  Only the
	$O(1,9)/O(9)$ factor can diverge.  Its rational Satake boundary consists of
	isotropic lines $I^+\subset\Lambda^+_{\mathbb Q}$.  By Theorem~\ref{thm:isotropic_classes}, every such line determines a boundary point
	in $\CM(a_{1,1})$, and hence
	\[
	\partial\overline{\CM_J}^{\mathrm{Sat}}
	\subset \CM(a_{1,1}).
	\]
	
	Let $p:X\to Y$ be the K3 cover, with deck
	involution $\sigma$, and 
	let $F\in H^2(X,\mathbb Z)$ be the cohomology class corresponding to $e_2^+$.  Then $F$ is a primitive nef
	isotropic class in $\operatorname{Pic}(X)$.  By
	\cite[Ch.~2, Prop.~3.10, p.~31]{huybrechts-lecturenotes}, $|F|$ is a
	base-point-free pencil and defines an elliptic fibration
	\[
	\pi_E:X\longrightarrow B\cong\IP^1.
	\]
	
	Since $\sigma^*F=F$, the involution $\sigma$ acts on
	$\IP H^0(X,\mathcal O_X(F))\cong B$ and therefore induces an involution
	$\overline{\sigma}$ of $B$ such that
	\[
	\begin{array}{ccc}
		X & \xrightarrow{\ \sigma\ } & X\\
		{\scriptstyle\pi_E}\downarrow && \downarrow{\scriptstyle\pi_E}\\
		B & \xrightarrow{\ \overline{\sigma}\ } & B
	\end{array}
	\]
	commutes.  Moreover, $\overline{\sigma}$ is nontrivial and hence fixes $2$ points, as in Lemma~\ref{lem:involution_types}.  Indeed, if it
	were the identity, then $\sigma$ would act on a general elliptic
	fiber as a translation. It would
	then preserve both the relative holomorphic $1$-form and the base
	coordinate, and consequently the holomorphic $2$-form of $X$.  This
	contradicts $\sigma^*\omega_X=-\omega_X$.
	
	Quotienting the preceding diagram gives an elliptic fibration
	\[
	Y=X/\langle\sigma\rangle
	\xrightarrow{\ f_E\ } 
	B/\langle\overline{\sigma}\rangle\cong\IP^1.
	\]
	Finally, the results for K3 collapsing \cite{gross-tosatti-zhang-16,hein-tosatti-15} apply $\sigma$-equivariantly to the generalized K\"ahler--Einstein metric $(\IP^1,g_\infty)$ given by the elliptic fibration.
	
	Now fix a polarized K\"ahler class $\kappa\in\Lambda^+$ and let
	$\CM_\kappa$ be the locus obtained by varying $[\omega]$.  The
	$O(1,9)/O(9)$ factor is then fixed. The
	classification in Theorem~\ref{thm:isotropic_classes} gives
	\[
	\partial\overline{\CM_\kappa}^{\mathrm{Sat}}
	\subset
	\CM(a_{2,1})\cup\CM(a_{2,2})
	\cup\CM(d_{1,1})\cup\CM(d_{1,2}).
	\]
	
	Theorem~\ref{thm-quotient-type} gives
	\[
	\CM(a_{2,1})\longrightarrow  \IR\IP^2,\qquad
	\CM(a_{2,2})\longrightarrow D^2,\qquad
	\CM(d_{1,1})\cup\CM(d_{1,2})\longrightarrow I^1.
	\]
	These are exactly the asserted limits for a fixed polarized K\"ahler
	class.
\end{proof}

\bibliography{enrique-compactification}

\end{document}